\newcommand{\N}{\mathbb{N}}
\newcommand{\R}{\mathbb{R}}
\newcommand{\dHaus}{\, \mathrm{d} \mathcal{H}^{d-1} \,}
\newcommand{\dd}{\, \mathrm{d} \,}
\newcommand{\dx}{\, \mathrm{d}x\,}
\newcommand{\dt}{\, \mathrm{d}t\,}
\newcommand{\dr}{\, \mathrm{d}r\,}
\newcommand{\pd}{\partial}
\newcommand{\eps}{\varepsilon}
\newcommand{\abs}[1]{\left| #1 \right|}
\newcommand{\norm}[1]{\| #1 \|}
\newcommand{\bignorm}[1]{\left\| #1 \right\|}
\newcommand{\inner}[2]{\langle #1 , #2 \rangle}
\newcommand{\Laplace}{\Delta}
\newcommand{\mean}[1]{\overline{#1}}
\renewcommand{\div}{\, \mathrm{div}\,}
\newtheorem{thm}{Theorem}[section]
\newtheorem{lemma}[thm]{Lemma}
\newtheorem{remark}{Remark}[section]
\newtheorem{defn}{Definition}[section]
\newtheorem{assump}{Assumption}[section]
\numberwithin{equation}{section}
\begin{document}

\title{Well-posedness of a Cahn--Hilliard system modelling tumour growth with chemotaxis and active transport}

\author{Harald Garcke \footnotemark[1] \and Kei Fong Lam \footnotemark[1]}

\date{\today}

\maketitle

\renewcommand{\thefootnote}{\fnsymbol{footnote}}
\footnotetext[1]{Fakult\"at f\"ur Mathematik, Universit\"at Regensburg, 93040 Regensburg, Germany
({\tt \{Harald.Garcke, Kei-Fong.Lam\}@mathematik.uni-regensburg.de}).}

\begin{abstract}
We consider a diffuse interface model for tumour growth consisting of a Cahn--Hilliard equation with source terms coupled to a reaction-diffusion equation.  The coupled system of partial differential equations models a tumour growing in the presence of a nutrient species and surrounded by healthy tissue.  The model also takes into account transport mechanisms such as chemotaxis and active transport.  We establish well-posedness results for the tumour model and a variant with a quasi-static nutrient.  It will turn out that the presence of the source terms in the Cahn--Hilliard equation leads to new difficulties when one aims to derive a priori estimates.  However, we are able to prove continuous dependence on initial and boundary data for the chemical potential and for the order parameter in strong norms.
\end{abstract}

\noindent \textbf{Key words. } Tumour growth; phase field model; Cahn--Hilliard equation; reaction-diffusion equations; chemotaxis; weak solutions; well-posedness. \\

\noindent \textbf{AMS subject classification. } 35K50, 35Q92, 35K57, 92B05.

\section{Introduction}\label{sec:Intro}
Several new diffuse interface models for tumour growth have been introduced recently in \cite{article:GarckeLamSitkaStyles}.  Amongst them is a Cahn--Hilliard equation coupled with a reaction-diffusion equation for a nutrient species.  The model equations are given as
\begin{subequations}\label{Intro:CHNutrient}
\begin{alignat}{2}
\pd_{t}\varphi  &= \div (m(\varphi) \nabla \mu) + (\lambda_{p} \sigma - \lambda_{a}) h(\varphi) && \text{ in } \Omega \times (0,T), \label{Intro:varphi} \\
\mu & = A\Psi'(\varphi) - B \Laplace \varphi - \chi_{\varphi} \sigma && \text{ in } \Omega \times (0,T), \label{Intro:mu} \\
\pd_{t}\sigma  & = \div (n(\varphi) (\chi_{\sigma} \nabla \sigma - \chi_{\varphi} \nabla \varphi)) -\lambda_{c} \sigma h(\varphi) && \text{ in } \Omega \times (0,T),   \label{Intro:sigma} \\
0 & = \nabla \varphi \cdot \nu = \nabla \mu \cdot \nu && \text{ on } \Gamma \times (0,T), \\
n(\varphi) \chi_{\sigma} \nabla \sigma \cdot \nu & = K (\sigma_{\infty} - \sigma) && \text{ on } \Gamma \times (0,T).
\end{alignat}
\end{subequations}
Here, $\Omega \subset \R^{d}$ is a bounded domain with boundary $\Gamma := \pd \Omega$, $\sigma$ denotes the concentration of an unspecified chemical species that serves as a nutrient for the tumour, $\varphi \in [-1,1]$ denotes the difference in volume fractions, with $\{\varphi = 1\}$ representing unmixed tumour tissue, and $\{\varphi = -1\}$ representing the surrounding healthy tissue, and $\mu$ denotes the chemical potential for $\varphi$.

In the system \eqref{Intro:CHNutrient}, $A$, $B$, and $K$ denote positive constants, $m(\varphi)$ and $n(\varphi)$ are positive mobilities for $\varphi$ and $\sigma$, respectively, $\Psi(\cdot)$ is a potential with two equal minima at $\pm 1$, $\sigma_{\infty}$ denotes a nutrient supply on the boundary $\Gamma$, and $h(\varphi)$ is an interpolation function with $h(-1) = 0$ and $h(1) = 1$.  The simplest example is $h(\varphi) = \frac{1}{2}(1+\varphi)$.

The non-negative constants $\lambda_{p}$, $\lambda_{a}$ represent the proliferation rate and the apoptosis rate of the tumour cells, respectively, and the non-negative constant $\lambda_{c}$ represents the consumption rate of the nutrient.  Here we note that these are only active in the tumour regions, and the healthy tissue does not proliferate, or consume nutrient or undergo apoptosis.

We denote $\chi_{\sigma} > 0$ as the diffusivity of the nutrient, and $\chi_{\varphi} \geq 0$ can be seen as a parameter for transport mechanisms such as chemotaxis and active uptake.  To see this, we note that in \eqref{Intro:varphi} and \eqref{Intro:sigma}, the fluxes for $\varphi$ and $\sigma$ are given by
\begin{align*}
\bm{q}_{\varphi} & := -m(\varphi) \nabla \mu = -m(\varphi) \nabla (A \Psi'(\varphi) - B \Laplace \varphi - \chi_{\varphi} \sigma), \\
\bm{q}_{\sigma} & := -n(\varphi)\nabla (\chi_{\sigma} \sigma - \chi_{\varphi}  \varphi),
\end{align*}
respectively.  The term $m(\varphi) \nabla (\chi_{\varphi} \sigma)$ in $\bm{q}_{\varphi}$ drives the cells in the direction of increasing $\sigma$, i.e., towards regions of high nutrient, and thus it models the chemotactic response towards the nutrient.  Meanwhile, the term $n(\varphi) \nabla (\chi_{\varphi} \varphi)$ in $\bm{q}_{\sigma}$ drives the nutrient to regions of high $\varphi$, i.e., to the tumour cells, which indicates that the nutrient is moving towards the tumour cells.  Note that $\nabla \varphi$ is non-zero only in the vicinity of the interface between the tumour cells and the healthy tissue, and thus this term only contributes significantly near the tumour interface.  In \cite{article:GarckeLamSitkaStyles}, the authors interpreted this term as the mechanisms that actively transport nutrient into the tumour colony, and establish a persistent nutrient concentration difference between the different cell compartments even against the nutrient concentration gradient.  The term ``active transport'' is used in the biological sense that some kind of mechanism is required to maintain the transport, which is in contrast to passive transport processes such as diffusion driven only by the concentration gradient.

We note that in \eqref{Intro:CHNutrient}, the mechanism of chemotaxis and active transport are connected via the parameter $\chi_{\varphi}$.  To ``decouple'' the two mechanisms, we introduce the following choice for the mobility $n(\varphi)$ and diffusion coefficient $\chi_{\sigma}$.  For a positive constant $\eta > 0$ and a positive mobility $D(\varphi)$, consider
\begin{align}\label{activetransportchoice}
n(\varphi) = \eta D(\varphi) \chi_{\varphi}^{-1}, \quad \chi_{\sigma} = \eta^{-1} \chi_{\varphi}.
\end{align}
Then, the corresponding fluxes for $\varphi$ and $\sigma$ are now given as
\begin{equation}\label{activetransportflux}
\begin{aligned}
\bm{q}_{\varphi} & := -m(\varphi) \nabla (A \Psi'(\varphi) - B \Laplace \varphi - \chi_{\varphi} \sigma), \\
\bm{q}_{\sigma} & := -D(\varphi)\nabla (\sigma - \eta \varphi),
\end{aligned}
\end{equation}
where the parameter $\chi_{\varphi}$ controls the effects of chemotaxis, and the parameter $\eta$ controls the effects of active transport.

We introduce the free energy $N$ for the nutrient as
\begin{align}\label{choice:Nvarphisigma}
N(\varphi, \sigma) = \frac{\chi_{\sigma}}{2} \abs{\sigma}^{2} + \chi_{\varphi} \sigma(1-\varphi),
\end{align}
and its partial derivatives with respect to $\sigma$ and $\varphi$ are given as
\begin{align}\label{choice:Npartialderi}
N_{,\sigma} = \chi_{\sigma} \sigma + \chi_{\varphi}(1-\varphi), \quad N_{,\varphi} = - \chi_{\varphi} \sigma.
\end{align}
Note that, by the boundary condition $\nabla \varphi \cdot \nu = 0$ on $\Gamma$, and the definition of $N_{,\sigma}$ \eqref{choice:Npartialderi}, we have
\begin{align*}
\nabla N_{,\sigma} \cdot \nu = \chi_{\sigma} \nabla \sigma \cdot \nu - \chi_{\varphi} \nabla \varphi \cdot \nu = \chi_{\sigma} \nabla \sigma \cdot \nu \text{ on } \Gamma.
\end{align*}
Thus, by testing \eqref{Intro:sigma} with $N_{,\sigma}$, \eqref{Intro:mu} with $\pd_{t}\varphi$, \eqref{Intro:varphi} with $\mu$, and summing the resulting equations, one can show the following formal energy identity is satisfied,
\begin{equation}\label{Intro:energyid}
\begin{aligned}
& \frac{\dd}{\dt} \int_{\Omega}  \left [ A \Psi(\varphi) + \frac{B}{2} \abs{\nabla \varphi}^{2} + \frac{\chi_{\sigma}}{2} \abs{\sigma}^{2} + \chi_{\varphi} \sigma(1-\varphi) \right ] \dx \\
& \quad + \int_{\Omega} m(\varphi) \abs{\nabla \mu}^{2} + n(\varphi) \abs{\nabla N_{,\sigma}}^{2} \dx + \int_{\Gamma} K N_{,\sigma}(\sigma - \sigma_{\infty}) \dHaus \\
& \quad + \int_{\Omega} - \mu (\lambda_{p} \sigma - \lambda_{a}) h(\varphi) + \lambda_{c} \sigma h(\varphi) N_{,\sigma} \dx = 0,
\end{aligned}
\end{equation}
where $\mathcal{H}^{d-1}$ is the $(d-1)$-dimensional Hausdorff measure.  To derive useful a priori estimates from \eqref{Intro:energyid} we face a number of obstacles:
\begin{enumerate}
\item the presence of source terms $\mu h(\varphi)(\lambda_{a} - \lambda_{p} \sigma) + N_{,\sigma} \lambda_{c} \sigma h(\varphi)$ deprives \eqref{Intro:energyid} of a Lyapunov structure, i.e., an inequality of the form $\frac{\dd}{\dt} V \leq \alpha V$, for $\alpha \geq 0$ and a suitable function $V$;
\item the term $\sigma(1-\varphi)$ in the nutrient free energy $N(\varphi, \sigma)$ can have a negative sign;
\item the presence of triple products $\mu \sigma h(\varphi)$ and $\sigma h(\varphi) N_{,\sigma}$.
\end{enumerate}
One way to control the triple products with the usual $H^{1}$-regularity expected from $\sigma$, $\varphi$ and $\mu$ is to assume that $h(\cdot)$ is bounded.  The simplest choice is
\begin{align*}
h(\varphi) = \max \left ( 0, \min \left ( \frac{1}{2}(\varphi + 1) , 1 \right ) \right ),
\end{align*}
which ensures $h(-1) = 0$ and $h(1) = 1$ as requested.  By considering the bounded functions $h(\cdot)$, we can control the source terms $\mu h(\varphi)(\lambda_{a} - \lambda_{p} \sigma) + N_{,\sigma} \lambda_{c} \sigma h(\varphi)$ in \eqref{Intro:energyid}, and thus applications of H\"{o}lder's inequality and Young's inequality will lead to (see \eqref{apriori:totaltimeenergymean:sim} below)
\begin{equation}\label{Intro:energy:apriori}
\begin{aligned}
& \frac{\dd}{\dt} \int_{\Omega} \left [ A \Psi(\varphi) + \frac{B}{2} \abs{\nabla \varphi}^{2} + \frac{\chi_{\sigma}}{2} \abs{\sigma}^{2} + \chi_{\varphi} \sigma (1-\varphi) \right ] \dx \\
& \quad +  k_{1} \left ( \norm{\nabla \mu}_{L^{2}(\Omega)}^{2} + \norm{\nabla N_{,\sigma}}_{L^{2}(\Omega)}^{2} + \norm{\sigma}_{L^{2}(\Gamma)}^{2} \right ) \\
& \quad  - k_{2} \norm{\sigma}_{L^{2}(\Omega)}^{2} - k_{3} \norm{\varphi}_{L^{2}(\Omega)}^{2} - k_{4} \norm{\nabla \varphi}_{L^{2}(\Omega)}^{2} \leq C,
\end{aligned}
\end{equation}
for some positive constants $k_{1}, k_{2}, k_{3}, k_{4}$ and $C$.  The sign indefiniteness of the term $\chi_{\varphi} \sigma (1-\varphi)$ means that we have to first integrate \eqref{Intro:energy:apriori} in time and then estimate with H\"{o}lder's inequality and Young's inequality.  Thus, we obtain
\begin{equation}
\label{Intro:energy:apriori:integrated}
\begin{aligned}
&  A \norm{\Psi(\varphi)}_{L^{1}(\Omega)} + \frac{B}{2} \norm{\nabla \varphi}_{L^{2}(\Omega)}^{2} + k_{5} \norm{\sigma}_{L^{2}(\Omega)}^{2} - k_{6} \norm{\varphi}_{L^{2}(\Omega)}^{2} \\
& \quad +   k_{1} \int_{0}^{T} \left (\norm{\nabla \mu}_{L^{2}(\Omega)}^{2} + \norm{\nabla N_{,\sigma}}_{L^{2}(\Omega)}^{2} + \norm{\sigma}_{L^{2}(\Gamma)}^{2} \right ) \dt \\
& \quad - k_{2} \norm{\sigma}_{L^{2}(0,T;L^{2}(\Omega))}^{2} - k_{3} \norm{\varphi}_{L^{2}(0,T;L^{2}(\Omega))}^{2} - k_{4} \norm{\nabla \varphi}_{L^{2}(0,T;L^{2}(\Omega))}^{2} \leq C,
\end{aligned}
\end{equation}
for some positive constants $k_{5}$, $k_{6}$ and $C$.  A structural assumption \eqref{assump:Psi} on the potential $\Psi$  will allow us to control $\norm{\varphi}_{L^{2}(\Omega)}^{2}$ with $\norm{\Psi}_{L^{1}(\Omega)}$ (see \eqref{varphiL2PsiL1} below).  This will lead to
\begin{equation}
\label{Intro:energy:apriori:Gronwall}
\begin{aligned}
& (A - k_{7}) \norm{\Psi(\varphi)}_{L^{1}(\Omega)} + \frac{B}{2} \norm{\nabla \varphi}_{L^{2}(\Omega)}^{2} + k_{5} \norm{\sigma}_{L^{2}(\Omega)}^{2} \\
& \quad +  k_{1} \int_{0}^{T} \left ( \norm{\nabla \mu}_{L^{2}(\Omega)}^{2} + \norm{\nabla N_{,\sigma}}_{L^{2}(\Omega)}^{2} + \norm{\sigma}_{L^{2}(\Gamma)}^{2} \right ) \dt \\
& \quad - k_{2} \norm{\sigma}_{L^{2}(0,T;L^{2}(\Omega))}^{2} - k_{8} \norm{\Psi(\varphi)}_{L^{1}(0,T;L^{1}(\Omega))} - k_{4} \norm{\nabla \varphi}_{L^{2}(0,T;L^{2}(\Omega))}^{2} \leq C,
\end{aligned}
\end{equation}
for some positive constants $k_{7}$, $k_{8}$ and $C$.  To apply the integral version of Gronwall's inequality, we have to assume that the constant $A$ satisfies $A > k_{7}$.  This is needed in order to derive the usual a priori bounds for $\varphi$ and $\mu$ in Cahn--Hilliard systems with source terms.  However, we point out that the constant $A$ is often chosen to be $A := \frac{\gamma}{\eps}$, where $\gamma > 0$ denotes the surface tension and $\eps > 0$ is a small parameter related to the interfacial thickness.  For sufficiently small values of $\eps$ or sufficiently large surface tension $\gamma$, we see that $A > k_{7}$ will be satisfied, and thus it is not an unreasonable constraint.

Let us consider the nutrient equation \eqref{Intro:sigma} with the specific choice of fluxes \eqref{activetransportchoice}, leading to
\begin{align*}
\pd_{t} \sigma = \div (D(\varphi) \nabla \sigma) - \eta \div (D(\varphi) \nabla \varphi) - \lambda_{c} \sigma h(\varphi).
\end{align*} 
Performing a non-dimensionalisation leads to the following non-dimensionalised nutrient equation (here we reuse the same notation to denote the non-dimensionalised variables)
\begin{align}\label{nondim:sigma}
\kappa \pd_{t} \sigma = \Laplace \sigma - \theta \Laplace \varphi - \alpha \sigma h(\varphi),
\end{align}
where $\kappa > 0$ represents the ratio between the nutrient diffusion timescale and the tumour doubling timescale, $\theta > 0$ represents the ratio between the nutrient diffusion time-scale and the active transport timescale, and $\alpha > 0$ represents the ratio between the nutrient diffusion timescale and the nutrient consumption timescale.  

In practice, experimental values indicate that $\kappa \ll 1$ (see for example \cite[Section 4.3.2]{incoll:ByrneCancerbook}) and we assume that the timescales of nutrient active transport and nutrient consumption are of the same order as the timescale of nutrient diffusion, i.e., $\theta \sim \mathcal{O}(1)$, $\alpha \sim \mathcal{O}(1)$.  This leads to the following quasi-static model,
\begin{subequations}\label{Intro:CHN:quasi}
\begin{alignat}{2}
\pd_{t}\varphi  &= \div (m(\varphi) \nabla \mu) + (\lambda_{p} \sigma - \lambda_{a}) h(\varphi) && \text{ in } \Omega \times (0,T), \label{Intro:quasi:varphi} \\
\mu & = A\Psi'(\varphi) - B \Laplace \varphi - \chi_{\varphi} \sigma && \text{ in } \Omega \times (0,T), \label{Intro:quasi:mu} \\
0 & = \div (D(\varphi) \nabla \sigma) - \eta \div(D(\varphi) \nabla \varphi) -\lambda_{c} \sigma h(\varphi) && \text{ in } \Omega \times (0,T),   \label{Intro:quasi:sigma} \\
0 & = \nabla \varphi \cdot \nu = \nabla \mu \cdot \nu && \text{ on } \Gamma \times (0,T), \\
D(\varphi) \nabla \sigma \cdot \nu & = K (\sigma_{\infty} - \sigma) && \text{ on } \Gamma \times (0,T). \label{Intro:quasi:sigma:boundary}
\end{alignat}
\end{subequations}
Note that the loss of the time derivative $\pd_{t} \sigma$ implies that an energy identity for \eqref{Intro:CHN:quasi} cannot be derived in a similar fashion to \eqref{Intro:energyid}.  However, if we test \eqref{Intro:quasi:mu} with $\pd_{t}\varphi$, \eqref{Intro:quasi:varphi} with $\chi_{\varphi} \sigma + \mu$, \eqref{Intro:quasi:sigma} with $\sigma$ and add the resulting equations, we formally obtain
\begin{equation}\label{Intro:Quasi:EnergyID}
\begin{aligned}
& \frac{\dd}{\dt} \int_{\Omega} \left [ A \Psi(\varphi) + \frac{B}{2} \abs{\nabla \varphi}^{2} \right ] \dx \\
& \quad + \int_{\Omega} m(\varphi) \abs{\nabla \mu}^{2} + D(\varphi) \abs{\nabla \sigma}^{2} + \lambda_{c} h(\varphi) \abs{\sigma}^{2} \dx + \int_{\Gamma} K \abs{\sigma}^{2} \dHaus \\
& \quad = \int_{\Omega} - m(\varphi) \chi_{\varphi} \nabla \mu \cdot \nabla \sigma  + D(\varphi) \eta \nabla \varphi \cdot \nabla \sigma \dx + \int_{\Omega} (\lambda_{p} \sigma - \lambda_{a}) h(\varphi)(\chi_{\varphi} \sigma + \mu) \dx  \\
& \quad + \int_{\Gamma}  K  \sigma \sigma_{\infty} \dHaus.
\end{aligned}
\end{equation}
Here, we point out that there are no terms with indefinite sign under the time derivative, and so we expect that there will not be a restriction on the constant $A$ as in the model \eqref{Intro:CHNutrient}.  In principle, we can also perform the same testing procedure to \eqref{Intro:varphi}, \eqref{Intro:mu}, and \eqref{nondim:sigma} to obtain a similar identity to \eqref{Intro:Quasi:EnergyID} with an additional term $\frac{\dd}{\dt} \frac{\kappa}{2} \norm{\sigma}_{L^{2}(\Omega)}^{2}$ on the left-hand side.  However, the a priori estimates obtain from a Gronwall argument will not be uniform in $\kappa$, which is due to the fact that the source terms involving $\sigma$ on the right-hand side cannot be bounded any longer with the help of $\frac{\kappa}{2} \norm{\sigma}_{L^{2}(\Omega)}^{2}$ on the left-hand side.  

Thus in this work, we cannot realize \eqref{Intro:CHN:quasi} as a limit system from \eqref{Intro:varphi}, \eqref{Intro:mu}, and \eqref{nondim:sigma} as $\kappa \to 0$, and the well-posedness of \eqref{Intro:CHN:quasi} will be proved separately.  However, if we supplement \eqref{Intro:varphi}, \eqref{Intro:mu}, and \eqref{nondim:sigma} with Dirichlet boundary conditions, then we can rigorously establish the quasi-static system \eqref{Intro:CHN:quasi} as a limit system of \eqref{Intro:varphi}, \eqref{Intro:mu}, and \eqref{nondim:sigma} as $\kappa \to 0$.  For more details, we refer to \cite{article:GarckeLamDirichlet}.

We now compare \eqref{Intro:CHNutrient} with the other models for tumour growth studied in the literature.  In \cite{article:HawkinsZeeOden12}, the authors derived the following model,
\begin{subequations}\label{ModelHawkins}
\begin{align}
\pd_{t}\varphi  &= \div (m(\varphi) \nabla \mu) + P(\varphi)(\chi_{\sigma} \sigma + \chi_{\varphi}(1- \varphi)  - \mu), \label{Model:Hawkins:varphi} \\
\mu & = A\Psi'(\varphi) - B \Laplace \varphi - \chi_{\varphi} \sigma, \label{Model:Hawkins:mu} \\
\pd_{t}\sigma  & = \div (n(\varphi) (\chi_{\sigma} \nabla \sigma - \chi_{\varphi} \nabla \varphi)) -P(\varphi)(\chi_{\sigma} \sigma + \chi_{\varphi}(1- \varphi) - \mu), \label{Model:Hawkins:sigma}
\end{align}
\end{subequations}
where we see that the chemical potentials $N_{,\sigma}$ and $\mu$ enter as source terms in \eqref{Model:Hawkins:varphi} and \eqref{Model:Hawkins:sigma}, and $P(\varphi)$ is a non-negative function. Subsequently, if we consider
\begin{align*}
\chi_{\sigma} = 1, \quad \chi_{\varphi} = 0, \quad n(\varphi) = m(\varphi) = 1
\end{align*} 
in \eqref{ModelHawkins}, then we obtain
\begin{subequations}\label{ModelRocca}
\begin{align}
\pd_{t}\varphi &= \Laplace \mu + P(\varphi)(\sigma - \mu), \label{Model:Rocca:varphi} \\
\mu & = A\Psi'(\varphi) - B \Laplace \varphi, \label{Model:Rocca:mu} \\
\pd_{t}\sigma & = \Laplace \sigma -P(\varphi)(\sigma - \mu). \label{Model:Rocca:sigma}
\end{align}
\end{subequations}
Furnishing \eqref{ModelRocca} with homogeneous Neumann boundary conditions, the well-posedness of the system and the existence of the global attractor have been proved in \cite{article:FrigeriGrasselliRocca15} for large classes of nonlinearities $\Psi$ and $P$.

The corresponding viscosity regularised version of \eqref{ModelRocca} (where there is an extra $\alpha \pd_{t}\mu$ term on the left-hand side of \eqref{Model:Rocca:varphi} and an extra $\alpha \pd_{t}\varphi$ term on the right-hand side of \eqref{Model:Rocca:mu} for positive constant $\alpha$) has been studied in \cite{article:ColliGilardiHilhorst15}, where well-posedness is proved for a general class of potentials $\Psi$, and for a Lipschitz and globally bounded $P$.  The asymptotic behaviour as $\alpha \to 0$ is shown under more restrictions on $\Psi$ (polynomial growth of order 4) and the authors proved that a sequence of weak solutions to the viscosity regularised system converges to the weak solution of \eqref{ModelRocca}.  Further investigation in obtaining convergence rates with singular potentials have been initiated in \cite{article:ColliGilardiRoccaSprekelsVV,article:ColliGilardiRoccaSprekelsAA}, and the corresponding sharp interface limit is obtained via a formally matched asymptotic analysis performed in \cite{article:Kampmann}.

For \eqref{ModelRocca}, there is a natural Lyapunov-type energy equality given as
\begin{equation}\label{Rocca:energy}
\begin{aligned}
& \frac{\dd}{\dt} \int_{\Omega} \left [ A \Psi(\varphi) + \frac{B}{2} \abs{\nabla \varphi}^{2} + \frac{1}{2} \abs{\sigma}^{2} \right ] \dx \\
& \quad +  \norm{\nabla \mu}_{L^{2}(\Omega)}^{2} + \norm{\nabla \sigma}_{L^{2}(\Omega)}^{2} + \int_{\Omega} P(\varphi)(\sigma - \mu)^{2} \dx = 0.
\end{aligned}
\end{equation}
Since all the terms are non-negative, the standard a priori estimates can be obtained even in the case where $\Psi$ has polynomial growth of order $6$ in three dimensions.  In contrast, for \eqref{Intro:CHNutrient} we have to assume that the derivative $\Psi'$ has linear growth, thus restricting our class of potentials to those with at most quadratic growth (see Section \ref{sec:discussion} below).

The quasi-static model \eqref{Intro:CHN:quasi} bears the most resemblance to \cite[Equations (68)-(70)]{article:CristiniLiLowengrubWise09} when the active transport is neglected (i.e., $\eta = 0$).  We note that the focus of study seems to be the linear stability of radial solutions to the resulting sharp interface limit  when we set $A = \frac{1}{\eps}$ and $B = \eps$, and send $\eps \to 0$.  To the best of our knowledge, there are no results concerning the well-posedness of \eqref{Intro:CHN:quasi}.

We also mention another class of models that describes tumour growth using a Cahn--Hilliard--Darcy system,
\begin{subequations}
\label{Model:LowengrubTitiZhao}
\begin{align}
\div \bm{v} & = \mathcal{S}, \\
\bm{v} &= -M(\nabla p + \mu \nabla \varphi), \label{LTZ:velo} \\
\pd_{t} \varphi + \div (\bm{v} \varphi) & = \nabla \cdot (m(\varphi) \nabla \mu) + \mathcal{S}, \\
\mu & = A\Psi'(\varphi) - B \Laplace \varphi,
\end{align}
\end{subequations}
where $\bm{v}$ denote a mixture velocity, $p$ denotes the pressure, $M$ is the permeability, and $\mathcal{S}$ denotes a mass exchange term.  For the case where $\mathcal{S} = 0$ and $M = 1$, the existence of strong solutions in 2D and 3D have been studied in \cite{article:LowengrubTitiZhao13}.  The global existence of weak solutions in two and three dimensions via the convergence of a fully discrete and energy stable implicit finite element scheme is established in \cite{article:FengWise12}, and uniqueness of weak solutions can be shown if additional regularity assumptions on the solutions are imposed.  For the case where $\mathcal{S} \neq 0$ is prescribed and $M = 1$, existence of global weak solutions in 2D and 3D, and unique local strong solutions in 2D can be found in \cite{article:JiangWuZheng14}.  A related system, known as the Cahn--Hilliard--Brinkman system, where an additional viscosity term is added to the left-hand side of the velocity equation \eqref{LTZ:velo} and the mass exchange $\mathcal{S}$ is set to zero, has been the subject of study in \cite{preprint:BosiaContiGrasselli14}.  Meanwhile, in the case $\mathcal{S} = 0$ and $M$ is a function depending on $\varphi$, the system \eqref{Model:LowengrubTitiZhao} is also referred to as the Hele--Shaw--Cahn--Hilliard model (see \cite{article:LeeLowengrubGoodman01,article:LeeLowengrubGoodman01:Part2}).  In this setting, $M$ is the reciprocal of the viscosity of the fluid mixture, and we refer to  \cite{article:WangZhang} concerning strong well-posedness globally in time for two dimensions and locally in time for three dimensions when $\Omega$ is the $d$-dimensional torus.  Long-time behaviour of solutions to the Hele--Shaw--Cahn--Hilliard model is studied in \cite{article:WangWu}.  

The structure of this paper is as follows.  In Section \ref{sec:mainresults}, we state the assumptions and the well-posedness results for \eqref{Intro:CHNutrient} and \eqref{Intro:CHN:quasi}.  In Section \ref{sec:apriori} we derive some useful estimates, and in Section \ref{sec:Existence}, we prove the existence of weak solutions to \eqref{Intro:CHNutrient} via a Galerkin procedure.  Continuous dependence on initial and boundary data for \eqref{Intro:CHNutrient} is shown in Section \ref{sec:Uniqueness}.  In Section \ref{sec:quasi}, we outline the proof of well-posedness for \eqref{Intro:CHN:quasi}, and in Section \ref{sec:discussion} we discuss the issue of the growth assumptions for the potential.

\section{Main results}\label{sec:mainresults}

\paragraph{Notation and useful preliminaries.}
For convenience, we will often use the notation $L^{p} := L^{p}(\Omega)$ and $W^{k,p} := W^{k,p}(\Omega)$ for any $p \in [1,\infty]$, $k > 0$ to denote the standard Lebesgue spaces and Sobolev spaces equipped with the norms $\norm{\cdot}_{L^{p}}$ and $\norm{\cdot}_{W^{k,p}}$.  Moreover, the dual space of a Banach space $X$ will be denoted by $X^{*}$.  In the case $p = 2$, we use $H^{k} := W^{k,2}$ with the norm $\norm{\cdot}_{H^{k}}$.

For any $d \in \N$, let $\Omega \subset \R^{d}$ denote a bounded domain with Lipschitz boundary $\Gamma$, and let $T > 0$.  We recall the Poincar\'{e} inequalities (see for instance \cite[Equations (1.35), (1.37a) and (1.37c)]{book:Temam}): There exists a positive constant $C_{\mathrm{P}}$, depending only on $\Omega$ and the dimension $d$, such that for all $f \in H^{1}$,
\begin{align}
\bignorm{f - \overline{f}}_{L^{2}} & \leq C_{\mathrm{P}} \norm{\nabla f}_{L^{2}}, \label{regular:Poincare} \\
\norm{f}_{L^{2}} & \leq C_{\mathrm{P}} \left (\norm{\nabla f}_{L^{2}} + \norm{f}_{L^{2}(\Gamma)} \right ), \label{boundary:Poincare}
\end{align}
where $\mean{f} := \frac{1}{\abs{\Omega}} \int_{\Omega} f \dx$ denotes the mean of $f$.

\begin{assump}\label{assump:main}
\
\begin{enumerate}[label=$(\mathrm{A \arabic*})$, ref = $(\mathrm{A \arabic*})$] 
\item $\lambda_{p}$, $\lambda_{a}$, $\lambda_{c}$ and $\chi_{\varphi}$ are fixed non-negative constants, while $\chi_{\sigma}$, $A$, $B$ and $K$ are fixed positive constants.
\item The initial and boundary data satisfy 
\begin{align*}
\varphi_{0} \in H^{1}, \quad \sigma_{0} \in L^{2}, \quad \sigma_{\infty} \in L^{2}(0,T;L^{2}(\Gamma)).
\end{align*}
\item The functions $m$, $n$, $h$ and $D$ belong to the space $C^{0}(\R)$, and there exist positive constants $h_{\infty}$, $m_{0}$, $m_{1}$, $D_{0}$, $D_{1}$, $n_{0}$ and $n_{1}$, such that for all $t \in \R$,
\begin{align}
\label{assump:mn}
m_{0} \leq m(t) \leq m_{1}, \quad n_{0} \leq n(t) \leq n_{1}, \quad D_{0} \leq D(t) \leq D_{1}, \quad 0 \leq h(t) \leq h_{\infty}.
\end{align}
\item The potential $\Psi \in C^{1,1}(\R)$ is non-negative, continuously differentiable, with globally Lipschitz derivative and satisfies 
\begin{align}\label{assump:Psi}
\Psi(t) \geq R_{1} \abs{t}^{2} - R_{2}  , \quad \abs{\Psi'(t)} \leq R_{3} (1 + \abs{t}),
\end{align}
for positive constants $R_{2}$, $R_{3}$ and a positive constant $R_{1}$ such that
\begin{align}\label{assump:constantsrelations}
 A > \frac{2 \chi_{\varphi}^{2}}{\chi_{\sigma} R_{1}}.
\end{align}
\end{enumerate}
\end{assump}

\begin{defn}\label{defn:weaksolution}
We call a triplet of functions $(\varphi, \mu, \sigma)$ a weak solution to \eqref{Intro:CHNutrient} if
\begin{align*}
\sigma, \varphi \in H^{1}(0,T;(H^{1})^{*}) \cap L^{2}(0,T;H^{1}), \quad \mu  \in L^{2}(0,T;H^{1}),
\end{align*}
with $\varphi(0) = \varphi_{0}$, $\sigma(0) = \sigma_{0}$ and satisfy for $\zeta, \phi, \xi \in H^{1}$ and a.e. $t \in (0,T)$, 
\begin{subequations}\label{CHNutrient:truncated:weakform}
\begin{align}
\inner{\pd_{t} \varphi}{\zeta} & = \int_{\Omega} - m(\varphi) \nabla \mu \cdot \nabla \zeta + (\lambda_{p} \sigma - \lambda_{a}) h(\varphi) \zeta \dx, \label{Truncated:varphi:weak} \\
\int_{\Omega} \mu \phi \dx & = \int_{\Omega} A \Psi'(\varphi) \phi + B \nabla \varphi \cdot \nabla \phi - \chi_{\varphi} \sigma \phi \dx, \label{Truncated:mu:weak} \\
 \inner{\pd_{t} \sigma}{\xi} & = \int_{\Omega} - n(\varphi) (\chi_{\sigma} \nabla \sigma - \chi_{\varphi} \nabla \varphi) \cdot \nabla \xi - \lambda_{c} \sigma h(\varphi) \xi \dx \label{Truncated:sigma:weak}\\
\notag & + \int_{\Gamma} \xi K (\sigma_{\infty} - \sigma) \dHaus, 
\end{align}
\end{subequations}
where $\inner{\cdot}{\cdot}$ denotes the duality pairing between $H^{1}$ and its dual $(H^{1})^{*}$.
\end{defn}

\begin{thm}[Existence of global weak solutions]\label{thm:existence}
Let $\Omega \subset \R^{d}$ be a bounded domain with Lipschitz boundary $\Gamma$ and let $T > 0$.  Suppose Assumption \ref{assump:main} is satisfied.  Then, there exists a triplet of functions $(\varphi, \mu, \sigma)$ such that
\begin{align*}
\varphi & \in L^{\infty}(0,T;H^{1}) \cap H^{1}(0,T;(H^{1})^{*}), \quad \mu \in L^{2}(0,T;H^{1}), \\
\sigma & \in L^{2}(0,T;H^{1}) \cap L^{\infty}(0,T;L^{2}) \cap H^{1}(0,T;(H^{1})^{*}),
\end{align*}
and is a weak solution of \eqref{Intro:CHNutrient} in the sense of Definition \ref{defn:weaksolution}.
\end{thm}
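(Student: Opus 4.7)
The plan is a standard Faedo--Galerkin approximation followed by passage to the limit. I would take $\{w_{k}\}_{k \in \N}$ to be an $L^{2}$-orthonormal, $H^{1}$-orthogonal basis of eigenfunctions of the Neumann Laplacian (so that $w_{1}$ is constant) and set $V_{n} := \mathrm{span}\{w_{1}, \dots, w_{n}\}$. I would seek $\varphi_{n}(t) = \sum_{i=1}^{n} a_{i}(t)w_{i}$, $\mu_{n} = \sum_{i=1}^{n} b_{i}(t) w_{i}$, $\sigma_{n} = \sum_{i=1}^{n} c_{i}(t) w_{i}$ satisfying the Galerkin version of \eqref{CHNutrient:truncated:weakform} against test functions in $V_{n}$, with initial data given by $L^{2}$-projections of $\varphi_{0}, \sigma_{0}$. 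The algebraic equation \eqref{Truncated:mu:weak} lets us solve for $b$ continuously in terms of $a$ and $c$, so the system collapses to an ODE for $(a,c)$ with right-hand side continuous (by continuity of $m,n,h,D,\Psi'$ and their boundedness). Cauchy--Peano gives local existence; global existence will follow from the a priori bounds.

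For the energy estimate, I would reproduce the formal computation \eqref{Intro:energyid} at the discrete level by testing \eqref{Truncated:mu:weak} with $\pd_{t}\varphi_{n}$, \eqref{Truncated:varphi:weak} with $\mu_{n}$, and \eqref{Truncated:sigma:weak} with $N_{,\sigma}(\varphi_{n}, \sigma_{n}) = \chi_{\sigma}\sigma_{n} + \chi_{\varphi}(1-\varphi_{n})$. Summing and applying H\"older's and Young's inequalities to the triple products $\mu h(\varphi)\sigma$ and $\sigma h(\varphi) N_{,\sigma}$ (here the global bound $h \leq h_{\infty}$ is essential) produces \eqref{Intro:energy:apriori}. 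Integrating in time, one uses $\Psi(t) \geq R_{1}t^{2} - R_{2}$ from \eqref{assump:Psi} to absorb the $\varphi_{n}^{2}$ term generated when splitting the indefinite cross term $\chi_{\varphi}\sigma(1-\varphi)$ by Young's inequality. The assumption \eqref{assump:constantsrelations} ensures that the remaining coefficient multiplying $\norm{\Psi(\varphi_{n})}_{L^{1}}$ is strictly positive, so that the integral form of Gronwall's inequality closes the estimate and yields, uniformly in $n$,
\begin{equation*}
\varphi_{n} \in L^{\infty}(0,T;H^{1}), \quad \sigma_{n} \in L^{\infty}(0,T;L^{2}) \cap L^{2}(0,T;H^{1}), \quad \nabla \mu_{n} \in L^{2}(0,T;L^{2}),
\end{equation*}
where the $L^{2}(0,T;H^{1})$ bound on $\sigma_{n}$ combines the dissipation of $\nabla N_{,\sigma}$ with the boundary trace control and the Poincar\'e inequality \eqref{boundary:Poincare}.

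To promote $\nabla \mu_{n} \in L^{2}(0,T;L^{2})$ to $\mu_{n} \in L^{2}(0,T;H^{1})$ I would test \eqref{Truncated:mu:weak} with the constant $1 \in V_{n}$, yielding $\abs{\overline{\mu_{n}}} \leq C(\norm{\Psi'(\varphi_{n})}_{L^{1}} + \norm{\sigma_{n}}_{L^{1}})$; the linear growth of $\Psi'$ in \eqref{assump:Psi} and the previous $L^{\infty}(L^{2})$ bounds make this square integrable in time, and \eqref{regular:Poincare} then gives the full $H^{1}$ bound. The time-derivative bounds $\pd_{t}\varphi_{n}, \pd_{t}\sigma_{n} \in L^{2}(0,T;(H^{1})^{*})$ follow directly from \eqref{Truncated:varphi:weak} and \eqref{Truncated:sigma:weak} together with the boundedness of $m,n,h,D$ and the trace inequality for the boundary term.

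Finally, by Banach--Alaoglu and the Aubin--Lions lemma, a subsequence of $(\varphi_{n}, \mu_{n}, \sigma_{n})$ converges weakly-$*$ in $L^{\infty}(0,T;H^{1}) \times \{0\} \times L^{\infty}(0,T;L^{2})$, weakly in $L^{2}(0,T;H^{1})$ in all three components, weakly in $H^{1}(0,T;(H^{1})^{*})$ for $\varphi_{n}$ and $\sigma_{n}$, and strongly in $L^{2}(0,T;L^{2})$ (hence pointwise a.e.\ along a further subsequence) for $\varphi_{n}$ and $\sigma_{n}$. Continuity of $m,n,h,D,\Psi'$ together with dominated convergence (justified by the uniform bounds and the linear growth of $\Psi'$) handles the nonlinear terms, and the compact trace embedding $L^{2}(0,T;H^{1}) \hookrightarrow\!\hookrightarrow L^{2}(0,T;L^{2}(\Gamma))$ takes care of the Robin boundary contribution. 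The initial conditions are recovered via the embedding $H^{1}(0,T;(H^{1})^{*}) \cap L^{2}(0,T;H^{1}) \hookrightarrow C([0,T];L^{2})$. I expect the main obstacle to be the closure of the a priori estimate: the source terms prevent a Lyapunov structure, the cross term $\chi_{\varphi}\sigma(1-\varphi)$ in the free energy is sign-indefinite, and both difficulties are overcome precisely by imposing the quantitative relation \eqref{assump:constantsrelations} between $A$, $\chi_{\varphi}$, $\chi_{\sigma}$, and the coercivity constant $R_{1}$ of $\Psi$.
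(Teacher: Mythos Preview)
Your proposal is correct and follows essentially the same route as the paper: Neumann--Laplacian Galerkin basis, Cauchy--Peano for the reduced ODE, the energy identity tested against $\mu_{n}$, $\pd_{t}\varphi_{n}$, and $N_{,\sigma}$, closure via \eqref{assump:constantsrelations} and the integral Gronwall inequality, control of $\overline{\mu_{n}}$ through the linear growth of $\Psi'$, and Aubin--Lions compactness. One small slip: the embedding $L^{2}(0,T;H^{1}) \hookrightarrow L^{2}(0,T;L^{2}(\Gamma))$ is continuous but not compact; however this is harmless, since the uniform bound on $\norm{\sigma_{n}}_{L^{2}(0,T;L^{2}(\Gamma))}$ already gives weak convergence of the trace, which is all that is needed to pass to the limit in the Robin term against the fixed test function $\delta w_{j}$.
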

The embedding of $L^{2}(0,T;H^{1}) \cap H^{1}(0,T;(H^{1})^{*})$ into $C([0,T]; L^{2})$ guarantees that the initial data are meaningful.  We point out that the assumption \eqref{assump:constantsrelations} arises from using Young's inequality to estimate the term $\chi_{\varphi} \sigma (1-\varphi)$ in \eqref{Intro:energyid}, and is by no means an optimal assumption.  See Remark \ref{remark:constantrelations} for more details.  In addition, Theorem \ref{thm:existence} gives the existence of weak solutions in any dimension.  This is thanks to the fact that $\Psi'$ has linear growth (see $\eqref{assump:Psi}_{2}$).

Next, we show continuous dependence on initial and boundary data and uniqueness of weak solutions under additional assumptions on the interpolation function $h(\cdot)$ and the mobilities $m(\cdot)$ and $n(\cdot)$.
\begin{thm}[Continuous dependence and uniqueness]\label{thm:ctsdep}
Let $d \leq 4$.  Suppose $h(\cdot) \in C^{0,1}(\R)$,  $m(\cdot)$ and $n(\cdot)$ are constant mobilities $($without loss of generality we set $m(\cdot) = n(\cdot) = 1)$.  For $i = 1,2$, let
\begin{align*}
\varphi_{i} & \in L^{\infty}(0,T;H^{1}) \cap H^{1}(0,T;(H^{1})^{*}), \quad \mu_{i} \in L^{2}(0,T;H^{1}), \\
\sigma_{i} & \in L^{2}(0,T;H^{1}) \cap L^{\infty}(0,T;L^{2}) \cap H^{1}(0,T;(H^{1})^{*})
\end{align*}
denote two weak solutions of \eqref{Intro:CHNutrient} satisfying \eqref{CHNutrient:truncated:weakform} with corresponding initial data $\varphi_{i}(0) = \varphi_{0,i} \in H^{1}$, $\sigma_{i}(0) = \sigma_{0,i} \in L^{2}$, and boundary data $\sigma_{\infty,i} \in L^{2}(0,T;L^{2}(\Gamma))$.  Then,
\begin{align*}
 \sup_{s \in [0,T]} & \left (\norm{\sigma_{1}(s) - \sigma_{2}(s)}_{L^{2}}^{2} + \norm{\varphi_{1}(s) - \varphi_{2}(s)}_{L^{2}}^{2} \right ) \\
& +  \norm{\mu_{1} - \mu_{2}}_{L^{2}(0,T;L^{2})}^{2} + \norm{\nabla (\sigma_{1} - \sigma_{2})}_{L^{2}(0,T;L^{2})}^{2} \\
& + \norm{\sigma_{1} - \sigma_{2}}_{L^{2}(0,T;L^{2}(\Gamma))}^{2} + \norm{\nabla(\varphi_{1} - \varphi_{2})}_{L^{2}(0,T;L^{2})}^{2} \\
& \leq  C \left (\norm{\sigma_{0,1} - \sigma_{0,2}}_{L^{2}}^{2} + \norm{\varphi_{0,1} - \varphi_{0,2}}_{L^{2}}^{2} + \norm{\sigma_{\infty,1} - \sigma_{\infty,2}}_{L^{2}(0,T;L^{2}(\Gamma))}^{2} \right ),
\end{align*}
where the constant $C$ depends on $\norm{\sigma_{i}}_{L^{\infty}(0,T;L^{2})}$, $T$, $K$, $h_{\infty}$, $\Omega$, $d$, $A$, $B$, $\lambda_{p}$, $\lambda_{c}$, $\lambda_{a}$, $\chi_{\varphi}$, $\chi_{\sigma}$, and $\mathrm{L}_{h}$, $\mathrm{L}_{\Psi'}$ which denote the Lipschitz constants of $h$ and $\Psi'$, respectively.
\end{thm}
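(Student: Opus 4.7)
The plan is to form the differences $\varphi := \varphi_{1} - \varphi_{2}$, $\mu := \mu_{1} - \mu_{2}$, $\sigma := \sigma_{1} - \sigma_{2}$, together with the differences of initial and boundary data, and observe that they satisfy the obvious linearised version of \eqref{CHNutrient:truncated:weakform} in which $\Psi'$ and $h$ enter only through their Lipschitz increments. The strategy is a pure $L^{2}$-based energy estimate: combine three well-chosen test identities and close via Gronwall's inequality. Uniqueness then follows as the special case of vanishing data differences.

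Concretely, three tests are needed. First, testing the difference of \eqref{Truncated:varphi:weak} with $B\varphi$ and the difference of \eqref{Truncated:mu:weak} with $\mu$ (and subtracting to cancel the common cross term $B\int_{\Omega}\nabla\mu\cdot\nabla\varphi\dx$) produces
\begin{equation*}
\tfrac{B}{2}\tfrac{d}{dt}\|\varphi\|_{L^{2}}^{2}+\|\mu\|_{L^{2}}^{2}=A\int_{\Omega}(\Psi'(\varphi_{1})-\Psi'(\varphi_{2}))\mu\dx-\chi_{\varphi}\int_{\Omega}\sigma\mu\dx+B\int_{\Omega}R_{\varphi}\varphi\dx,
\end{equation*}
where $R_{\varphi}:=(\lambda_{p}\sigma_{1}-\lambda_{a})h(\varphi_{1})-(\lambda_{p}\sigma_{2}-\lambda_{a})h(\varphi_{2})$. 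Next, testing the difference of \eqref{Truncated:sigma:weak} with $\sigma$ gives
\begin{equation*}
\tfrac{1}{2}\tfrac{d}{dt}\|\sigma\|_{L^{2}}^{2}+\chi_{\sigma}\|\nabla\sigma\|_{L^{2}}^{2}+K\|\sigma\|_{L^{2}(\Gamma)}^{2}=\chi_{\varphi}\int_{\Omega}\nabla\varphi\cdot\nabla\sigma\dx-\lambda_{c}\int_{\Omega}R_{\sigma}\sigma\dx+K\int_{\Gamma}\sigma_{\infty}\sigma\dHaus,
\end{equation*}
with $R_{\sigma}:=\sigma_{1}h(\varphi_{1})-\sigma_{2}h(\varphi_{2})$. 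Finally, testing the difference of \eqref{Truncated:mu:weak} with $\varphi$ itself yields the pointwise-in-time auxiliary identity
\begin{equation*}
B\|\nabla\varphi\|_{L^{2}}^{2}=\int_{\Omega}\mu\varphi\dx-A\int_{\Omega}(\Psi'(\varphi_{1})-\Psi'(\varphi_{2}))\varphi\dx+\chi_{\varphi}\int_{\Omega}\sigma\varphi\dx,
\end{equation*}
which, by Young's inequality and the Lipschitzness of $\Psi'$, gives $\|\nabla\varphi\|_{L^{2}}^{2}\leq\varepsilon\|\mu\|_{L^{2}}^{2}+C_{\varepsilon}(\|\varphi\|_{L^{2}}^{2}+\|\sigma\|_{L^{2}}^{2})$ for any prescribed $\varepsilon>0$.

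The technical heart of the argument is the treatment of the source increments $R_{\varphi}$ and $R_{\sigma}$. Writing for example $R_{\sigma}=h(\varphi_{1})\sigma+\sigma_{2}(h(\varphi_{1})-h(\varphi_{2}))$ and using the Lipschitzness of $h$ produces, among other contributions, the trilinear integrals
\begin{equation*}
\int_{\Omega}|\sigma_{2}|\,|\varphi|^{2}\dx\qquad\text{and}\qquad\int_{\Omega}|\sigma_{2}|\,|\varphi|\,|\sigma|\dx,
\end{equation*}
in which $\sigma_{2}$ is known only to lie in $L^{\infty}(0,T;L^{2}(\Omega))$. This is precisely where the hypothesis $d\leq 4$ becomes essential: the Sobolev embedding $H^{1}(\Omega)\hookrightarrow L^{4}(\Omega)$ lets H\"older bound these integrals by $\|\sigma_{2}\|_{L^{\infty}(0,T;L^{2})}$ times $\|\varphi\|_{H^{1}}^{2}$ and $\|\varphi\|_{H^{1}}\|\sigma\|_{H^{1}}$ respectively. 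The main obstacle is the rebalancing of the resulting $\|\nabla\varphi\|^{2}$ and $\|\nabla\sigma\|^{2}$ pieces on the right-hand side: I would add a sufficiently large positive multiple of the auxiliary identity to the sum of the two energy identities so that a genuine positive multiple of $\|\nabla\varphi\|_{L^{2}}^{2}$ remains on the left, while choosing the Young parameters in the trilinear estimates so that the $\|\nabla\sigma\|^{2}$ contributions are absorbed into the $\chi_{\sigma}\|\nabla\sigma\|_{L^{2}}^{2}$ term.

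After these absorptions, together with routine estimates for the remaining linear pieces such as $K\int_{\Gamma}\sigma_{\infty}\sigma\dHaus\leq\tfrac{K}{2}\|\sigma\|_{L^{2}(\Gamma)}^{2}+\tfrac{K}{2}\|\sigma_{\infty}\|_{L^{2}(\Gamma)}^{2}$, the combined inequality reduces to
\begin{equation*}
\tfrac{d}{dt}\bigl(\|\varphi\|_{L^{2}}^{2}+\|\sigma\|_{L^{2}}^{2}\bigr)+c\bigl(\|\mu\|_{L^{2}}^{2}+\|\nabla\sigma\|_{L^{2}}^{2}+\|\sigma\|_{L^{2}(\Gamma)}^{2}+\|\nabla\varphi\|_{L^{2}}^{2}\bigr)\leq C\bigl(\|\varphi\|_{L^{2}}^{2}+\|\sigma\|_{L^{2}}^{2}\bigr)+C\|\sigma_{\infty}\|_{L^{2}(\Gamma)}^{2},
\end{equation*}
with $c>0$ and $C$ depending on $\|\sigma_{i}\|_{L^{\infty}(0,T;L^{2})}$, $T$ and the structural constants listed in the statement. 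Gronwall's inequality applied to this differential inequality yields the $L^{\infty}_{t}L^{2}_{x}$ control of $\varphi$ and $\sigma$ in terms of the data differences, and a further integration in time produces the four remaining norms on the left-hand side of the claim.
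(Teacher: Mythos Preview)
Your proposal is correct and follows the paper's argument in Section~\ref{sec:Uniqueness} essentially step for step: the same three test choices, the same auxiliary identity for $\|\nabla\varphi\|_{L^{2}}^{2}$ added with a large weight (the paper calls it $\mathcal{X}$), the same $H^{1}\hookrightarrow L^{4}$ embedding to handle the trilinear source terms involving $\sigma_{i}$, and the same closing Gronwall argument. The only cosmetic difference is that the paper tests the $\mu$-difference equation with $\lambda = \mu - \chi_{\varphi}\sigma$ rather than $\lambda = \mu$, which cancels the cross term $\chi_{\varphi}\int_{\Omega}\nabla\varphi\cdot\nabla\sigma\dx$ coming from the $\sigma$-equation directly instead of absorbing it via Young's inequality as you do.
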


We point out that Theorem \ref{thm:ctsdep} provides continuous dependence for the difference of the chemical potentials $\norm{\mu_{1} - \mu_{2}}_{L^{2}(\Omega \times (0,T))}$ and also with a stronger norm $\norm{\varphi_{1}(t) - \varphi_{2}(t)}_{L^{\infty}(0,T;L^{2})}$ for the difference of the order parameters.  This is in contrast with the classical norm $\norm{\varphi_{1}(t) - \varphi_{2}(t)}_{L^{\infty}(0,T;(H^{1})^{*})}$ one obtains for the Cahn--Hilliard equation, compare \cite[Theorem 2]{article:FrigeriGrasselliRocca15}.

We will now consider the quasi-static system \eqref{Intro:CHN:quasi}.

\begin{defn}\label{defn:weaksolutionquasi}
We call a triplet of functions $(\varphi, \mu, \sigma)$ a weak solution to \eqref{Intro:CHN:quasi} if
\begin{align*}
\sigma, \mu \in L^{2}(0,T;H^{1}), \quad \varphi \in H^{1}(0,T;(H^{1})^{*}) \cap L^{2}(0,T;H^{1}), 
\end{align*}
with $\varphi(0) = \varphi_{0}$ and satisfy for $\zeta, \lambda, \xi \in H^{1}$ and a.e. $t \in (0,T)$, 
\begin{subequations}\label{CHNutrient:quasi:weakform}
\begin{align}
\inner{\pd_{t} \varphi}{\zeta} & = \int_{\Omega} - m(\varphi) \nabla \mu \cdot \nabla \zeta + (\lambda_{p} \sigma - \lambda_{a}) h(\varphi) \zeta  \dx, \label{quasi:varphi:weak} \\
\int_{\Omega} \mu \lambda \dx & = \int_{\Omega} A \Psi'(\varphi) \lambda + B \nabla \varphi \cdot \nabla \lambda - \chi_{\varphi} \sigma \lambda \dx, \label{quasi:mu:weak} \\
\int_{\Gamma} \xi K (\sigma_{\infty} - \sigma) \dHaus & = \int_{\Omega} D(\varphi) (\nabla \sigma - \eta \nabla \varphi) \cdot \nabla \xi + \lambda_{c} \sigma h(\varphi) \xi \dx. \label{quasi:sigma:weak}
\end{align}
\end{subequations}
\end{defn}

\begin{thm}[Existence and regularity of global weak solutions]\label{thm:quasi:exist}
Let $\Omega \subset \R^{d}$ be a bounded domain with Lipschitz boundary $\Gamma$ and let $T > 0$.  Suppose Assumption \ref{assump:main} is satisfied, and let $A$ be a positive constant which need not satisfy \eqref{assump:constantsrelations}.  Then, there exists a triplet of functions $(\varphi, \mu, \sigma)$ such that
\begin{align*}
\sigma, \mu \in L^{2}(0,T;H^{1}), \quad \varphi  \in L^{\infty}(0,T;H^{1}) \cap H^{1}(0,T;(H^{1})^{*}), 
\end{align*}
and is a weak solution of \eqref{Intro:CHN:quasi} in the sense of Definition \ref{defn:weaksolutionquasi}.  Furthermore, if $\sigma_{\infty} \in L^{\infty}(0,T;L^{2}(\Gamma))$, then
\begin{align*}
\sigma \in L^{\infty}(0,T;H^{1}).
\end{align*}
\end{thm}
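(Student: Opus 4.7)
The plan is to mirror the Galerkin--compactness scheme used for Theorem~\ref{thm:existence}, adapted to the elliptic nature of the nutrient equation. I would fix a Hilbert basis $\{w_k\}_{k \in \N}$ of $H^{1}$ (say, eigenfunctions of the Neumann Laplacian), set $V_n := \mathrm{span}\{w_1,\dots,w_n\}$, and seek approximations $(\varphi_n,\mu_n,\sigma_n) \in V_n^3$ satisfying the Galerkin analogues of \eqref{quasi:varphi:weak}--\eqref{quasi:sigma:weak} with $\varphi_n(0)$ the $L^{2}$-projection of $\varphi_0$. Because \eqref{quasi:sigma:weak} carries no time derivative, the discrete problem is differential--algebraic: at each $t$, since $D_0,K>0$, the Galerkin form of \eqref{quasi:sigma:weak} is uniquely solvable for $\sigma_n(t) \in V_n$ as a Lipschitz function of $\varphi_n(t)$ via Lax--Milgram, and \eqref{quasi:mu:weak} then defines $\mu_n(t)$. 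Substituting into \eqref{quasi:varphi:weak} yields an ODE for the coefficients of $\varphi_n$ with locally Lipschitz right-hand side, giving short-time existence.

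The core of the argument is the a priori estimate, and the key observation is that the formal identity \eqref{Intro:Quasi:EnergyID} carries no indefinite term under the time derivative, so the constraint $A > k_{7}$ present in Theorem~\ref{thm:existence} disappears here. I would test the Galerkin versions of \eqref{quasi:mu:weak} with $\partial_t\varphi_n$, of \eqref{quasi:varphi:weak} with $\chi_\varphi \sigma_n + \mu_n$, and of \eqref{quasi:sigma:weak} with $\sigma_n$, and sum. The cross terms $-m(\varphi_n)\chi_\varphi\nabla\mu_n\cdot\nabla\sigma_n$ and $\eta D(\varphi_n)\nabla\varphi_n\cdot\nabla\sigma_n$ on the right-hand side are absorbed into $m_0 \norm{\nabla\mu_n}_{L^{2}}^{2}$ and $D_0\norm{\nabla\sigma_n}_{L^{2}}^{2}$ by Young's inequality, while the source contribution $(\lambda_p\sigma_n - \lambda_a)h(\varphi_n)(\chi_\varphi \sigma_n + \mu_n)$ is controlled by H\"{o}lder's inequality, using $0 \leq h \leq h_\infty$ together with the bound on the mean $\overline{\mu_n}$ obtained by testing \eqref{quasi:mu:weak} against $1$ (which exploits the linear growth of $\Psi'$) and the Poincar\'{e} inequalities \eqref{regular:Poincare}--\eqref{boundary:Poincare}. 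A Gronwall argument combined with \eqref{assump:Psi} then yields uniform bounds on $\varphi_n$ in $L^{\infty}(0,T;H^{1})$, on $\mu_n$ and $\sigma_n$ in $L^{2}(0,T;H^{1})$, on $\sigma_n$ additionally in $L^{2}(0,T;L^{2}(\Gamma))$, and on $\partial_t \varphi_n$ in $L^{2}(0,T;(H^{1})^{*})$. I expect the \emph{main obstacle} to be precisely closing these estimates without invoking a lower bound on $A$; this succeeds because the $\chi_\varphi \sigma(1-\varphi)$ contribution no longer appears under the time derivative.

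To pass to the limit, Aubin--Lions applied to the bounds on $\varphi_n$ gives strong convergence $\varphi_n \to \varphi$ in $L^{2}(0,T;L^{2})$ (and almost everywhere) along a subsequence, so $m(\varphi_n)$, $D(\varphi_n)$, $h(\varphi_n)$ and $\Psi'(\varphi_n)$ converge strongly in every $L^{p}(\Omega\times(0,T))$ with $p<\infty$ by dominated convergence; combined with the weak $L^{2}(0,T;H^{1})$-convergence of $\mu_n$ and $\sigma_n$ this identifies the limits in all nonlinear flux and source terms. The absence of $\partial_t\sigma_n$ is not a problem, since \eqref{quasi:sigma:weak} holds pointwise in $t$ on $V_n$ and one passes to the limit in that identity directly. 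Finally, for the additional regularity, given $\sigma_\infty \in L^{\infty}(0,T;L^{2}(\Gamma))$ I would test \eqref{quasi:sigma:weak} with $\sigma(t)$ at almost every $t$; absorbing the terms $\eta D(\varphi)\nabla\varphi\cdot\nabla\sigma$ and $K\sigma_\infty\sigma$ via Young's inequality and invoking the boundary Poincar\'{e} inequality \eqref{boundary:Poincare} yields
\begin{equation*}
\norm{\sigma(t)}_{H^{1}}^{2} \leq C\left( \norm{\nabla\varphi(t)}_{L^{2}}^{2} + \norm{\sigma_\infty(t)}_{L^{2}(\Gamma)}^{2} \right),
\end{equation*}
which is uniformly bounded in $t$ by the already established $\varphi \in L^{\infty}(0,T;H^{1})$ and the hypothesis on $\sigma_\infty$.
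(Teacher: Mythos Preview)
Your plan is correct and follows essentially the same route as the paper: the same Galerkin setup with the elliptic $\sigma$-equation inverted at each time, the same testing strategy ($\partial_t\varphi_n$ in \eqref{quasi:mu:weak}, $\mu_n+\chi_\varphi\sigma_n$ in \eqref{quasi:varphi:weak}, $\sigma_n$ in \eqref{quasi:sigma:weak}), and the same further-regularity argument by testing \eqref{quasi:sigma:weak} with $\sigma$. Two minor points to watch when you write out details: under Assumption~\ref{assump:main} the functions $D$, $h$, $m$ are only continuous, so the ODE right-hand side is merely continuous and you should invoke Cauchy--Peano rather than claiming a Lipschitz dependence; and to absorb the cross term $-\chi_\varphi m(\varphi_n)\nabla\mu_n\cdot\nabla\sigma_n$ together with the source contribution $\lambda_p h(\varphi_n)\chi_\varphi|\sigma_n|^{2}$ into $D_0\norm{\nabla\sigma_n}_{L^{2}}^{2}+K\norm{\sigma_n}_{L^{2}(\Gamma)}^{2}$ for \emph{arbitrary} parameter values, the paper multiplies the $\sigma$-equation by a large weight $\mathcal{W}$ before summing---without this the absorption can fail when $\chi_\varphi^{2}m_{1}$ or $\lambda_p h_\infty\chi_\varphi C_{\mathrm{P}}^{2}$ is large relative to $D_0$ and $K$.
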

In Section \ref{sec:quasi} we derive the a priori estimates and deduce the existence of approximate solutions on the Galerkin level.  The proof of Theorem \ref{thm:quasi:exist} then follows from standard compactness results.  In Section \ref{sec:quasi:ctsdep}, we show the continuous dependence on initial and boundary data and uniqueness under additional assumptions.

\begin{thm}[Continuous dependence and uniqueness]\label{thm:quasi:ctsdep}
Let $d \leq 4$.  Suppose $h(\cdot) \in C^{0,1}(\R)$,  $m$ and $D$ are constant mobilities $($without loss of generality we set $m = 1)$.  For $i = 1,2$, let
\begin{align*}
\varphi_{i} \in L^{\infty}(0,T;H^{1}) \cap H^{1}(0,T;(H^{1})^{*}), \quad \mu_{i} \in L^{2}(0,T;H^{1}), \quad \sigma_{i}  \in L^{\infty}(0,T;H^{1})
\end{align*}
denote two weak solutions of \eqref{Intro:CHN:quasi} satisfying \eqref{CHNutrient:quasi:weakform} with corresponding initial data $\varphi_{i}(0) = \varphi_{0,i} \in H^{1}$ and boundary data $\sigma_{\infty,i} \in L^{\infty}(0,T;L^{2}(\Gamma))$.  Then,
\begin{align*}
\sup_{s \in [0,T]}  & \norm{\varphi_{1}(s) - \varphi_{2}(s)}_{L^{2}}^{2} + \norm{\mu_{1} - \mu_{2}}_{L^{2}(0,T;L^{2})}^{2}  + \norm{\nabla(\varphi_{1} - \varphi_{2})}_{L^{2}(0,T;L^{2})}^{2}\\
& + \norm{\nabla (\sigma_{1} - \sigma_{2})}_{L^{2}(0,T;L^{2})}^{2} + \norm{\sigma_{1} - \sigma_{2}}_{L^{2}(0,T;L^{2}(\Gamma))}^{2} \\
& \leq  C \left ( \norm{\varphi_{0,1} - \varphi_{0,2}}_{L^{2}}^{2} + \norm{\sigma_{\infty,1} - \sigma_{\infty,2}}_{L^{2}(0,T;L^{2}(\Gamma))}^{2} \right ),
\end{align*}
where the constant $C$ depends on $\norm{\sigma_{i}}_{L^{\infty}(0,T;H^{1})}$, $K$, $\Omega$, $A$, $B$, $\mathrm{L}_{h}$, $\mathrm{L}_{\Psi'}$, $\lambda_{p}$, $\lambda_{c}$, $\lambda_{a}$, $\chi_{\varphi}$, and $T$.
\end{thm}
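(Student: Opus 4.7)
The plan is to derive a differential inequality for $\|\varphi_{1} - \varphi_{2}\|_{L^{2}}^{2}$ together with $L^{2}$-in-time estimates for the remaining norms, then close with Gronwall's lemma. Write $\varphi := \varphi_{1} - \varphi_{2}$, $\mu := \mu_{1} - \mu_{2}$, $\sigma := \sigma_{1} - \sigma_{2}$, and analogously for the initial and boundary data. Subtracting the two weak formulations \eqref{CHNutrient:quasi:weakform} yields the corresponding difference equations with reaction terms
\[ R_{\varphi} := \lambda_{p}\bigl[\sigma_{1} h(\varphi_{1}) - \sigma_{2} h(\varphi_{2})\bigr] - \lambda_{a}\bigl[h(\varphi_{1}) - h(\varphi_{2})\bigr], \qquad R_{\sigma} := \lambda_{c}\bigl[\sigma_{1} h(\varphi_{1}) - \sigma_{2} h(\varphi_{2})\bigr]. \]

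Four carefully chosen tests drive the argument. (T1) Take $\xi = \sigma$ in the $\sigma$-difference equation, which gives $\|\nabla \sigma\|_{L^{2}}^{2} + K\|\sigma\|_{L^{2}(\Gamma)}^{2}$ on the left together with the cross term $\eta \int_{\Omega} \nabla \varphi \cdot \nabla \sigma \dx$, the boundary forcing $K \int_{\Gamma} \sigma (\sigma_{\infty,1} - \sigma_{\infty,2}) \dHaus$, and the reaction term $\int_{\Omega} R_{\sigma} \sigma \dx$ on the right. (T2) Take $\zeta = \varphi$ in the $\varphi$-difference equation (legitimate because $\pd_{t}\varphi \in L^{2}(0,T;(H^{1})^{*})$ and $\varphi \in L^{2}(0,T;H^{1})$), yielding $\frac{1}{2}\frac{d}{dt}\|\varphi\|_{L^{2}}^{2} + \int_{\Omega} \nabla \mu \cdot \nabla \varphi \dx = \int_{\Omega} R_{\varphi} \varphi \dx$. (T3) Take $\lambda = \mu$ in the $\mu$-difference equation and solve the resulting identity for the cross term,
\[ B \int_{\Omega} \nabla \varphi \cdot \nabla \mu \dx = \|\mu\|_{L^{2}}^{2} - A \int_{\Omega} (\Psi'(\varphi_{1}) - \Psi'(\varphi_{2})) \mu \dx + \chi_{\varphi} \int_{\Omega} \sigma \mu \dx. \]
(T4) Take $\lambda = \varphi$ in the $\mu$-difference equation,
\[ B \|\nabla \varphi\|_{L^{2}}^{2} = \int_{\Omega} \mu \varphi \dx - A \int_{\Omega} (\Psi'(\varphi_{1}) - \Psi'(\varphi_{2})) \varphi \dx + \chi_{\varphi} \int_{\Omega} \sigma \varphi \dx. \]

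Substituting (T3) into (T2), adding (T1), and adding a small positive multiple of (T4) to bring a $\|\nabla \varphi\|_{L^{2}}^{2}$ term onto the LHS, then applying Young's inequality and using the boundary Poincar\'{e} inequality \eqref{boundary:Poincare} to control $\|\sigma\|_{L^{2}}$, produces a differential inequality
\[ \frac{d}{dt}\|\varphi\|_{L^{2}}^{2} + c_{1}\|\mu\|_{L^{2}}^{2} + c_{2}\|\nabla \varphi\|_{L^{2}}^{2} + c_{3}\bigl(\|\nabla \sigma\|_{L^{2}}^{2} + \|\sigma\|_{L^{2}(\Gamma)}^{2}\bigr) \leq C\|\varphi\|_{L^{2}}^{2} + C\|\sigma_{\infty,1} - \sigma_{\infty,2}\|_{L^{2}(\Gamma)}^{2}. \]
The reaction differences and the various triple products produced by the Lipschitz splitting $\sigma_{1} h(\varphi_{1}) - \sigma_{2} h(\varphi_{2}) = h(\varphi_{1}) \sigma + \sigma_{2}(h(\varphi_{1}) - h(\varphi_{2}))$ are controlled via Lipschitz continuity of $h$ and $\Psi'$ combined with the Sobolev embedding $H^{1} \hookrightarrow L^{4}$, which is precisely why the dimension restriction $d \leq 4$ is imposed. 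A typical dangerous triple product such as $\int_{\Omega} \sigma_{2} \varphi \mu \dx$ is estimated by $\|\sigma_{2}\|_{L^{4}}\|\varphi\|_{L^{4}}\|\mu\|_{L^{2}} \leq C \|\sigma_{2}\|_{H^{1}}\|\varphi\|_{H^{1}}\|\mu\|_{L^{2}}$, with $\|\sigma_{i}\|_{L^{\infty}(0,T;H^{1})}$ absorbed into the constant---this is exactly the point where the stronger regularity of $\sigma_{i}$ from Theorem \ref{thm:quasi:exist} is essential, in contrast to the weaker $L^{\infty}(0,T;L^{2})$-information available in the setting of Theorem \ref{thm:ctsdep}.

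The main obstacle is the bookkeeping: the $\|\nabla \varphi\|_{L^{2}}^{2}$-factors generated by the cross term in (T1) and by every triple-product estimate must all be absorbed into $c_{2}$ on the LHS, which is only possible because (T4), not used in the existence proof, furnishes this control. Importantly, this absorption works independently of the size of $A$, so the smallness restriction \eqref{assump:constantsrelations} does not reappear, matching the hypothesis of Theorem \ref{thm:quasi:ctsdep}. Once the differential inequality is in place, Gronwall's lemma applied pointwise in time yields the $L^{\infty}(0,T;L^{2})$-bound on $\varphi$, and a subsequent integration in $t$ over $[0,T]$ delivers the remaining $L^{2}(0,T;\cdot)$-bounds on $\mu$, $\nabla \varphi$, $\nabla \sigma$ and $\sigma|_{\Gamma}$. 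Uniqueness follows by specialising to identical initial and boundary data.
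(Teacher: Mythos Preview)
Your proposal is correct and follows essentially the same route as the paper's proof in Section~\ref{sec:quasi:ctsdep}: the same four test functions (T1)--(T4), the same linear combination (the paper writes it as $B$ times \eqref{quasi:uniq:apr1} plus $\mathcal{Z}$ times \eqref{quasi:uniq:apr2} plus $\mathcal{Y}$ times \eqref{quasi:uniq:apr3} plus \eqref{quasi:uniq:apr4}), the same use of the boundary Poincar\'e inequality to control $\|\sigma\|_{L^{2}}$, and the same Gronwall conclusion. Two small remarks: the multiplier of (T4) must be chosen \emph{large} enough (not ``small''), since it has to absorb all the $\|\nabla\varphi\|_{L^{2}}^{2}$ contributions produced by the reaction estimates and the $\eta$-cross term; and your illustrative triple product $\int_{\Omega}\sigma_{2}\varphi\mu\,dx$ does not actually arise---the term that genuinely forces $\sigma_{i}\in L^{\infty}(0,T;H^{1})$ is $\lambda_{c}\int_{\Omega}\sigma_{1}(h(\varphi_{1})-h(\varphi_{2}))\sigma\,dx$ from (T1), estimated via $\|\sigma_{1}\|_{L^{4}}\|\varphi\|_{L^{4}}\|\sigma\|_{L^{2}}$.
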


\section{Useful estimates}\label{sec:apriori}
We will use a modified version of Gronwall's inequality in integral form.
\begin{lemma}\label{lem:GronwallInt}
Let $\alpha, \beta, u$ and $v$ be real-valued functions defined on $I:=[0,T]$.  Assume that $\alpha$ is integrable, $\beta$ is non-negative and continuous, $u$ is continuous, $v$ is non-negative and integrable.  Suppose $u$ and $v$ satisfy the integral inequality
\begin{align}\label{Gronwall:int:ineq}
u(s) + \int_{0}^{s} v(t) \dt \leq \alpha(s) + \int_{0}^{s} \beta(t) u(t) \dt \quad \forall s \in I.
\end{align}
Then,
\begin{align}\label{Gronwall}
u(s) + \int_{0}^{s} v(t) \dt \leq \alpha(s) + \int_{0}^{s} \alpha(t) \beta(t) \exp \left ( \int_{t}^{s} \beta(r) \dr \right ) \dt.
\end{align}
\end{lemma}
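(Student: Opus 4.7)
The plan is to reduce the statement to the classical integral form of Gronwall's inequality by absorbing the nonnegative dissipation term into the quantity on the left. First I would set
$$U(s) := u(s) + \int_{0}^{s} v(t) \dt$$
and record three elementary facts: $U$ is continuous on $I$, since $u$ is continuous and the absolute continuity of $s \mapsto \int_0^s v(t) \dt$ follows from integrability of $v$; one has $u(s) \leq U(s)$ for every $s \in I$, because $v \geq 0$; and $U(s)$ is precisely the quantity that appears on the left-hand side of both \eqref{Gronwall:int:ineq} and the desired conclusion \eqref{Gronwall}.

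Next, I would feed these observations back into the hypothesis. Since $\beta \geq 0$ and $u(t) \leq U(t)$, one obtains
$$U(s) \leq \alpha(s) + \int_{0}^{s} \beta(t) u(t) \dt \leq \alpha(s) + \int_{0}^{s} \beta(t) U(t) \dt,$$
so that $U$ itself satisfies a classical Gronwall inequality with integrable datum $\alpha$ and nonnegative continuous kernel $\beta$. Applying the standard integral Gronwall lemma to $U$ then yields
$$U(s) \leq \alpha(s) + \int_{0}^{s} \alpha(t) \beta(t) \exp \left ( \int_{t}^{s} \beta(r) \dr \right ) \dt,$$
which is exactly \eqref{Gronwall} after unpacking $U$.

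For completeness one can reproduce the classical Gronwall step: set $V(s) := \int_{0}^{s} \beta(t) U(t) \dt$, observe that $V$ is absolutely continuous with $V'(s) = \beta(s) U(s) \leq \beta(s)(\alpha(s) + V(s))$ for a.e. $s$, multiply by the integrating factor $\exp ( - \int_{0}^{s} \beta(r) \dr )$, and integrate over $[0,s]$ to obtain an explicit bound for $V(s)$; substituting this into $U \leq \alpha + V$ produces the claimed estimate. The only subtlety is that $\alpha$ is merely assumed integrable rather than continuous, but the integrating-factor argument is unaffected, since $\beta$ is bounded on the compact interval $[0,T]$ and the product $\alpha \beta \exp(\cdots)$ therefore remains integrable. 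I do not expect any substantive obstacle here; the lemma is a minor variant of the standard Gronwall inequality whose only purpose is to keep track of the dissipation term $\int_0^s v(t) \dt$ that will arise later in the paper's a priori estimates.
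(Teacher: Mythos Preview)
Your argument is correct and follows exactly the approach in the paper: define $w(s) := u(s) + \int_0^s v(t)\,dt$, use non-negativity of $v$ and $\beta$ to obtain $w(s) \leq \alpha(s) + \int_0^s \beta(t) w(t)\,dt$, and then invoke the standard integral Gronwall inequality. The paper's proof is terser (it omits the verification of continuity of $w$ and the reproduction of the classical Gronwall step), but the substance is identical.
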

This differs from the usual Gronwall's inequality in integral form by an extra term $\int_{0}^{s} v(t) \dt$ on the left-hand side.
\begin{proof}
Let 
\begin{align*}
w(s) := u(s) + \int_{0}^{s} v(t) \dt.
\end{align*}
Then, by \eqref{Gronwall:int:ineq} and the non-negativity of $\beta$ and $v$, 
\begin{align*}
w(s) \leq \alpha(s) + \int_{0}^{s} \beta(t) w(t) \dt.
\end{align*}
Applying the standard Gronwall's inequality in integral form yields the required result.
\end{proof}

Below we will derive the first a priori estimate for sufficiently smooth solutions to \eqref{Intro:CHNutrient}, in particular this will hold for the Galerkin approximations in Section \ref{sec:Galerkinapprox}.  We choose to present this estimate here due to the length of the derivation.

\begin{lemma}\label{lem:MainAprioriEst:1}
Suppose Assumption \ref{assump:main} is satisfied.  Let $(\varphi, \mu, \sigma)$ be a triplet of functions satisfying \eqref{CHNutrient:truncated:weakform} with $\varphi(0) = \varphi_{0}$ and $\sigma(0) = \sigma_{0}$, and $\varphi, \sigma \in C^{1}([0,T];H^{1})$, $\mu \in C^{0}([0,T];H^{1})$.  Then, there exists a positive constant $\overline{C}$ depending on $T$, $\Omega$, $\Gamma$, $d$, $R_{1}$, $R_{2}$, $R_{3}$, the parameters $\lambda_{p}$, $\lambda_{a}$, $\lambda_{c}$, $\chi_{\sigma}$, $\chi_{\varphi}$, $h_{\infty}$, $m_{0}$, $n_{0}$, $A$, $B$, $K$, the initial-boundary data $\norm{\sigma_{\infty}}_{L^{2}(0,T;L^{2}(\Gamma))}$, $\norm{\varphi(0)}_{H^{1}}$ and $\norm{\sigma(0)}_{L^{2}}$, such that for all $s \in (0,T]$,
\begin{equation}\label{apriori:main}
\begin{aligned}
& \norm{\Psi(\varphi(s))}_{L^{1}} + \norm{\varphi(s)}_{H^{1}}^{2} + \norm{\sigma(s)}_{L^{2}}^{2}  \\
& \quad +  \norm{\nabla \mu}_{L^{2}(0,s;L^{2})}^{2}+ \norm{ \nabla \sigma }_{L^{2}(0,s;L^{2})}^{2} + \norm{\sigma}_{L^{2}(0,s;L^{2}(\Gamma))}^{2} \leq \overline{C}.
\end{aligned}
\end{equation}
\end{lemma}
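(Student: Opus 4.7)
The plan is to carry out rigorously the formal calculation already sketched in the introduction, culminating in the inequality \eqref{Intro:energy:apriori:Gronwall}, and then to close it with the modified Gronwall inequality from Lemma \ref{lem:GronwallInt}. Concretely, I would test \eqref{Truncated:varphi:weak} with $\zeta = \mu$, \eqref{Truncated:mu:weak} with $\phi = \pd_t \varphi$, and \eqref{Truncated:sigma:weak} with $\xi = N_{,\sigma} = \chi_\sigma \sigma + \chi_\varphi(1-\varphi)$, which is admissible since $\sigma, \varphi \in C^1([0,T];H^1)$. Adding the three identities and using the boundary condition $\nabla\varphi\cdot\nu=0$ reproduces the energy identity \eqref{Intro:energyid}; the boundary term $KN_{,\sigma}(\sigma-\sigma_\infty)$ splits into a non-negative part $K\chi_\sigma|\sigma|^2$ (modulo a manageable $\chi_\varphi K(1-\varphi)\sigma$ contribution that is controlled by the interior $\chi_\sigma\sigma(1-\varphi)$ discussion below) plus a sign-indefinite $K N_{,\sigma}\sigma_\infty$ term that is absorbed via Young's inequality against the boundary dissipation using \eqref{boundary:Poincare}.

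Next I would deal with the three obstacles itemised in the introduction. The source terms $-\mu h(\varphi)(\lambda_p\sigma - \lambda_a) + \lambda_c \sigma h(\varphi) N_{,\sigma}$ are handled by using the boundedness $|h| \le h_\infty$, together with Hölder's and Young's inequalities, the Poincaré inequality \eqref{regular:Poincare} applied to $\mu - \mean{\mu}$ (the mean of $\mu$ being controlled by testing \eqref{Truncated:mu:weak} with $\phi\equiv 1$ and invoking the linear growth $|\Psi'(t)|\le R_3(1+|t|)$ from \eqref{assump:Psi}). This yields an estimate of the form \eqref{Intro:energy:apriori}, with the dissipative quantities $k_1(\|\nabla\mu\|_{L^2}^2 + \|\nabla N_{,\sigma}\|_{L^2}^2 + \|\sigma\|_{L^2(\Gamma)}^2)$ on the left and lower-order quadratic quantities in $\sigma$, $\varphi$, $\nabla\varphi$ on the right. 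Note $\|\nabla N_{,\sigma}\|_{L^2}^2 = \|\chi_\sigma\nabla\sigma - \chi_\varphi\nabla\varphi\|_{L^2}^2$ is converted to $\tfrac{\chi_\sigma^2}{2}\|\nabla\sigma\|_{L^2}^2$ plus a controlled $\|\nabla\varphi\|_{L^2}^2$ error via Young.

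Because the cross term $\chi_\varphi \sigma(1-\varphi)$ sitting under $\tfrac{d}{dt}$ is sign-indefinite, one cannot directly apply Gronwall pointwise in time. Instead, I would integrate \eqref{Intro:energy:apriori} in time from $0$ to $s$ and then use Young's inequality,
\begin{align*}
|\chi_\varphi \sigma(1-\varphi)| \le \tfrac{\chi_\sigma}{4}|\sigma|^2 + \tfrac{\chi_\varphi^2}{\chi_\sigma}|1-\varphi|^2 \le \tfrac{\chi_\sigma}{4}|\sigma|^2 + \tfrac{2\chi_\varphi^2}{\chi_\sigma}(1 + |\varphi|^2),
\end{align*}
to absorb the negative contribution. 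Together with the coercivity \eqref{assump:Psi}, i.e.\ $R_1\|\varphi\|_{L^2}^2 \le \|\Psi(\varphi)\|_{L^1} + R_2|\Omega|$, the term $\tfrac{2\chi_\varphi^2}{\chi_\sigma R_1}\|\Psi(\varphi)\|_{L^1}$ appears against the leading $A\|\Psi(\varphi)\|_{L^1}$ from $A\int\Psi(\varphi)\,dx$, so the condition \eqref{assump:constantsrelations} is precisely what makes the resulting coefficient $A - \tfrac{2\chi_\varphi^2}{\chi_\sigma R_1}$ strictly positive. This is the crux of the argument and the main technical obstacle: without \eqref{assump:constantsrelations}, no amount of Young refinement would produce a positive coefficient on $\|\Psi(\varphi)\|_{L^1}$ at this step.

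Having reached \eqref{Intro:energy:apriori:Gronwall} with strictly positive leading coefficients on $\|\Psi(\varphi(s))\|_{L^1}$, $\|\nabla\varphi(s)\|_{L^2}^2$, $\|\sigma(s)\|_{L^2}^2$ and on the time-integrated dissipation $\|\nabla\mu\|_{L^2(0,s;L^2)}^2 + \|\nabla\sigma\|_{L^2(0,s;L^2)}^2 + \|\sigma\|_{L^2(0,s;L^2(\Gamma))}^2$, and with lower-order right-hand side of the form $\int_0^s \beta(t) u(t)\,dt$ for $u := \|\Psi(\varphi)\|_{L^1} + \|\sigma\|_{L^2}^2$ and $\beta$ a bounded function of the parameters, I would invoke Lemma \ref{lem:GronwallInt}. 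This yields a uniform bound on $u(s)$ together with control of the time-integrated dissipation by $\int_0^s v(t)\,dt$, which is exactly \eqref{apriori:main}. The constant $\overline{C}$ depends only on the quantities listed in the statement because $\alpha(s)$ is a fixed combination of the initial data $\|\varphi_0\|_{H^1}$, $\|\sigma_0\|_{L^2}$, the boundary data $\|\sigma_\infty\|_{L^2(0,T;L^2(\Gamma))}$, and $|\Omega|$, $|\Gamma|$, $T$.
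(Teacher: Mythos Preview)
Your proposal is correct and follows essentially the same route as the paper's proof: test with $\mu$, $\partial_t\varphi$, and $N_{,\sigma}$, estimate the mean of $\mu$ via $\eqref{assump:Psi}_2$ and Poincar\'e, control source and boundary terms with Young and the trace theorem, integrate in time, absorb the sign-indefinite cross term $\chi_\varphi\sigma(1-\varphi)$ using \eqref{assump:constantsrelations}, and close with Lemma~\ref{lem:GronwallInt}. The only minor slip is that the Gronwall quantity $u$ must also include $\norm{\nabla\varphi}_{L^2}^2$ (it appears on the right from the trace estimate on $\norm{\varphi}_{L^2(\Gamma)}$ and from converting $\norm{\nabla N_{,\sigma}}_{L^2}^2$ to $\norm{\nabla\sigma}_{L^2}^2$), exactly as in the paper's choice of $u(s)=\norm{\Psi(\varphi(s))}_{L^1}+\norm{\nabla\varphi(s)}_{L^2}^2+\norm{\sigma(s)}_{L^2}^2$.
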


\begin{proof}
Let
\begin{align}\label{defn:initialenergy}
c_{0} := \int_{\Omega} \left [ A \Psi(\varphi_{0}) + \frac{B}{2} \abs{\nabla \varphi_{0}}^{2} + \frac{\chi_{\sigma}}{2} \abs{\sigma_{0}}^{2} + \chi_{\varphi} \sigma_{0}(1-\varphi_{0}) \right ] \dx
\end{align}
denote the initial energy.  Then, by the assumption on the $\varphi_{0}$ and $\sigma_{0}$, H\"{o}lder's inequality and Young's inequality we see that $c_{0}$ is bounded.

Substituting $\zeta = \mu$, $\phi = \pd_{t}\varphi$, and $\xi = \chi_{\sigma} \sigma + \chi_{\varphi}(1-\varphi) = N_{,\sigma}$ into \eqref{CHNutrient:truncated:weakform} and adding the resulting equations together, we obtain
\begin{equation}
\label{apriori:totalenergymean}
\begin{aligned}
&  \frac{\dd}{\dt} \int_{\Omega}  \left [ A \Psi(\varphi) + \frac{B}{2} \abs{\nabla \varphi}^{2} + \frac{\chi_{\sigma}}{2} \abs{\sigma}^{2} + \chi_{\varphi} \sigma(1-\varphi) \right ] \dx \\
& \quad + \int_{\Omega} m(\varphi) \abs{\nabla \mu}^{2} + n(\varphi) \abs{\chi_{\sigma}\nabla \sigma - \chi_{\varphi} \nabla \varphi}^{2} \dx + \int_{\Gamma} K \chi_{\sigma} \abs{\sigma}^{2} \dHaus \\
& \quad + \int_{\Omega}  h(\varphi) \left ( \lambda_{c} \sigma (\chi_{\sigma} \sigma + \chi_{\varphi}(1-\varphi)) - (\lambda_{p} \sigma - \lambda_{a}) \mu \right ) \dx \\
& \quad -  \int_{\Gamma} K (\chi_{\sigma} \sigma + \chi_{\varphi}(1-\varphi)) \sigma_{\infty} - K \chi_{\varphi}(1-\varphi)\sigma \dHaus = 0.
\end{aligned}
\end{equation}
We first estimate the mean $\mean{\mu}$ using \eqref{Truncated:mu:weak} by considering $\phi = 1$ and using the growth condition \eqref{assump:Psi}, leading to
\begin{align*}
\norm{\mean{\mu}}_{L^{2}}^{2} & = \abs{\mean{\mu}}^{2} \abs{\Omega} = \abs{\Omega}^{-1} \abs{\int_{\Omega} A \Psi'(\varphi) - \chi_{\varphi} \sigma \dx}^{2} \\
& \leq \abs{\Omega}^{-1} \left ( A R_{3} \abs{\Omega} + A R_{3} \norm{\varphi}_{L^{2}} \abs{\Omega}^{\frac{1}{2}} + \chi_{\varphi} \norm{\sigma}_{L^{2}} \abs{\Omega}^{\frac{1}{2}} \right )^{2} \\
& \leq 3 \abs{\Omega}^{-1} \left ( A^{2} R_{3}^{2} \abs{\Omega}^{2} + A^{2} R_{3}^{2} \norm{\varphi}_{L^{2}}^{2} \abs{\Omega} + \chi_{\varphi}^{2} \norm{\sigma}_{L^{2}}^{2} \abs{\Omega} \right ).
\end{align*}
Employing the Poincar\'{e} inequality \eqref{regular:Poincare} we have
\begin{equation}\label{L2norm:mu:Poincare}
\begin{aligned}
\norm{\mu}_{L^{2}}^{2} & \leq 2 C_{\mathrm{P}}^{2} \norm{\nabla \mu}_{L^{2}}^{2} + 2 \norm{\mean{\mu}}_{L^{2}}^{2} \\
& \leq  2 C_{\mathrm{P}}^{2} \norm{\nabla \mu}_{L^{2}}^{2} +  6  \left ( A^{2} R_{3}^{2} \abs{\Omega} + A^{2} R_{3}^{2} \norm{\varphi}_{L^{2}}^{2} + \chi_{\varphi}^{2} \norm{\sigma}_{L^{2}}^{2} \right ).
\end{aligned}
\end{equation}
Then, by H\"{o}lder's inequality and Young's inequality, we can estimate the source term involving $\mu$ as follows,
\begin{equation}\label{apriori:musourceterms}
\begin{aligned}
& \abs{ \int_{\Omega}  - h(\varphi) (\lambda_{p} \sigma - \lambda_{a}) \mu \dx}  \leq h_{\infty} \left ( \lambda_{p} \norm{\sigma}_{L^{2}} + \lambda_{a} \abs{\Omega}^{\frac{1}{2}} \right ) \norm{\mu}_{L^{2}} \\
& \quad  \leq  \frac{h_{\infty}^{2} \lambda_{p}^{2}}{4 a_{1}} \norm{\sigma}_{L^{2}}^{2} + C(a_{2}, \lambda_{a}, h_{\infty}, \abs{\Omega}) + (a_{1} + a_{2}) \norm{\mu}_{L^{2}}^{2} \\
& \quad \leq  2C_{\mathrm{P}}^{2} (a_{1} + a_{2}) \norm{\nabla \mu}_{L^{2}}^{2} + C( a_{1}, a_{2}, \lambda_{a}, h_{\infty}, \abs{\Omega}, A, R_{3}) \\
& \quad +  \left ( \frac{h_{\infty}^{2} \lambda_{p}^{2}}{4 a_{1}} + 6 (a_{1} + a_{2}) \chi_{\varphi}^{2} \right ) \norm{\sigma}_{L^{2}}^{2} + 6 A^{2} R_{3}^{2} (a_{1} + a_{2}) \norm{\varphi}_{L^{2}}^{2},
\end{aligned}
\end{equation}
for some positive constants $a_{1}$ and $a_{2}$ yet to be determined.  For the term involving $\lambda_{c}$, we obtain from H\"{o}lder's inequality and Young's inequality
\begin{equation}\label{apriori:lambdacterm}
\begin{aligned}
& \abs{\int_{\Omega} \lambda_{c} h(\varphi) \sigma (\chi_{\sigma} \sigma + \chi_{\varphi}(1-\varphi)) \dx} \\
& \leq \lambda_{c} h_{\infty} \left ( \chi_{\sigma} \norm{\sigma}_{L^{2}}^{2} + \chi_{\varphi} \norm{\varphi}_{L^{2}}\norm{\sigma}_{L^{2}} + \chi_{\varphi} \norm{\sigma}_{L^{1}} \right ) \\
& \leq \lambda_{c} h_{\infty} \left (  \chi_{\sigma} + a_{4} + \frac{a_{3}\chi_{\varphi}}{2} \right ) \norm{\sigma}_{L^{2}}^{2} + \lambda_{c} h_{\infty} \frac{\chi_{\varphi}}{2a_{3}} \norm{\varphi}_{L^{2}}^{2} + C(\abs{\Omega}, \lambda_{c}, h_{\infty}, \chi_{\sigma}, \chi_{\varphi}, a_{4}), 
\end{aligned}
\end{equation}
for some positive constants $a_{3}$ and $ a_{4}$ yet to be determined.  For the terms involving the boundary integral, we have by H\"{o}lder's inequality, Young's inequality and the trace theorem,
\begin{equation}\label{apriori:boundaryterm:tracethm}
\begin{aligned}
& \abs{\int_{\Gamma} \chi_{\varphi}(1-\varphi) \sigma - \chi_{\sigma} \sigma \sigma_{\infty} - \chi_{\varphi}(1-\varphi) \sigma_{\infty} \dHaus} \\
& \leq  \chi_{\varphi} \left ( \norm{\sigma}_{L^{1}(\Gamma)} + \norm{\varphi}_{L^{2}(\Gamma)} \norm{\sigma}_{L^{2}(\Gamma)} \right ) + \chi_{\sigma} \norm{\sigma}_{L^{2}(\Gamma)} \norm{\sigma_{\infty}}_{L^{2}(\Gamma)} \\
& + \chi_{\varphi} \norm{\sigma_{\infty}}_{L^{1}(\Gamma)} + \chi_{\varphi} \norm{\varphi}_{L^{2}(\Gamma)} \norm{\sigma_{\infty}}_{L^{2}(\Gamma)} \\
& \leq \left( a_{5} + \frac{\chi_{\sigma}}{2} \right ) \norm{\sigma}_{L^{2}(\Gamma)}^{2} + \left ( \frac{\chi_{\varphi}^{2}}{2 \chi_{\sigma}} + a_{6} \right ) \norm{\varphi}_{L^{2}(\Gamma)}^{2} + C(a_{5}, a_{6}, \chi_{\varphi}, \chi_{\sigma}, \abs{\Gamma}) \left (1 + \norm{\sigma_{\infty}}_{L^{2}(\Gamma)}^{2} \right ) \\
& \leq \left (  a_{5} + \frac{\chi_{\sigma}}{2}\right ) \norm{\sigma}_{L^{2}(\Gamma)}^{2}  + C_{\mathrm{tr}}^{2} \left (  \frac{\chi_{\varphi}^{2}}{2 \chi_{\sigma}} + a_{6} \right ) \norm{\varphi}_{H^{1}}^{2} + C \left (1 + \norm{\sigma_{\infty}}_{L^{2}(\Gamma)}^{2} \right ),
\end{aligned}
\end{equation}
for some positive constants $a_{5}$ and $a_{6}$ yet to be determined.  Here, $C_{\mathrm{tr}}$ is the constant from the trace theorem which depends only on $\Omega$ and $d$,
\begin{align*}
\norm{f}_{L^{2}(\Gamma)} \leq C_{\mathrm{tr}} \norm{f}_{H^{1}} \quad \forall f \in H^{1}.
\end{align*}
Employing the estimates \eqref{apriori:musourceterms}, \eqref{apriori:lambdacterm}, and \eqref{apriori:boundaryterm:tracethm}, and using the lower bounds of $m(\cdot)$ and $n(\cdot)$, we  obtain from \eqref{apriori:totalenergymean}
\begin{equation}\label{apriori:totalenergymean:2}
\begin{aligned}
& \frac{\dd}{\dt} \int_{\Omega} \left [ A \Psi(\varphi) + \frac{B}{2} \abs{\nabla \varphi}^{2} + \frac{\chi_{\sigma}}{2} \abs{\sigma}^{2} + \chi_{\varphi} \sigma (1-\varphi) \right ] \dx \\
& \quad + \int_{\Omega} \left ( m_{0} - 2C_{\mathrm{P}}^{2} (a_{1} + a_{2}) \right ) \abs{\nabla \mu}^{2} + n_{0} \abs{\chi_{\sigma} \nabla \sigma - \chi_{\varphi} \nabla \varphi}^{2} \dx \\
& \quad + K \int_{\Gamma} \left ( \chi_{\sigma} -  a_{5} - \frac{\chi_{\sigma}}{2} \right ) \abs{\sigma}^{2} \dHaus - K \int_{\Omega} C_{\mathrm{tr}}^{2} \left ( \frac{\chi_{\varphi}^{2}}{2 \chi_{\sigma}} + a_{6} \right ) \abs{\nabla \varphi}^{2} \dx \\
& \quad  -  \int_{\Omega} \left ( \frac{h_{\infty}^{2} \lambda_{p}^{2}}{4 a_{1}} + 6 (a_{1} + a_{2}) \chi_{\varphi}^{2} + \lambda_{c} h_{\infty} \left (  \chi_{\sigma} + a_{4} + \frac{a_{3}\chi_{\varphi}}{2} \right )\right ) \abs{\sigma}^{2} \dx \\
& \quad -  \int_{\Omega} \left ( 6 A^{2} R_{3}^{2} (a_{1} + a_{2}) + \lambda_{c} h_{\infty} \frac{\chi_{\varphi}}{2a_{3}} + K C_{\mathrm{tr}}^{2} \left (  \frac{\chi_{\varphi}^{2}}{2 \chi_{\sigma}} + a_{6} \right )   \right ) \abs{\varphi}^{2} \dx \\
& \quad \leq  C \left ( 1 + \norm{\sigma_{\infty}}_{L^{2}(\Gamma)}^{2} \right ),
\end{aligned}
\end{equation}
where $C$ is independent of $\varphi$, $\sigma$ and $\mu$.  By the triangle inequality, Minkowski's inequality and Young's inequality, we see that
\begin{align}\label{nablasigmaL2normFromNSigma}
\norm{\chi_{\sigma} \nabla \sigma}_{L^{2}}^{2} \leq \left ( \norm{\nabla N_{,\sigma}}_{L^{2}} + \norm{\chi_{\varphi} \nabla \varphi}_{L^{2}} \right )^{2} \leq 2 \norm{\nabla N_{,\sigma}}_{L^{2}}^{2} + 2 \norm{\chi_{\varphi} \nabla \varphi}_{L^{2}}^{2}.
\end{align}
We now choose the constants $\{a_{i}\}_{i = 1}^{6}$ to be
\begin{align*}
a_{1} = a_{2} = \frac{m_{0}}{8 C_{\mathrm{P}}^{2}}, \; a_{5} = \frac{\chi_{\sigma}}{4}, \; a_{3} = a_{4} = a_{6} = 1,
\end{align*}
and write
\begin{align*}
c_{1} &:= \frac{m_{0}}{2}, \quad c_{2} := K \frac{\chi_{\sigma}}{4}, \quad c_{3} :=  KC_{\mathrm{tr}}^{2} \left ( \frac{\chi_{\varphi}^{2}}{2 \chi_{\sigma}} + 1 \right ) + \chi_{\varphi}^{2} n_{0}, \\
c_{4} & := \frac{2 h_{\infty}^{2} \lambda_{p}^{2} C_{\mathrm{P}}^{2}}{m_{0}} + \frac{3 m_{0}}{2 C_{\mathrm{P}}^{2}}\chi_{\varphi}^{2} + \lambda_{c} h_{\infty} \left (  \chi_{\sigma} + 1 + \frac{\chi_{\varphi}}{2} \right ), \\
c_{5} & :=  \frac{3 m_{0}}{2 C_{\mathrm{P}}^{2}} A^{2} R_{3}^{2}  + \lambda_{c} h_{\infty} \frac{\chi_{\varphi}}{2} + K C_{\mathrm{tr}}^{2} \left (  \frac{\chi_{\varphi}^{2}}{2 \chi_{\sigma}} + 1 \right ),
\end{align*}
where the additional $\chi_{\varphi}^{2} n_{0}$ in the constant $c_{3}$ comes from \eqref{nablasigmaL2normFromNSigma}.  Then \eqref{apriori:totalenergymean:2} becomes
\begin{equation}\label{apriori:totaltimeenergymean:sim}
\begin{aligned}
& \frac{\dd}{\dt} \int_{\Omega} \left [ A \Psi(\varphi) + \frac{B}{2} \abs{\nabla \varphi}^{2} + \frac{\chi_{\sigma}}{2} \abs{\sigma}^{2} + \chi_{\varphi} \sigma (1-\varphi) \right ] \dx \\
& \quad + \int_{\Omega} c_{1} \abs{\nabla \mu}^{2} + \frac{n_{0} \chi_{\sigma}^{2}}{2} \abs{\nabla \sigma}^{2} \dx  + \int_{\Gamma} c_{2} \abs{\sigma}^{2} \dHaus \\
& \quad - \int_{\Omega} c_{4} \abs{\sigma}^{2} + c_{5} \abs{\varphi}^{2} + c_{3} \abs{\nabla \varphi}^{2} \dx  \leq C \left (1 + \norm{\sigma_{\infty}}_{L^{2}(\Gamma)}^{2} \right ).
\end{aligned}
\end{equation}
Integrating \eqref{apriori:totaltimeenergymean:sim} with respect to $t$ from $0$ to $s \in (0,T]$ gives
\begin{equation}\label{apriori:totaltimeenergymean:int}
\begin{aligned}
&  \int_{\Omega} \left [ A \Psi(\varphi(x,s)) + \frac{B}{2} \abs{\nabla \varphi(x, s)}^{2} + \frac{\chi_{\sigma}}{2} \abs{\sigma(x, s)}^{2} + \chi_{\varphi} \sigma(x, s)(1-\varphi(x,s)) \right ] \dx \\
& \quad + c_{1} \norm{\nabla \mu}_{L^{2}(0,s;L^{2})}^{2} + \frac{n_{0} \chi_{\sigma}^{2}}{2}  \norm{\nabla \sigma}_{L^{2}(0,s;L^{2})}^{2} +  c_{2} \norm{\sigma}_{L^{2}(0,s;L^{2}(\Gamma))}^{2} \\
& \quad - c_{4} \norm{\sigma}_{L^{2}(0,s;L^{2})}^{2} - c_{5} \norm{\varphi}_{L^{2}(0,s;L^{2})}^{2} - c_{3} \norm{\nabla \varphi}_{L^{2}(0,s;L^{2})}^{2} \\
& \quad \leq c_{0} + C \left (s + \norm{\sigma_{\infty}}_{L^{2}(0,s;L^{2}(\Gamma))}^{2} \right ),
\end{aligned}
\end{equation}
where the constant $c_{0}$ is defined in \eqref{defn:initialenergy}.  By H\"{o}lder's inequality and Young's inequality, we have
\begin{equation}\label{sigmavarphicrossterm}
\begin{aligned}
\abs{\int_{\Omega} \chi_{\varphi} \sigma (1-\varphi) \dx} & \leq \chi_{\varphi} \norm{\sigma}_{L^{1}} + \chi_{\varphi} \norm{\sigma}_{L^{2}} \norm{\varphi}_{L^{2}} \\
& \leq \frac{\chi_{\sigma}}{8} \norm{\sigma}_{L^{2}}^{2} + C(\chi_{\sigma}, \abs{\Omega}, \chi_{\varphi}) + \frac{\chi_{\sigma}}{8} \norm{\sigma}_{L^{2}}^{2} + \frac{2\chi_{\varphi}^{2}}{\chi_{\sigma}} \norm{\varphi}_{L^{2}}^{2},
\end{aligned}
\end{equation}
and thus from \eqref{apriori:totaltimeenergymean:int} we deduce that
\begin{equation}\label{apriori:totalenergymean:int:2}
\begin{aligned}
& A  \norm{\Psi(\varphi(s))}_{L^{1}} + \frac{B}{2} \norm{\nabla \varphi(s)}_{L^{2}}^{2} + \frac{\chi_{\sigma}}{4} \norm{\sigma(s)}_{L^{2}}^{2} - \frac{2\chi_{\varphi}^{2}}{\chi_{\sigma}} \norm{\varphi(s)}_{L^{2}}^{2} \\
& \quad + c_{1} \norm{\nabla \mu}_{L^{2}(0,s;L^{2})}^{2} + \frac{n_{0} \chi_{\sigma}^{2}}{2} \norm{\nabla \sigma}_{L^{2}(0,s;L^{2})}^{2} + c_{2} \norm{\sigma}_{L^{2}(0,s;L^{2}(\Gamma))}^{2} \\
& \quad - c_{4} \norm{\sigma}_{L^{2}(0,s;L^{2})}^{2} - c_{5} \norm{\varphi}_{L^{2}(0,s;L^{2})}^{2} - c_{3} \norm{\nabla \varphi}_{L^{2}(0,s;L^{2})}^{2} \\
& \quad \leq c_{0} + C \left (1 + T + \norm{\sigma_{\infty}}_{L^{2}(0,T;L^{2}(\Gamma))}^{2} \right ).
\end{aligned}
\end{equation}
Now, by \eqref{assump:Psi}, we have
\begin{align}\label{varphiL2PsiL1}
\norm{\varphi}_{L^{2}}^{2} = \int_{\Omega} \abs{\varphi}^{2} \dx \leq \frac{1}{R_{1}} \left ( \int_{\Omega} \Psi(\varphi) \dx + R_{2} \abs{\Omega} \right ) = \frac{1}{R_{1}} \norm{\Psi(\varphi)}_{L^{1}} + \frac{R_{2}}{R_{1}} \abs{\Omega},
\end{align}
and, for any $s \in (0,T]$,
\begin{align}\label{varphiL2L2PsiL1L1}
\norm{\varphi}_{L^{2}(0,s;L^{2})}^{2} \leq \frac{1}{R_{1}} \norm{\Psi(\varphi)}_{L^{1}(0,s;L^{1})} + \frac{R_{2}}{R_{1}} \abs{\Omega} s.
\end{align}
Thus, using \eqref{varphiL2PsiL1} and \eqref{varphiL2L2PsiL1L1}, we obtain from \eqref{apriori:totalenergymean:int:2}
\begin{equation}\label{apriori:totalenergymean:int:3}
\begin{aligned}
& \left (A - \frac{2 \chi_{\varphi}^{2}}{\chi_{\sigma} R_{1}} \right )  \norm{\Psi(\varphi(s))}_{L^{1}} +  \frac{B}{2} \norm{\nabla \varphi(s)}_{L^{2}}^{2} + \frac{\chi_{\sigma}}{4} \norm{\sigma(s)}_{L^{2}}^{2} \\
& \quad - \frac{c_{5}}{R_{1}}  \norm{\Psi(\varphi(s))}_{L^{1}(0,s;L^{1})} -  c_{3} \norm{\nabla \varphi}_{L^{2}(0,s;L^{2})}^{2} - c_{4} \norm{\sigma}_{L^{2}(0,s;L^{2})}^{2} \\
& \quad + c_{1} \norm{\nabla \mu}_{L^{2}(0,s:L^{2})}^{2} + \frac{n_{0} \chi_{\sigma}^{2}}{2} \norm{\nabla \sigma}_{L^{2}(0,s;L^{2})}^{2} +  c_{2} \norm{\sigma}_{L^{2}(0,s;L^{2}(\Gamma))}^{2} \\
& \quad \leq C \left (1 + T + \norm{\sigma_{\infty}}_{L^{2}(0,T;L^{2}(\Gamma))}^{2} \right ) =: c_{*},
\end{aligned}
\end{equation}
for some positive constant $c_{*}$ independent of $s \in (0,T]$, $\mu(s)$, $\sigma(s)$, and $\varphi(s)$.  Let 
\begin{align*}
c_{\min}  := \min \left ( A - \frac{2 \chi_{\varphi}^{2}}{\chi_{\sigma}R_{1}} , \frac{B}{2},\frac{\chi_{\sigma}}{4} \right ), \quad c_{\max}  := \max (c_{5}/ R_{1}, c_{3}, c_{4} ).
\end{align*}
Then, $c_{\min} > 0$ by assumption (see \eqref{assump:constantsrelations}), and we obtain from \eqref{apriori:totalenergymean:int:3} that
\begin{equation}\label{apriori:totalenergymean:int:4}
\begin{aligned}
& c_{\min}  \left ( \norm{\Psi(\varphi(s))}_{L^{1}} + \norm{\nabla \varphi(s)}_{L^{2}}^{2} + \norm{\sigma(s)}_{L^{2}}^{2} \right ) \\
& \quad + c_{1} \norm{\nabla \mu}_{L^{2}(0,s;L^{2})}^{2} + \frac{n_{0} \chi_{\sigma}^{2}}{2} \norm{\nabla \sigma }_{L^{2}(0,s;L^{2})}^{2} +  c_{2} \norm{\sigma}_{L^{2}(0,s;L^{2}(\Gamma))}^{2} \\
& \quad \leq \int_{0}^{s} c_{\max} \left ( \norm{\Psi(\varphi)}_{L^{1}} + \norm{\nabla \varphi}_{L^{2}}^{2} + \norm{\sigma}_{L^{2}}^{2} \right ) \dt + c_{*}.
\end{aligned}
\end{equation}
Substituting
\begin{align*}
u(s) & = \norm{\Psi(\varphi(s))}_{L^{1}} + \norm{\nabla \varphi(s)}_{L^{2}}^{2} + \norm{\sigma(s)}_{L^{2}}^{2}, \\
v(t) & = \frac{1}{c_{\min}} \left ( c_{1} \norm{\nabla \mu}_{L^{2}}^{2} + \frac{n_{0} \chi_{\sigma}^{2}}{2} \norm{\nabla \sigma}_{L^{2}}^{2} +  c_{2} \norm{\sigma}_{L^{2}(\Gamma)}^{2} \right ), \\
\alpha(s) & = \frac{c_{*}}{c_{\min}}, \quad \beta(t) = \frac{c_{\max}}{c_{\min}}
\end{align*}
into Lemma \ref{lem:GronwallInt}, we obtain from \eqref{apriori:totalenergymean:int:4}
\begin{equation}\label{apriori:Linftybddintime}
\begin{aligned}
& \norm{\Psi(\varphi(s))}_{L^{1}} + \norm{\nabla \varphi(s)}_{L^{2}}^{2} + \norm{\sigma(s)}_{L^{2}}^{2}  \\
& \quad + \frac{1}{c_{\min}} \left ( c_{1} \norm{\nabla \mu}_{L^{2}(0,s;L^{2})}^{2} + \frac{n_{0} \chi_{\sigma}^{2}}{2} \norm{\nabla \sigma}_{L^{2}(0,s;L^{2})}^{2} +  c_{2} \norm{\sigma}_{L^{2}(0,s;L^{2}(\Gamma))}^{2} \right ) \\
& \quad \leq \frac{c_{*}}{c_{\min}} + \int_{0}^{s}  \frac{c_{*} c_{\max}}{c_{\min}^{2}} \exp \left ( \frac{c_{\max}}{c_{\min}}(s-t) \right ) \dt < \infty \quad \forall s \in (0,T].
\end{aligned}
\end{equation}
Together with \eqref{varphiL2PsiL1}, we find that there exists a positive constant $\overline{C}$ not depending on $\varphi$, $\mu$ and $\sigma$ such that
\begin{equation}
\begin{aligned}
& \norm{\Psi(\varphi(s))}_{L^{1}} + \norm{\varphi(s)}_{H^{1}}^{2} + \norm{\sigma(s)}_{L^{2}}^{2} \\
& \quad + \norm{\nabla \mu}_{L^{2}(0,s;L^{2})}^{2} + \norm{\nabla \sigma}_{L^{2}(0,s;L^{2})}^{2} + \norm{\sigma}_{L^{2}(0,s;L^{2}(\Gamma))}^{2} \leq \overline{C},
\end{aligned}
\end{equation}
for all $s \in (0,T]$.
\end{proof}

\begin{remark}\label{remark:constantrelations}
The necessity of \eqref{assump:constantsrelations} comes from the fact that in \eqref{apriori:totaltimeenergymean:sim}, we cannot apply H\"{o}lder's inequality and Young's inequality like in \eqref{sigmavarphicrossterm} to estimate the term 
\begin{align*}
\frac{\dd}{\dt} \int_{\Omega} \chi_{\varphi} \sigma(1-\varphi) \dx,
\end{align*}
as inequalities are not preserved under differentiation.  
\end{remark}

\section{Global weak solutions}\label{sec:Existence}
\subsection{Galerkin approximation}\label{sec:Galerkinapprox}
We obtain global weak solutions via a suitable Galerkin procedure.  Consider a basis $\{ w_{i} \}_{i \in \N}$ of $H^{1}$ which is orthonormal with respect to the $L^{2}$-inner product, and, without loss of generality, we assume $w_{1}$ is constant and hence $\int_{\Omega} w_{i} \dx = 0$ for all $i \geq 2$.  In the following we take $\{w_{i}\}_{i \in \N}$ to be eigenfunctions for the Laplacian with homogeneous Neumann boundary conditions,
\begin{subequations}\label{NeuLaplace}
\begin{alignat}{2}
-\Laplace w_{i} & = \Lambda_{i} w_{i} && \text{ in } \Omega, \\
\nabla w_{i} \cdot \nu & = 0 && \text{ on } \Gamma,
\end{alignat}
\end{subequations}
where $\Lambda_{i}$ is the eigenvalue corresponding to $w_{i}$.  It is well-known that the $\{w_{i}\}_{i \in \N}$ can be chosen as an orthonormal basis of $L^{2}$ and then forms an orthogonal basis of $H^{1}$.  As constant functions are eigenfunctions, $w_{1}$ can be chosen as a constant function with $\Lambda_{1} = 0$ (see for instance \cite[Theorem 8.4]{book:Salsa}).  Let
\begin{align*}
W_{k} := \mathrm{span} \{ w_{1}, \dots, w_{k} \} \subset H^{1}
\end{align*}
denote the finite dimensional space spanned by the first $k$ basis functions.  We now consider
\begin{subequations}\label{Galerkin:ansatz}
\begin{align}
\varphi_{k}(t,x) =\sum_{i=1}^{k} \alpha_{i}^{k}(t) w_{i}(x), \; \mu_{k}(t,x) = \sum_{i=1}^{k} \beta_{i}^{k}(t) w_{i}(x), \; \sigma_{k}(t,x) = \sum_{i=1}^{k} \gamma_{i}^{k}(t) w_{i}(x),
\end{align}
\end{subequations}
and the following Galerkin ansatz,
\begin{subequations}\label{Galerkin:system}
\begin{align}
\int_{\Omega} \pd_{t} \varphi_{k} w_{j} \dx & = \int_{\Omega} - m(\varphi_{k}) \nabla \mu_{k} \cdot \nabla w_{j} + (\lambda_{p} \sigma_{k} - \lambda_{a}) h(\varphi_{k}) w_{j} \dx, \label{Galerkin:varphi} \\
\int_{\Omega} \mu_{k} w_{j} \dx & = \int_{\Omega} A \Psi'(\varphi_{k}) w_{j} + B \nabla \varphi_{k} \cdot \nabla w_{j} - \chi_{\varphi} \sigma_{k} w_{j} \dx,  \label{Galerkin:mu} \\
\int_{\Omega} \pd_{t} \sigma_{k} w_{j} \dx & = \int_{\Omega} -n(\varphi_{k}) (\chi_{\sigma} \nabla \sigma_{k} - \chi_{\varphi} \nabla \varphi_{k}) \cdot \nabla w_{j} - \lambda_{c} \sigma_{k} h(\varphi_{k}) w_{j} \dx  \label{Galerkin:sigma} \\
\notag & + \int_{\Gamma} K ( \sigma_{\infty} - \sigma_{k}) w_{j} \dHaus,
\end{align}
\end{subequations}
for $1 \leq j \leq k$.  We define the following symmetric matrices with components
\begin{equation*}
\begin{alignedat}{3}
(\bm{M}_{h}^{k})_{ji} & := \int_{\Omega} h(\varphi_{k}) w_{i} w_{j} \dx, &&\quad (\bm{M}_{\Gamma})_{ji} && := \int_{\Gamma} w_{i} w_{j} \dHaus, \\
(\bm{S}_{m}^{k})_{ji} & := \int_{\Omega} m(\varphi_{k}) \nabla w_{i} \cdot \nabla w_{j} \dx, && \quad (\bm{S}_{n}^{k})_{ji} &&:= \int_{\Omega} n(\varphi_{k}) \nabla w_{i} \cdot \nabla w_{j} \dx,
\end{alignedat}
\end{equation*}
for $1 \leq i, j \leq k$.  Let $\delta_{ij}$ denote the Kronecker delta, and we introduce the notation
\begin{equation*}
\begin{alignedat}{5}
\psi_{j}^{k} & := \int_{\Omega} \Psi'(\varphi_{k})w_{j} \dx , && \quad \Sigma_{j}^{k} && := \int_{\Gamma} \sigma_{\infty} w_{j} \dHaus, && \quad h_{j}^{k} := \int_{\Omega} h(\varphi_{k}) w_{j} \dx, \\
\bm{\psi}^{k} & := (\psi_{1}^{k}, \dots, \psi_{k}^{k})^{\top}, && \quad \bm{\Sigma}^{k} && := (\Sigma_{1}^{k}, \dots, \Sigma_{k}^{k})^{\top}, && \quad \bm{h}^{k} := (h_{1}^{k}, \dots, h_{k}^{k})^{\top}, \\
\bm{M}_{ij} & = \int_{\Omega} w_{i} w_{j} \dx = \delta_{ij}, && \quad \bm{S}_{ij} && := \int_{\Omega} \nabla w_{i} \cdot \nabla w_{j} \dx, && \quad &&
\end{alignedat}
\end{equation*}
for $1 \leq i,j \leq k$, so that we obtain the following initial value problem for a system of ordinary differential equations for $\bm{\alpha}_{k} := (\alpha_{1}^{k}, \dots \alpha_{k}^{k})^{\top}$, $\bm{\beta}_{k} := (\beta_{1}^{k}, \dots, \beta_{k}^{k})^{\top}$, and $\bm{\gamma}_{k} := (\gamma_{1}^{k}, \dots, \gamma_{k}^{k})^{\top}$,
\begin{subequations}\label{discrete:system1}
\begin{align}
\frac{\dd}{\dt} \bm{\alpha}_{k} & = - \bm{S}_{m}^{k} \bm{\beta}_{k} + \lambda_{p} \bm{M}_{h}^{k} \bm{\gamma}_{k} - \lambda_{a} \bm{h}^{k}, \label{discrete:varphi} \\
\bm{\beta}_{k} & = A \bm{\psi}^{k} + B\bm{S} \bm{\alpha} - \chi_{\varphi} \bm{\gamma}_{k}, \label{discrete:mu} \\
\frac{\dd}{\dt} \bm{\gamma}_{k} & = - \bm{S}_{n}^{k}(\chi_{\sigma} \bm{\gamma}_{k} - \chi_{\varphi} \bm{\alpha}_{k}) - \lambda_{c} \bm{M}_{h}^{k} \bm{\gamma}_{k} - K\bm{M}_{\Gamma}\bm{\gamma}_{k} + K \bm{\Sigma}^{k}.  \label{discrete:sigma}
\end{align}
\end{subequations}
Substituting \eqref{discrete:mu} into \eqref{discrete:varphi}, we obtain
\begin{subequations}\label{discrete:system:combined}
\begin{align}
\frac{\dd}{\dt} \bm{\alpha}_{k} & = - \bm{S}_{m}^{k}  (A \bm{\psi}^{k} + B \bm{S} \bm{\alpha}_{k} - \chi_{\varphi} \bm{\gamma}_{k}) +  \lambda_{p} \bm{M}_{h}^{k} \bm{\gamma}_{k} - \lambda_{a} \bm{h}^{k}, \\
\frac{\dd}{\dt} \bm{\gamma}_{k} & = -  \bm{S}_{n}^{k}(\chi_{\sigma} \bm{\gamma}_{k} - \chi_{\varphi} \bm{\alpha}_{k}) - \lambda_{c} \bm{M}_{h}^{k} \bm{\gamma}_{k} - K \bm{M}_{\Gamma}\bm{\gamma}_{k} + K \bm{\Sigma}^{k}, 
\end{align}
\end{subequations}
and we complete \eqref{discrete:system:combined} with the initial conditions
\begin{align}\label{discrete:system:combined:initial}
(\bm{\alpha}_{k})_{j}(0) = \int_{\Omega} \varphi_{0} w_{j} \dx, \quad (\bm{\gamma}_{k})_{j}(0) = \int_{\Omega} \sigma_{0} w_{j} \dx \text{ for } 1 \leq j \leq k,
\end{align}
which satisfy
\begin{align*}
\bignorm{\sum_{i=1}^{k} (\bm{\alpha}_{k})_{i}(0) w_{i}}_{H^{1}} \leq \norm{\varphi_{0}}_{H^{1}}, \quad \bignorm{\sum_{j=1}^{k} (\bm{\gamma}_{k})_{i}(0) w_{i}}_{L^{2}} \leq \norm{\sigma_{0}}_{L^{2}} \quad \forall k \in \N.
\end{align*}
We remark that \eqref{discrete:system:combined} is a nonlinear ODE system and $\bm{S}_{m}^{k}$, $\bm{S}_{n}^{k}$, $\bm{\psi}^{k}$, $\bm{M}_{h}^{k}$ depend in a nonlinear way on the solution.  Continuity of $m(\cdot)$, $n(\cdot)$, $h(\cdot)$ and $\Psi'(\cdot)$ imply that the right-hand sides of \eqref{discrete:system:combined} depend continuously on $\bm{\alpha}_{k}$ and $\bm{\gamma}_{k}$.  Thus, we can appeal to the theory of ODEs (via the Cauchy--Peano theorem) to infer that the initial value problem \eqref{discrete:system:combined} has at least one local solution pair $(\bm{\alpha}_{k}, \bm{\gamma}_{k})$ defined on $[0,t_{k}]$ for each $k \in \N$.  

\subsection{A priori estimates}
Next, we show that $t_{k} = T$ for each $k \in \N$ by deriving a priori estimates.  By the Cauchy--Peano theorem, \eqref{discrete:mu}, and \eqref{Galerkin:ansatz}, we see that
\begin{align*}
\varphi_{k}, \sigma_{k} \in C^{1}([0,t_{k}];W_{k}), \quad \mu_{k} \in C^{0}([0,t_{k}]; W_{k}).
\end{align*}
We proceed similarly to the derivation of \eqref{apriori:main}.  Let $\delta_{ij}$ denote the Kronecker delta.  Multiplying \eqref{Galerkin:sigma} with $\chi_{\sigma} \gamma_{j}^{k} + \chi_{\varphi}( w_{1}^{-1} \delta_{1j}-\alpha_{j}^{k})$ and summing from $j=1$ to $k$ leads to
\begin{equation}
\label{discrete:apriori:sigma}
\begin{aligned}
& \int_{\Omega} \pd_{t} \sigma_{k} (\chi_{\sigma} \sigma_{k} + \chi_{\varphi} (1 - \varphi_{k})) + n(\varphi_{k}) \abs{\chi_{\sigma} \nabla \sigma_{k} - \chi_{\varphi} \nabla \varphi_{k}}^{2} \dx \\
& \quad = -\int_{\Omega} \lambda_{c} \sigma_{k} h(\varphi_{k})(\chi_{\sigma} \sigma_{k} + \chi_{\varphi}(1-\varphi_{k})) \dx \\
& \quad + \int_{\Gamma} K (\sigma_{\infty} - \sigma_{k})(\chi_{\sigma} \sigma_{k} + \chi_{\varphi}(1-\varphi_{k})) \dHaus.
\end{aligned}
\end{equation}
Here, we used that $w_{1}$ is constant, $\nabla w_{1} = \bm{0}$, and the linearity of the trace operator.  Next, we multiply \eqref{Galerkin:varphi} with $\beta_{j}^{k}$, and summing the product from $j = 1$ to $k$ leads to
\begin{align}\label{discrete:apriori:varphi}
& \; \int_{\Omega} (\pd_{t}\varphi_{k} - \lambda_{p} \sigma_{k} h(\varphi_{k}) + \lambda_{a} h(\varphi_{k})) \mu_{k}  + m(\varphi_{k}) \abs{\nabla \mu_{k}}^{2} \dx = 0.
\end{align}
Similarly, we multiply \eqref{Galerkin:mu} with $\frac{\dd}{\dt} \alpha_{j}^{k}$, and summing the product from $j = 1$ to $k$ gives
\begin{align}\label{discrete:apriori:mu}
0 = \int_{\Omega} (- \mu_{k} + A \Psi'(\varphi_{k}) - \chi_{\varphi} \sigma_{k} )\pd_{t} \varphi_{k} + B \nabla \varphi_{k} \cdot \nabla \pd_{t}\varphi_{k}  \dx.
\end{align}
Upon adding \eqref{discrete:apriori:sigma}, \eqref{discrete:apriori:varphi}, and \eqref{discrete:apriori:mu} we obtain
\begin{equation}
\label{discrete:apriori:1}
\begin{aligned}
& \frac{\dd}{\dt} \int_{\Omega} \left [ A \Psi(\varphi_{k}) + \frac{B}{2} \abs{\nabla \varphi_{k}}^{2} + \frac{\chi_{\sigma}}{2} \abs{\sigma_{k}}^{2} + \chi_{\varphi} \sigma_{k} (1-\varphi_{k}) \right ] \dx \\
& \quad + \int_{\Omega} m(\varphi_{k}) \abs{\nabla \mu_{k}}^{2} + n(\varphi_{k}) \abs{\chi_{\sigma} \nabla \sigma_{k} - \chi_{\varphi} \nabla \varphi_{k}}^{2} \dx + \int_{\Gamma} K \chi_{\sigma} \abs{\sigma_{k}}^{2}  \dHaus \\
& \quad + \int_{\Omega} \lambda_{c} \sigma_{k} h(\varphi_{k})(\chi_{\sigma} \sigma_{k} + \chi_{\varphi} (1-\varphi_{k})) + (\lambda_{a} - \lambda_{p} \sigma_{k} )h(\varphi_{k})\mu_{k} \dx \\
& \quad - \int_{\Gamma} K \sigma_{\infty} (\chi_{\sigma} \sigma_{k} + \chi_{\varphi} (1-\varphi_{k})) - K \sigma_{k} \chi_{\varphi} (1-\varphi_{k}) \dHaus = 0.
\end{aligned}
\end{equation}
Thanks to Young's inequality, Poincar\'{e} inequality and the trace theorem, we can deduce that an analogue of \eqref{apriori:totaltimeenergymean:sim} holds for $\varphi_{k}$, $\sigma_{k}$ and $\mu_{k}$ via a similar calculation to that in the proof of Lemma \ref{lem:MainAprioriEst:1}.  Then, following the proof of Lemma \ref{lem:MainAprioriEst:1}, we obtain the following discrete a priori estimate
\begin{equation}\label{discrete:apriori:est1}
\begin{aligned}
\sup_{s \in [0,T]} & \left ( \norm{\Psi(\varphi_{k}(s))}_{L^{1}} + \norm{\varphi_{k}(s)}_{H^{1}}^{2} + \norm{\sigma_{k}(s)}_{L^{2}}^{2} \right ) \\
& + \norm{\nabla \mu_{k}}_{L^{2}(0,T;L^{2})}^{2} +  \norm{\nabla \sigma_{k}}_{L^{2}(0,T;L^{2})}^{2} + \norm{\sigma_{k}}_{L^{2}(0,T;L^{2}(\Gamma))}^{2} \leq \overline{C},
\end{aligned}
\end{equation}
where $\overline{C}$ is the constant in Lemma \ref{lem:MainAprioriEst:1}.  Setting $j = 1$ in \eqref{Galerkin:mu} leads to
\begin{align*}
\int_{\Omega} \mu_{k} \dx = \int_{\Omega} A \Psi'(\varphi_{k}) - \chi_{\varphi} \sigma_{k} \dx,
\end{align*}
and applying the same calculation to that in \eqref{L2norm:mu:Poincare} we obtain analogously
\begin{equation}\label{muL2estimate}
\begin{aligned}
 \norm{\mu_{k}}_{L^{2}}^{2} & \leq 2 C_{\mathrm{P}}^{2} \norm{\nabla \mu_{k}}_{L^{2}}^{2} + 2 \abs{\int_{\Omega} \mu_{k} \dx}^{2} \abs{\Omega}^{-1} \\
& \leq 2 C_{\mathrm{P}}^{2} \norm{\nabla \mu_{k} }_{L^{2}}^{2} + 6 A^{2} R_{3}^{2} \norm{\varphi_{k}}_{L^{2}}^{2} + 6 \chi_{\varphi}^{2}  \norm{\sigma_{k}}_{L^{2}}^{2} + C(A, R_{3}, \abs{\Omega}).
\end{aligned}
\end{equation}
Integrating with respect to time from $0$ to $T$, and using \eqref{discrete:apriori:est1}, we obtain
\begin{equation}\label{apriori:mu}
\begin{aligned}
\norm{\mu_{k}}_{L^{2}(0,T;L^{2})}^{2} & \leq C\left ( \norm{\nabla \mu_{k}}_{L^{2}(0,T;L^{2})}^{2} + \norm{\varphi_{k}}_{L^{2}(0,T;L^{2})}^{2} + \norm{\sigma_{k}}_{L^{2}(0,T;L^{2})}^{2} + 1 \right ) \\
& \leq C(1 + \overline{C}).
\end{aligned}
\end{equation}
Thus, with \eqref{discrete:apriori:est1} and \eqref{apriori:mu}, we see that there exists a positive constant $C$ depending on $\overline{C}$ and $T$ such that
\begin{align*}
\sup_{s \in (0,T]} \norm{\varphi_{k}(s)}_{H^{1}} + \norm{\mu_{k}}_{L^{2}(0,T;H^{1})} + \norm{\sigma_{k}}_{L^{2}(0,T;H^{1})} \leq C
\end{align*}
for all $k$.  This a priori estimate in turn guarantees that the solution $\{\varphi_{k}, \sigma_{k}, \mu_{k} \}$ to \eqref{discrete:system:combined} can be extended to the interval $[0,T]$, and thus $t_{k} = T$ for each $k \in \N$.

\subsection{Passing to the limit}
Let $\Pi_{k}$ denote the orthogonal projection onto $W_{k} = \mathrm{span}\{w_{1}, \dots, w_{k}\}$.  Then,  for any $\zeta \in L^{2}(0,T;H^{1})$, we see that
\begin{align*}
\int_{\Omega} \pd_{t}\varphi_{k} \zeta \dx = \int_{\Omega} \pd_{t}\varphi_{k} \Pi_{k} \zeta \dx = \sum_{j=1}^{k} \int_{\Omega} \pd_{t}\varphi_{k} \zeta_{kj} w_{j} \dx,
\end{align*}
where $\{\zeta_{kj}\}_{1 \leq j \leq k} \subset \R^{k}$ are the coefficients such that $\Pi_{k} \zeta = \sum_{j=1}^{k} \zeta_{kj} w_{j}$.  Thus, from \eqref{Galerkin:varphi}, and the boundedness of $m(\cdot)$ and $h(\cdot)$, we find that
\begin{equation}\label{pdtvarphik:apriori}
\begin{aligned}
\abs{\int_{0}^{T} \int_{\Omega} \pd_{t}\varphi_{k} \zeta \dx} & \leq m_{1} \norm{\nabla \mu_{k}}_{L^{2}(\Omega \times (0,T))} \norm{\nabla \Pi_{k} \zeta}_{L^{2}(\Omega \times (0,T))} \\
& + h_{\infty} \left ( \lambda_{p} \norm{\sigma_{k}}_{L^{2}(\Omega \times (0,T))} + \lambda_{a} \abs{\Omega}^{\frac{1}{2}} T^{\frac{1}{2}} \right )\norm{\Pi_{k} \zeta}_{L^{2}(\Omega \times (0,T))} \\
& \leq C \norm{\zeta}_{L^{2}(0,T;H^{1})},
\end{aligned}
\end{equation}
for some constant $C > 0$ independent of $k$.  Similarly, we obtain from \eqref{Galerkin:sigma} that
\begin{align*}
\abs{ \int_{0}^{T} \int_{\Omega} \pd_{t}\sigma_{k} \zeta \dx} & \leq n_{1} \left ( \chi_{\sigma} \norm{\nabla \sigma_{k}}_{L^{2}(\Omega \times (0,T))} + \chi_{\varphi} \norm{\nabla \varphi_{k}}_{L^{2}(\Omega \times (0,T))} \right ) \norm{\nabla \Pi_{k} \zeta}_{L^{2}(\Omega \times (0,T))} \\
& + \lambda_{c} h_{\infty} \norm{\sigma_{k}}_{L^{2}(\Omega \times (0,T))} \norm{\Pi_{k} \zeta}_{L^{2}(\Omega \times (0,T))} \\
& + K C_{\mathrm{tr}} \left ( \norm{\sigma_{\infty}}_{L^{2}(\Gamma \times (0,T))} + \norm{\sigma_{k}}_{L^{2}(\Gamma \times (0,T))} \right ) \norm{\Pi_{k} \zeta}_{L^{2}(0,T;H^{1})} \\
& \leq C \norm{\zeta}_{L^{2}(0,T;H^{1})},
\end{align*}
for some constant $C > 0$ independent of $k$.  Hence, together with \eqref{discrete:apriori:est1} and \eqref{apriori:mu}, we find that
\begin{align*}
\{\varphi_{k}\}_{k \in \N} & \text{ bounded in } L^{\infty}(0,T;H^{1}) \cap H^{1}(0,T;(H^{1})^{*}), \\
\{\mu_{k}\}_{k \in \N } & \text{ bounded in } L^{2}(0,T;H^{1}), \\
\{\sigma_{k}\}_{k \in \N} &\text{ bounded in } L^{\infty}(0,T;L^{2}) \cap L^{2}(0,T;H^{1}) \cap L^{2}(0,T;L^{2}(\Gamma)) \cap H^{1}(0,T;(H^{1})^{*}).
\end{align*}
By standard compactness results (Banach--Alaoglu theorem and reflexive weak compactness theorem) and \cite[\S 8, Corollary 4]{article:Simon86}, we obtain, for a relabelled subsequence,
\begin{alignat*}{3}
\varphi_{k} & \rightarrow \varphi && \quad \text{ weakly-}* && \quad \text{ in } L^{\infty}(0,T;H^{1}), \\
\varphi_{k} & \rightarrow \varphi && \quad \text{ strongly } && \quad \text{ in } C([0,T];L^{p}) \cap L^{2}(0,T;L^{p}) \text{ and a.e. in } \Omega \times (0,T), \\
\pd_{t}\varphi_{k} & \rightarrow \pd_{t}\varphi && \quad \text{ weakly } && \quad \text{ in } L^{2}(0,T;(H^{1})^{*}), \\
\sigma_{k} & \rightarrow \sigma && \quad \text{ weakly-}* && \quad \text{ in } L^{2}(0,T;H^{1}) \cap L^{\infty}(0,T;L^{2}) \cap L^{2}(0,T;L^{2}(\Gamma)), \\
\sigma_{k} & \rightarrow \sigma && \quad \text{ strongly } && \quad \text{ in } L^{2}(0,T;L^{p}) \text{ and a.e. in } \Omega \times (0,T), \\
\pd_{t}\sigma_{k} & \rightarrow \pd_{t}\sigma && \quad \text{ weakly } && \quad \text{ in } L^{2}(0,T;(H^{1})^{*}), \\
\mu_{k} & \rightarrow \mu && \quad \text{ weakly } && \quad \text{ in } L^{2}(0,T;H^{1}), 
\end{alignat*}
where $p \in [1,\infty)$ for dimensions $d = 1,2$ and $p \in \left [1,\frac{2d}{d-2} \right )$ for dimensions $d \geq 3$.  In particular, the above compactness holds for $p \in [1,2]$ in any dimension $d$, i.e., $\varphi_{k} \to \varphi$ strongly in $L^{2}(0,T;L^{2}) \cong L^{2}(\Omega \times (0,T))$.

For a fixed $j$ and $\delta \in C^{\infty}_{c}(0,T)$, we have $\delta(t) w_{j} \in L^{2}(0,T;H^{1})$, and so, by the triangle inequality and H\"{o}lder's inequality, we obtain
\begin{align*}
\int_{0}^{T} \int_{\Omega} \abs{( \abs{\varphi_{k}} - \abs{\varphi}) (\delta w_{j})} \dx \dt \leq \norm{\varphi_{k} - \varphi}_{L^{2}(0,T;L^{2})} \norm{\delta w_{j}}_{L^{2}(0,T;L^{2})} \to 0 \text{ as } k \to \infty.
\end{align*}
In particular, we have
\begin{align*}
(1 + \abs{\varphi_{k}}) \abs{\delta w_{j}} \to (1+\abs{\varphi}) \abs{\delta w_{j}} \text{ strongly in } L^{1}(\Omega \times (0,T)) \text{ as } k \to \infty.
\end{align*}
By continuity and the growth assumptions on $\Psi'(\cdot)$, we have 
\begin{align*}
\Psi'(\varphi_{k}) \to \Psi'(\varphi) \text{ a.e. as } k \to \infty, \quad \abs{\Psi'(\varphi_{k}) \delta w_{j}} \leq R_{3} (1+\abs{\varphi_{k}}) \abs{\delta w_{j}}.
\end{align*} 
Then, the generalised Lebesgue dominated convergence theorem (see \cite[Theorem 1.9, p. 89]{book:Royden}, or \cite[Theorem 1.23, p. 59]{book:Alt}) yields that
\begin{align*}
\Psi'(\varphi_{k}) \delta w_{j} \to \Psi'(\varphi) \delta w_{j} \text{ strongly in } L^{1}(\Omega \times (0,T)) \text{ as } k \to \infty,
\end{align*}
which leads to
\begin{align*}
\int_{0}^{T} \int_{\Omega} \Psi'(\varphi_{k}) \delta w_{j} \dx \dt \to \int_{0}^{T} \int_{\Omega} \Psi'(\varphi) \delta w_{j} \dx \dt \text{ as } k \to \infty.
\end{align*}
Next, by continuity and boundedness of $m(\cdot)$, we see that $m(\varphi_{k}) \to m(\varphi)$ a.e. in $\Omega \times (0,T)$, and applying Lebesgue dominated convergence theorem to $(m(\varphi_{k}) - m(\varphi)) \abs{\delta \nabla w_{j}}$ yields
\begin{align*}
\norm{(m(\varphi_{k}) - m(\varphi)) \delta \nabla w_{j} }_{L^{2}(\Omega \times (0,T))} \to 0 \text{ as } k \to \infty.
\end{align*}
Together with the weak convergence $\nabla \mu_{k} \rightharpoonup \nabla \mu$ in $L^{2}(0,T;L^{2})$, we obtain, by the product of weak-strong convergence,
\begin{align*}
\int_{0}^{T} \int_{\Omega} m(\varphi_{k}) \delta \nabla w_{j} \cdot \nabla \mu_{k} \dx \dt \to \int_{0}^{T} \int_{\Omega} m(\varphi) \delta \nabla w_{j} \cdot \nabla \mu \dx \dt \text{ as } k \to \infty.
\end{align*}
Terms involving $n(\cdot)$ and $h(\cdot)$ can be dealt with in a similar fashion. 

Multiplying \eqref{Galerkin:system} with $\delta \in C^{\infty}_{c}(0,T)$, integrating in time from $0$ to $T$, and passing to the limit $k \to \infty$, we obtain
\begin{equation*}
\begin{aligned}
\int_{0}^{T} \delta(t) \inner{\pd_{t}\varphi}{w_{j}} \dt & = \int_{0}^{T} \int_{\Omega} \delta(t) \left ( -m(\varphi) \nabla \mu \cdot \nabla w_{j} + (\lambda_{p} \sigma - \lambda_{a}) h(\varphi) w_{j} \right ) \dx \dt, \\
\int_{0}^{T} \int_{\Omega} \delta(t) \mu w_{j} \dx \dt & = \int_{0}^{T} \int_{\Omega} \delta(t) \left ( A \Psi'(\varphi) w_{j} + B \nabla \varphi \cdot \nabla w_{j} - \chi_{\varphi} \sigma w_{j} \right ) \dx  \dt,  \\
\int_{0}^{T} \delta(t) \inner{\pd_{t}\sigma}{w_{j}} \dt & = \int_{0}^{T} \int_{\Omega} \delta(t) \left ( - n(\varphi) (\chi_{\sigma} \nabla \sigma - \chi_{\varphi} \nabla \varphi) \cdot \nabla w_{j} - \lambda_{c} \sigma h(\varphi) w_{j} \right ) \dx \dt \\
\notag & + \int_{0}^{T} \int_{\Gamma} \delta(t) K (\sigma_{\infty} - \sigma) w_{j} \dHaus \dt. 
\end{aligned}
\end{equation*}
Since this holds for all $\delta \in C^{\infty}_{c}(0,T)$, we infer that $(\varphi, \mu, \sigma)$ satisfies
\begin{subequations}\label{equ:limit}
\begin{align}
\inner{\pd_{t}\varphi}{w_{j}} & = \int_{\Omega} -m(\varphi) \nabla \mu \cdot \nabla w_{j} + (\lambda_{p} \sigma - \lambda_{a}) h(\varphi) w_{j}  \dx, \\
\int_{\Omega} \mu w_{j} \dx & = \int_{\Omega} A \Psi'(\varphi) w_{j} + B \nabla \varphi \cdot \nabla w_{j} - \chi_{\varphi} \sigma w_{j} \dx, \label{limit:mu} \\
\inner{\pd_{t}\sigma}{w_{j}} & = \int_{\Omega} -n(\varphi) (\chi_{\sigma} \nabla \sigma - \chi_{\varphi} \nabla \varphi) \cdot \nabla w_{j} - \lambda_{c} \sigma h(\varphi) w_{j} \dx \label{limit:sigma} \\
\notag & + \int_{\Gamma} K  (\sigma_{\infty} - \sigma) w_{j} \dHaus,
\end{align}
\end{subequations}
for a.e. $t \in (0,T)$ and for all $j \geq 1$.  As $\{ w_{j}\}_{j \in \N}$ is a basis for $H^{1}$, we see that the triplet $( \varphi, \mu, \sigma )$ satisfies \eqref{CHNutrient:truncated:weakform} for all $\zeta, \lambda, \xi \in H^{1}$.  Moreover, the strong convergence of $\varphi_{k}$ to $\varphi$ in $C([0,T];L^{2})$ and the fact that $\varphi_{k}(0) \to \varphi_{0}$ in $L^{2}$ imply that $\varphi(0) = \varphi_{0}$.  Similarly, by the continuous embedding
\begin{align*}
L^{2}(0,T;H^{1}) \cap H^{1}(0,T;(H^{1})^{*}) \subset C([0,T];L^{2}),
\end{align*}
and that $\sigma_{k}(0) \to \sigma_{0}$ in $L^{2}$, we have $\sigma(0) = \sigma_{0}$.  This shows that $(\varphi, \mu, \sigma)$ is a weak solution of \eqref{CHNutrient:truncated:weakform}.

\section{Continuous dependence}\label{sec:Uniqueness}
Suppose we have two weak solution triplets $\{\varphi_{i}, \mu_{i}, \sigma_{i}\}_{i = 1,2}$ to \eqref{Intro:CHNutrient} satisfying the assumptions of Theorem \ref{thm:ctsdep}.  Let us denote the differences by
\begin{align}
\varphi := \varphi_{1} - \varphi_{2}, \quad \sigma := \sigma_{1} - \sigma_{2}, \quad \mu := \mu_{1} - \mu_{2}, \quad \Sigma_{\infty} := \sigma_{\infty,1} - \sigma_{\infty,2}.
\end{align}
Then, we see that
\begin{align*}
\varphi & \in L^{\infty}(0,T;H^{1}) \cap H^{1}(0,T;(H^{1})^{*}), \quad \mu \in L^{2}(0,T;H^{1}), \\
\sigma & \in L^{2}(0,T;H^{1}) \cap L^{\infty}(0,T;L^{2}) \cap H^{1}(0,T;(H^{1})^{*}) \cap L^{2}(0,T;L^{2}(\Gamma))
\end{align*}
satisfy
\begin{subequations}
\begin{align}
\inner{\pd_{t} \varphi}{\zeta} & =  \int_{\Omega} - \nabla \mu \cdot \nabla \zeta + \lambda_{p} (\sigma_{1} h(\varphi_{1}) - \sigma_{2} h(\varphi_{2})) \zeta - \lambda_{a} (h(\varphi_{1}) - h(\varphi_{2})) \zeta \dx, \label{difference:varphi} \\
\int_{\Omega} \mu \lambda \dx & =  \int_{\Omega} A (\Psi'(\varphi_{1}) - \Psi'(\varphi_{2})) \lambda + B \nabla \varphi \cdot \nabla \lambda - \chi_{\varphi} \sigma \lambda \dx, \label{difference:mu} \\
\inner{\pd_{t} \sigma}{\xi} & = \int_{\Omega} -(\chi_{\sigma} \nabla \sigma - \chi_{\varphi} \nabla \varphi) \cdot \nabla \xi \dx \label{difference:sigma} \\
\notag & - \int_{\Omega} \lambda_{c} (\sigma_{1} h(\varphi_{1}) - \sigma_{2} h(\varphi_{2})) \xi \dx + \int_{\Gamma} K ( \Sigma_{\infty} - \sigma) \xi \dHaus, 
\end{align}
\end{subequations}
for all $\zeta, \lambda, \xi \in H^{1}$ and for a.e. $t \in (0,T)$.  Testing with $\zeta = \varphi$, $\xi = \sigma$, $\lambda = \mu - \chi_{\varphi} \sigma$ leads to
\begin{subequations}
\begin{align}
\frac{1}{2} \frac{\dd}{\dt} \norm{\varphi}_{L^{2}}^{2} & = \int_{\Omega} - \nabla \mu \cdot \nabla \varphi + \lambda_{p}(\sigma_{1} h(\varphi_{1}) - \sigma_{2} h(\varphi_{2})) \varphi - \lambda_{a} (h(\varphi_{1}) - h(\varphi_{2})) \varphi \dx, \label{uniq:apr1} \\
\frac{1}{2} \frac{\dd}{\dt} \norm{\sigma}_{L^{2}}^{2} & =  - \chi_{\sigma} \norm{\nabla \sigma}_{L^{2}}^{2} - K \norm{\sigma}_{L^{2}(\Gamma)}^{2} \label{uniq:apr2} \\
\notag & + \int_{\Omega} \chi_{\varphi} \nabla \varphi \cdot \nabla \sigma - \lambda_{c} (\sigma_{1} h(\varphi_{1}) - \sigma_{2} h(\varphi_{2})) \sigma \dx + K \int_{\Gamma} \Sigma_{\infty} \sigma \dHaus, \\
\norm{\mu}_{L^{2}}^{2}  & = \int_{\Omega} A(\Psi'(\varphi_{1}) - \Psi'(\varphi_{2})) (\mu - \chi_{\varphi} \sigma) + B \nabla \varphi \cdot \nabla (\mu - \chi_{\varphi} \sigma) \dx  \label{uniq:apr3} \\
\notag & + \chi_{\varphi}^{2} \norm{\sigma}_{L^{2}}^{2}.
\end{align}
\end{subequations}
Upon adding the products of $B$ with \eqref{uniq:apr1} and \eqref{uniq:apr2} with \eqref{uniq:apr3}, we obtain
\begin{equation}\label{uniq:apriori}
\begin{aligned}
&  \frac{B}{2} \frac{\dd}{\dt} \left ( \norm{\sigma}_{L^{2}}^{2} + \norm{\varphi}_{L^{2}}^{2} \right ) + \norm{\mu}_{L^{2}}^{2} - \chi_{\varphi}^{2} \norm{\sigma}_{L^{2}}^{2} + B\chi_{\sigma} \norm{\nabla \sigma}_{L^{2}}^{2} + BK \norm{\sigma}_{L^{2}(\Gamma)}^{2}\\
& =  \int_{\Omega} (\sigma_{1} h(\varphi_{1}) - \sigma_{2} h(\varphi_{2}))(\lambda_{p} B \varphi - \lambda_{c} B \sigma) + A(\Psi'(\varphi_{1}) - \Psi'(\varphi_{2}))(\mu - \chi_{\varphi} \sigma) \dx  \\
& -  B\lambda_{a} \int_{\Omega} (h(\varphi_{1}) - h(\varphi_{2})) \varphi \dx + BK \int_{\Gamma} \Sigma_{\infty} \sigma \dHaus \\
& \leq \int_{\Omega} (\abs{\sigma_{1}} \mathrm{L}_{h} \abs{\varphi} + h_{\infty} \abs{\sigma}) (\lambda_{p} B \abs{\varphi} + \lambda_{c} B \abs{\sigma}) + A \mathrm{L}_{\Psi'} \abs{\varphi}(\abs{\mu} + \chi_{\varphi} \abs{\sigma}) \dx \\
& + \int_{\Omega}  B \lambda_{a} \mathrm{L}_{h} \abs{\varphi}^{2} \dx + \frac{BK}{2} \norm{\Sigma_{\infty}}_{L^{2}(\Gamma)}^{2} + \frac{BK}{2} \norm{\sigma}_{L^{2}(\Gamma)}^{2},
\end{aligned}
\end{equation}
where we have used H\"{o}lder's inequality and Young's inequality on the boundary term involving $\Sigma_{\infty}$ and the Lipschitz assumptions on $h(\cdot)$ and $\Psi'(\cdot)$ to deduce that
\begin{align*}
\abs{\sigma_{1} h(\varphi_{1}) - \sigma_{2} h(\varphi_{2}))} & \leq \abs{\sigma_{1}} \abs{h(\varphi_{1}) - h(\varphi_{2})} + \abs{\sigma} \abs{h(\varphi_{2})} \leq \abs{\sigma_{1}} \mathrm{L}_{h} \abs{\varphi} + h_{\infty} \abs{\sigma}, \\
\abs{\Psi'(\varphi_{1}) - \Psi'(\varphi_{2})} & \leq \mathrm{L}_{\Psi'} \abs{\varphi}.
\end{align*}
Next, let us consider a constant $\mathcal{X} > 0$, yet to be determined, and consider testing with $\lambda = \mathcal{X} \varphi$ in \eqref{difference:mu}.  Then H\"{o}lder's inequality and Young's inequality lead to
\begin{equation}\label{nablavarphiL2:part}
\begin{aligned}
&  B \mathcal{X} \norm{\nabla \varphi}_{L^{2}}^{2}  = \mathcal{X} \int_{\Omega} (\mu + A (\Psi'(\varphi_{2}) - \Psi'(\varphi_{1})) + \chi_{\varphi} \sigma) \varphi \dx \\
& \quad \leq C(\mathcal{X}, A, \chi_{\varphi}, \mathrm{L}_{\Psi'}) \left ( \norm{\mu}_{L^{2}} \norm{\varphi}_{L^{2}} + \norm{\varphi}_{L^{2}}^{2} + \norm{\sigma}_{L^{2}} \norm{\varphi}_{L^{2}} \right ) \\
& \quad \leq \frac{1}{4} \norm{\mu}_{L^{2}}^{2} + C(\mathcal{X}, A, \chi_{\varphi}, \mathrm{L}_{\Psi'}) \left ( \norm{\varphi}_{L^{2}}^{2} + \norm{\sigma}_{L^{2}}^{2} \right ).
\end{aligned}
\end{equation}
Adding \eqref{nablavarphiL2:part} to \eqref{uniq:apriori} yields that
\begin{equation}\label{uniq:apriori:version2}
\begin{aligned}
& \frac{B}{2} \frac{\dd}{\dt} \left ( \norm{\sigma}_{L^{2}}^{2} + \norm{\varphi}_{L^{2}}^{2} \right ) - \chi_{\varphi}^{2} \norm{\sigma}_{L^{2}}^{2} \\
& \quad + B\chi_{\sigma} \norm{\nabla \sigma}_{L^{2}}^{2} + \frac{BK}{2} \norm{\sigma}_{L^{2}(\Gamma)}^{2} + B \mathcal{X} \norm{\nabla \varphi}_{L^{2}}^{2} + \norm{\mu}_{L^{2}}^{2}  \\
& \quad \leq \frac{1}{4} \norm{\mu}_{L^{2}}^{2} + C(\mathcal{X}, A, B, \mathrm{L}_{h}, \lambda_{a},  \chi_{\varphi}, \mathrm{L}_{\Psi'}) \left ( \norm{\varphi}_{L^{2}}^{2} + \norm{\sigma}_{L^{2}}^{2} \right ) + \frac{BK}{2} \norm{\Sigma_{\infty}}_{L^{2}(\Gamma)}^{2} \\
& \quad +  \int_{\Omega} (\abs{\sigma_{1}} \mathrm{L}_{h} \abs{\varphi} + h_{\infty} \abs{\sigma}) (\lambda_{p} B \abs{\varphi} + \lambda_{c} B \abs{\sigma}) + A \mathrm{L}_{\Psi'} \abs{\varphi}(\abs{\mu} + \chi_{\varphi} \abs{\sigma}) \dx .
\end{aligned}
\end{equation}
By H\"{o}lder's inequality, Young's inequality and the following Sobolev embedding for dimensions $d \leq 4$,
\begin{align}\label{Sobo:H1L4}
\norm{f}_{L^{4}} \leq C_{\mathrm{S}} \norm{f}_{H^{1}} \quad \forall f \in H^{1},
\end{align} 
where $C_{\mathrm{S}}$ is a positive constant depending only on $\Omega$ and $d$, we have
\begin{align*}
& \int_{\Omega} (\abs{\sigma_{1}} \mathrm{L}_{h} \abs{\varphi} + h_{\infty} \abs{\sigma}) (\lambda_{p} B \abs{\varphi} + \lambda_{c} B \abs{\sigma}) + A \mathrm{L}_{\Psi'} \abs{\varphi}(\abs{\mu} + \chi_{\varphi} \abs{\sigma}) \dx \\
& \quad \leq \mathrm{L}_{h} \lambda_{p} B \norm{\sigma_{1}}_{L^{2}} \norm{\varphi}_{L^{4}}^{2} + \mathrm{L}_{h} \lambda_{c} B \norm{\sigma_{1}}_{L^{2}} \norm{\varphi}_{L^{4}} \norm{\sigma}_{L^{4}} \\
& \quad + C(\lambda_{p}, B, \lambda_{c}, h_{\infty}, A, \mathrm{L}_{\Psi'}, \chi_{\varphi}) \left ( \norm{\varphi}_{L^{2}}^{2} + \norm{\sigma}_{L^{2}}^{2} \right ) + \frac{1}{4} \norm{\mu}_{L^{2}}^{2} \\
& \quad \leq  \left ( C_{\mathrm{S}}^{2} \mathrm{L}_{h} B\lambda_{p} \norm{\sigma_{1}}_{L^{\infty}(0,T;L^{2})} + \frac{B}{2 \chi_{\sigma}} C_{\mathrm{S}}^{4} \mathrm{L}_{h}^{2} \lambda_{c}^{2} \norm{\sigma_{1}}_{L^{\infty}(0,T;L^{2})}^{2} \right ) \norm{\varphi}_{H^{1}}^{2} \\
& \quad + C\left ( \norm{\varphi}_{L^{2}}^{2} + \norm{\sigma}_{L^{2}}^{2} \right ) + \frac{1}{4} \norm{\mu}_{L^{2}}^{2} + \frac{B \chi_{\sigma}}{2} \norm{\nabla \sigma}_{L^{2}}^{2},
\end{align*}
where the positive constant $C$ depends on $\lambda_{p}$, $B$, $\lambda_{c}$, $h_{\infty}$, $A$, $\mathrm{L}_{\Psi'}$, $\chi_{\varphi}$ and $\chi_{\sigma}$.  In turn, from \eqref{uniq:apriori:version2} we obtain
\begin{equation}
\begin{aligned}
& \frac{B}{2}  \frac{\dd}{\dt} \left ( \norm{\sigma}_{L^{2}}^{2} + \norm{\varphi}_{L^{2}}^{2} \right )\\
& \quad + \frac{1}{2}\norm{\mu}_{L^{2}}^{2} + \frac{B \chi_{\sigma}}{2} \norm{\nabla \sigma}_{L^{2}}^{2} + \frac{BK}{2} \norm{\sigma}_{L^{2}(\Gamma)}^{2} + B \mathcal{X} \norm{\nabla \varphi}_{L^{2}}^{2} \\
& \quad - B C_{\mathrm{S}}^{2} \mathrm{L}_{h} \norm{\sigma_{1}}_{L^{\infty}(0,T;L^{2})} \left ( \lambda_{p} + \frac{1}{2 \chi_{\sigma}} C_{\mathrm{S}}^{2} \mathrm{L}_{h} \lambda_{c}^{2} \norm{\sigma_{1}}_{L^{\infty}(0,T;L^{2})} \right ) \norm{\nabla \varphi}_{L^{2}}^{2}  \\
& \quad \leq C \left ( \norm{\varphi}_{L^{2}}^{2} + \norm{\sigma}_{L^{2}}^{2} \right ) + \frac{BK}{2} \norm{\Sigma_{\infty}}_{L^{2}(\Gamma)}^{2},
\end{aligned}
\end{equation}
where the constant $C$ depends on $\norm{\sigma_{1}}_{L^{\infty}(0,T;L^{2})}$, $C_{\mathrm{S}}$, $A$, $B$, $\mathrm{L}_{h}$, $\lambda_{p}$, $\lambda_{c}$, $h_{\infty}$, $\chi_{\varphi}$, $\chi_{\sigma}$, $\mathcal{X}$, and $\mathrm{L}_{\Psi'}$.  We now choose
\begin{align*}
\mathcal{X} > \left ( C_{\mathrm{S}}^{2} \mathrm{L}_{h} \lambda_{p} \norm{\sigma_{1}}_{L^{\infty}(0,T;L^{2})} + \frac{1}{2 \chi_{\sigma}} C_{\mathrm{S}}^{4} \mathrm{L}_{h}^{2} \lambda_{c}^{2} \norm{\sigma_{1}}_{L^{\infty}(0,T;L^{2})}^{2} \right ),
\end{align*}
and so there exist constants $c, C > 0$ such that
\begin{align*}
&  \frac{\dd}{\dt} \left ( \norm{\sigma}_{L^{2}}^{2} + \norm{\varphi}_{L^{2}}^{2} \right ) - C \left ( \norm{\sigma}_{L^{2}}^{2} + \norm{\varphi}_{L^{2}}^{2} \right ) \\
& \quad +  \norm{\mu}_{L^{2}}^{2} + \norm{\nabla \sigma}_{L^{2}}^{2} + \norm{\sigma}_{L^{2}(\Gamma)}^{2} + \norm{\nabla \varphi}_{L^{2}}^{2} \leq c \norm{\Sigma_{\infty}}_{L^{2}(\Gamma)}^{2},
\end{align*}
and a Gronwall argument yields 
\begin{align*}
& \left ( \norm{\sigma(s)}_{L^{2}}^{2} + \norm{\varphi(s)}_{L^{2}}^{2} \right ) + \int_{0}^{s} \norm{\mu}_{L^{2}}^{2} + \norm{\nabla \sigma}_{L^{2}}^{2} + \norm{\sigma}_{L^{2}(\Gamma)}^{2} + \norm{\nabla \varphi}_{L^{2}}^{2} \dt \\
& \quad \leq c \exp(CT) \norm{\Sigma_{\infty}}_{L^{2}(0,T;L^{2}(\Gamma))}^{2} + \exp(CT) \left ( \norm{\sigma(0)}_{L^{2}}^{2} + \norm{\varphi(0)}_{L^{2}}^{2} \right )
\end{align*}
for any $s \in (0,T]$.  Taking the supremum in $s$ on the left-hand side yields the desired result.

\section{Quasi-static nutrient}\label{sec:quasi}
For the existence of weak solutions to \eqref{Intro:CHN:quasi}, we will only show the existence of solutions at the level of the Galerkin approximation and provide the necessary a priori estimates.  

\subsection{Existence of Galerkin solutions}\label{sec:quasi:Galerkin}
Similar to Section \ref{sec:Galerkinapprox}, we consider the Galerkin ansatz
\begin{subequations}\label{quasi:Galerkin:system}
\begin{align}
\int_{\Omega} \pd_{t} \varphi_{k} w_{j} \dx & = \int_{\Omega} -m(\varphi_{k}) \nabla \mu_{k} \cdot \nabla w_{j} + (\lambda_{p} \sigma_{k} - \lambda_{a}) h(\varphi_{k}) w_{j} \dx, \label{quasi:Galerkin:varphi} \\
\int_{\Omega} \mu_{k} w_{j} \dx & = \int_{\Omega} A \Psi'(\varphi_{k}) w_{j} + B \nabla \varphi_{k} \cdot \nabla w_{j} - \chi_{\varphi} \sigma_{k} w_{j} \dx,  \label{quasi:Galerkin:mu} \\
\int_{\Gamma} K ( \sigma_{\infty} - \sigma_{k}) w_{j} \dHaus & = \int_{\Omega} D(\varphi_{k}) ( \nabla \sigma_{k} - \eta \nabla \varphi_{k}) \cdot \nabla w_{j} + \lambda_{c} \sigma_{k} h(\varphi_{k}) w_{j} \dx, \label{quasi:Galerkin:sigma} 
\end{align}
\end{subequations}
with the finite-dimensional functions $\varphi_{k}$, $\sigma_{k}$ and $\mu_{k}$ as defined in \eqref{Galerkin:ansatz}.  Then, \eqref{quasi:Galerkin:system} can be written in terms of the following initial value problem
\begin{subequations}\label{quasi:discrete:system}
\begin{align}
\frac{\dd}{\dt} \bm{\alpha}_{k} & = - \bm{S}_{m}^{k} \bm{\beta}_{k} + \lambda_{p} \bm{M}_{h}^{k} \bm{\gamma}_{k} - \lambda_{a} \bm{h}^{k}, \label{quasi:discrete:varphi} \\
 \bm{\beta}_{k} & = A \bm{\psi}^{k} + B\bm{S} \bm{\alpha}_{k} - \chi_{\varphi} \bm{\gamma}_{k}, \label{quasi:discrete:mu} \\
\bm{0} & =  \bm{S}_{D}^{k}( \bm{\gamma}_{k} - \eta \bm{\alpha}_{k}) + \lambda_{c} \bm{M}_{h}^{k} \bm{\gamma}_{k} + K\bm{M}_{\Gamma} \bm{\gamma}_{k} - K \bm{\Sigma}^{k}, \label{quasi:discrete:sigma}
\end{align}
\end{subequations}
with initial data $\bm{\alpha}_{k}(0)$ defined in \eqref{discrete:system:combined:initial}.  Here, the matrix $\bm{S}_{D}^{k}$ is defined as
\begin{align*}
(\bm{S}_{D}^{k})_{ji} & := \int_{\Omega} D(\varphi_{k}) \nabla w_{i} \cdot \nabla w_{j} \dx, 
\end{align*}
for $1 \leq i, j \leq k$.  Upon rearranging, we see that \eqref{quasi:discrete:sigma} can be written as
\begin{align*}
( \bm{S}_{D}^{k} + \lambda_{c} \bm{M}_{h}^{k} + K \bm{M}_{\Gamma}) \bm{\gamma}_{k} = \eta \bm{S}_{D}^{k}  \bm{\alpha}_{k} + K \bm{\Sigma}^{k}.
\end{align*} 
Note that for a general coefficient vector $\bm{\xi} = (\xi_{1}, \dots, \xi_{k})^{\top} \in \R^{k}$ corresponding to $v := \sum_{i=1}^{k} \xi_{i} w_{i} \in W_{k}$, we have
\begin{align*}
\bm{\xi}^{\top} (\bm{S}_{D}^{k} + \lambda_{c} \bm{M}_{h}^{k} + K \bm{M}_{\Gamma}) \bm{\xi}  =\int_{\Omega} D(\varphi_{k}) \abs{\nabla v}^{2} + \lambda_{c} h(\varphi_{k}) \abs{v}^{2} \dx + \int_{\Gamma} K \abs{v}^{2} \dx  \geq 0,
\end{align*}
where we used that $\lambda_{c} \geq 0$, $h(\cdot) \geq 0$ and $D(\cdot) > 0$.  This in turn implies that $\bm{S}_{D}^{k} + \lambda_{c} \bm{M}_{h}^{k} + K \bm{M}_{\Gamma}$ is positive semi-definite.  Moreover, by the Poincar\'{e} inequality \eqref{boundary:Poincare} it is clear that
\begin{align*}
0 = \bm{\xi}^{\top} (\bm{S}_{D}^{k} + \lambda_{c} \bm{M}_{h}^{k} + K \bm{M}_{\Gamma}) \bm{\xi} \Longleftrightarrow v = 0 \Longleftrightarrow \bm{\xi} = \bm{0},
\end{align*}
and thus $\bm{S}_{D}^{k} + \lambda_{c} \bm{M}_{h}^{k} + K \bm{M}_{\Gamma}$ is an invertible positive definite matrix.  We can now write \eqref{quasi:discrete:system} in terms of an initial value problem in $\bm{\alpha}_{k}$,
\begin{equation}\label{quasi:ODE}
\begin{aligned}
\frac{\dd}{\dt} \bm{\alpha}_{k} & = - B \bm{S}_{m}^{k}  \bm{S}  \bm{\alpha}_{k} - \lambda_{a}  \bm{h}^{k} -  A \bm{S}_{m}^{k} \bm{\psi}^{k} \\
& + (\chi_{\varphi} \bm{S}_{m}^{k}  + \lambda_{p} \bm{M}_{h}^{k})(\bm{S}_{D}^{k} + \lambda_{c} \bm{M}_{h}^{k} + K \bm{M}_{\Gamma})^{-1} ( \eta \bm{S}_{D} \bm{\alpha}_{k} + K \bm{\Sigma}^{k}),
\end{aligned}
\end{equation}
with $\bm{\alpha}_{k}(0)$ as defined in \eqref{discrete:system:combined:initial}.  We find that the right-hand side of \eqref{quasi:ODE} depends continuously on $\bm{\alpha}_{k}$, and for every $k \in \N$ the existence of a local solution defined on $[0,t_{k}]$ is guaranteed by the Cauchy--Peano theorem.  

\subsection{A priori estimates}
The derivation of a priori estimates for the Galerkin solutions follows in a similar manner to Section \ref{sec:Galerkinapprox}.  Multiplying \eqref{quasi:Galerkin:varphi} with $\beta_{j}^{k} + \chi_{\varphi} \gamma_{j}^{k}$ and \eqref{quasi:Galerkin:mu} with $\frac{\dd}{\dt} \alpha_{j}^{k}$, and summing from $j = 1$ to $k$ gives
\begin{equation}\label{quasi:apriori:firstpart}
\begin{aligned}
& \frac{\dd}{\dt} \int_{\Omega} \left [ A \Psi'(\varphi_{k}) + \frac{B}{2} \abs{\nabla \varphi_{k}}^{2} \right ] \dx + \int_{\Omega} m(\varphi_{k}) \nabla \mu_{k} \cdot \nabla (\mu_{k} + \chi_{\varphi} \sigma_{k}) \dx \\
& \quad = \int_{\Omega} (\lambda_{p} \sigma_{k} - \lambda_{a}) h(\varphi_{k}) (\mu_{k} + \chi_{\varphi} \sigma_{k}) \dx.
\end{aligned}
\end{equation}
Let $\mathcal{W}$ denote a positive constant yet to be determined.  We multiply \eqref{quasi:Galerkin:sigma} with $\mathcal{W} \gamma_{j}^{k}$ and sum from $j = 1$ to $k$, leading to
\begin{equation}
\label{quasi:apriori:secondpart}
\begin{aligned}
& \mathcal{W} \int_{\Omega} D(\varphi_{k}) ( \abs{\nabla \sigma_{k}}^{2} - \eta \nabla \varphi_{k} \cdot \nabla \sigma_{k}) + \lambda_{c} \abs{\sigma_{k}}^{2} h(\varphi_{k}) \dx \\
& \quad = \int_{\Gamma} \mathcal{W} K (\sigma_{\infty} - \sigma_{k}) \sigma_{k} \dHaus.
\end{aligned}
\end{equation}
Summing \eqref{quasi:apriori:firstpart} and \eqref{quasi:apriori:secondpart} leads to
\begin{equation}\label{quasi:apriori:energy}
\begin{aligned}
& \frac{\dd}{\dt} \int_{\Omega} \left [ A \Psi(\varphi_{k}) + \frac{B}{2} \abs{\nabla \varphi_{k}}^{2} \right ] \dx + \int_{\Gamma} \mathcal{W}K  \abs{\sigma_{k}}^{2} \dHaus  \\
& \quad + \int_{\Omega} m(\varphi_{k}) \abs{\nabla \mu_{k}}^{2} + \mathcal{W} D(\varphi_{k}) \abs{\nabla \sigma_{k}}^{2} + \mathcal{W} \lambda_{c} h(\varphi_{k}) \abs{\sigma_{k}}^{2} \dx \\
& \quad = \int_{\Omega} (\lambda_{p} \sigma_{k} - \lambda_{a}) h(\varphi_{k}) (\mu_{k} + \chi_{\varphi} \sigma_{k}) - \chi_{\varphi} m(\varphi_{k}) \nabla \mu_{k} \cdot \nabla \sigma_{k} \dx \\
& \quad + \int_{\Omega} \mathcal{W} D(\varphi_{k}) \eta \nabla \varphi_{k} \cdot \nabla \sigma_{k} \dx +\int_{\Gamma} \mathcal{W} K \sigma_{\infty} \sigma_{k} \dHaus.
\end{aligned}
\end{equation}
Neglecting the non-negative term $\int_{\Omega} \lambda_{c}(\varphi_{k}) \abs{\sigma_{k}}^{2} \dx$, and using the boundedness of $m(\cdot)$, $D(\cdot)$, and $h(\cdot)$, and applying H\"{o}lder's inequality and Young's inequality we have
\begin{equation}
\begin{aligned}
& \frac{\dd}{\dt} \int_{\Omega} \left [ A \Psi(\varphi_{k}) + \frac{B}{2} \abs{\nabla \varphi_{k}}^{2} \right ] \dx + \frac{m_{0}}{2} \norm{\nabla \mu_{k}}_{L^{2}}^{2} + \mathcal{W} \frac{D_{0}}{2} \norm{\nabla \sigma_{k}}_{L^{2}}^{2}  +  \mathcal{W} \frac{K}{2} \norm{\sigma_{k}}_{L^{2}(\Gamma)}^{2} \\
& \quad \leq \mathcal{W} \frac{K}{2} \norm{\sigma_{\infty}}_{L^{2}(\Gamma)}^{2} + \frac{\chi_{\varphi}^{2} m_{1}}{2} \norm{\nabla \sigma_{k}}_{L^{2}}^{2} + \mathcal{W} \frac{D_{1} \eta^{2}}{2} \norm{\nabla \varphi_{k}}_{L^{2}}^{2} \\
& \quad + \left ( h_{\infty} \lambda_{p} \frac{d_{1}}{2} + d_{2} \right ) \norm{\mu_{k}}_{L^{2}}^{2}  + h_{\infty} \left (  \lambda_{p} \frac{1}{2 d_{1}} + \lambda_{p} \chi_{\varphi} + d_{3} \right ) \norm{\sigma_{k}}_{L^{2}}^{2} \\
& \quad + C(d_{2},  d_{3}, \chi_{\varphi}, \lambda_{a}, h_{\infty}, \abs{\Omega}),
\end{aligned}
\end{equation}
for some positive constants $d_{1}, d_{2}, d_{3}$ yet to be determined.  Employing \eqref{boundary:Poincare}, we see that
\begin{align}\label{boundary:Poincare:sigma}
\norm{\sigma_{k}}_{L^{2}}^{2} \leq 2 C_{\mathrm{P}}^{2} \left ( \norm{\nabla \sigma_{k}}_{L^{2}}^{2} + \norm{\sigma_{k}}_{L^{2}(\Gamma)}^{2} \right ),
\end{align}
and from \eqref{muL2estimate} and \eqref{varphiL2PsiL1} we have
\begin{equation}\label{muL2estimate:Psi}
\begin{aligned}
\norm{\mu_{k}}_{L^{2}}^{2} &  \leq 2 C_{\mathrm{P}}^{2} \norm{\nabla \mu_{k} }_{L^{2}}^{2} + 6 A^{2} R_{3}^{2} \norm{\varphi_{k}}_{L^{2}}^{2} + 6 \chi_{\varphi}^{2} \norm{\sigma_{k}}_{L^{2}}^{2} + C(A, R_{3}, \abs{\Omega}) \\
& \leq 2C_{\mathrm{P}}^{2} \norm{\nabla \mu_{k}}_{L^{2}}^{2} + \frac{6 A^{2} R_{3}^{2}}{R_{1}} \norm{\Psi(\varphi_{k})}_{L^{1}} + 6 \chi_{\varphi}^{2} \norm{\sigma_{k}}_{L^{2}}^{2} \\
& + C(A, R_{1}, R_{2},  R_{3}, \abs{\Omega}).
\end{aligned}
\end{equation}
Substituting \eqref{boundary:Poincare:sigma} and \eqref{muL2estimate:Psi} into \eqref{quasi:apriori:energy} leads to
\begin{equation}
\begin{aligned}
&\frac{\dd}{\dt} \left [ A \norm{\Psi(\varphi_{k})}_{L^{1}} + \frac{B}{2} \norm{\nabla \varphi_{k}}_{L^{2}}^{2} \right ] - \mathcal{W} \frac{D_{1} \eta^{2}}{2} \norm{\nabla \varphi_{k}}_{L^{2}}^{2} \\
& + \norm{\nabla \mu_{k}}_{L^{2}}^{2} \left ( \frac{m_{0}}{2} - 2C_{\mathrm{P}}^{2} \left (h_{\infty} \lambda_{p} \frac{d_{1}}{2} + d_{2} \right ) \right ) \\
& + \norm{\sigma_{k}}_{L^{2}(\Gamma)}^{2} \left ( \mathcal{W} \frac{K}{2} - 2 C_{\mathrm{P}}^{2} \left ( h_{\infty} \left (  \lambda_{p} \frac{1}{2 d_{1}} + \lambda_{p} \chi_{\varphi} + d_{3} \right ) + 6 \chi_{\varphi}^{2} \left (h_{\infty} \lambda_{p} \frac{d_{1}}{2} + d_{2} \right ) \right ) \right ) \\
& + \norm{\nabla \sigma_{k}}_{L^{2}}^{2} \left ( \mathcal{W} \frac{D_{0}}{2} - \frac{\chi_{\varphi}^{2} m_{1}}{2} - 2C_{\mathrm{P}}^{2} \left ( h_{\infty} \left (  \lambda_{p} \frac{1}{2 d_{1}} + \lambda_{p} \chi_{\varphi} + d_{3} \right ) + 6 \chi_{\varphi}^{2} \left (h_{\infty} \lambda_{p} \frac{d_{1}}{2} + d_{2} \right ) \right )\right ) \\
& -\norm{\Psi(\varphi_{k})}_{L^{1}} \frac{6 A^{2} R_{3}^{2}}{R_{1}}\left (h_{\infty} \lambda_{p} \frac{d_{1}}{2} + d_{2} \right ) \\
& \leq C(R_{1}, R_{2}, R_{3}, A, \mathcal{W}, K, d_{2}, d_{3}, \chi_{\varphi}, \lambda_{a}, h_{\infty}, \abs{\Omega}) \left (1 + \norm{\sigma_{\infty}}_{L^{2}(\Gamma)}^{2} \right ).
\end{aligned}
\end{equation}
We choose
\begin{align*}
d_{1} = \frac{m_{0}}{8 h_{\infty} \lambda_{p} C_{\mathrm{P}}^{2}} , \quad d_{2} = \frac{m_{0}}{16 C_{\mathrm{P}}^{2}}, \quad d_{3} = 1,
\end{align*}
and
\begin{align*}
\mathcal{W} > \min \left ( \frac{2}{K}, \frac{2}{D_{0}} \right ) \left ( \frac{\chi_{\varphi}^{2} m_{1}}{2} +\frac{3}{4} m_{0} \chi_{\varphi}^{2} + 2C_{\mathrm{P}}^{2} h_{\infty} \left ( \frac{4\lambda_{p}^{2} h_{\infty} C_{\mathrm{P}}^{2}}{m_{0}} + \lambda_{p} \chi_{\varphi} + 1 \right )  \right )
\end{align*}
so that there exists a positive constant $\overline{c}$ such that
\begin{align*}
& \frac{\dd}{\dt} \left [ A \norm{\Psi(\varphi_{k})}_{L^{1}} + \frac{B}{2} \norm{\nabla \varphi_{k}}_{L^{2}}^{2} \right ] - \frac{3 A^{2} R_{3}^{2} m_{0}}{4 C_{\mathrm{P}}^{2} R_{1}} \norm{\Psi(\varphi_{k})}_{L^{1}} - \frac{\mathcal{W} D_{1} \eta^{2}}{2} \norm{\nabla \varphi_{k}}_{L^{2}}^{2} \\
& \quad + \overline{c} \left ( \norm{\nabla \mu_{k}}_{L^{2}}^{2} + \norm{\nabla \sigma_{k}}_{L^{2}}^{2} + \norm{\sigma_{k}}_{L^{2}(\Gamma)}^{2} \right  ) \leq C \left ( 1 + \norm{\sigma_{\infty}}_{L^{2}(\Gamma)}^{2} \right ).
\end{align*}
A Gronwall argument gives
\begin{equation}\label{quasi:apriori:first}
\begin{aligned}
\sup_{s \in (0,T]} & \left ( \norm{\Psi(\varphi_{k}(s))}_{L^{1}} + \norm{\nabla \varphi_{k}(s)}_{L^{2}}^{2} \right ) \\
& +  \norm{\nabla \mu_{k}}_{L^{2}(0,T;L^{2})}^{2} + \norm{\nabla \sigma_{k}}_{L^{2}(0,T;L^{2})}^{2} + \norm{\sigma_{k}}_{L^{2}(0,T;L^{2}(\Gamma))}^{2} \\
& \leq C \left ( 1 + \norm{\sigma_{\infty}}_{L^{2}(0,T;L^{2}(\Gamma))}^{2} \right ) 
\end{aligned}
\end{equation}
for some positive constant $C$ that does not depend on $\varphi_{k}$, $\sigma_{k}$ and $\mu_{k}$.  Here we see that for the quasi-static model \eqref{Intro:CHN:quasi} the assumption \eqref{assump:constantsrelations} for the constant $A$ is not used.  Invoking \eqref{boundary:Poincare:sigma} and \eqref{muL2estimate:Psi} give
\begin{align}\label{quasi:apriori:second}
\norm{\mu_{k}}_{L^{2}(0,T;L^{2})}^{2} + \norm{\sigma_{k}}_{L^{2}(0,T;L^{2})}^{2} \leq C \left (1 + \norm{\sigma_{\infty}}_{L^{2}(0,T;L^{2}(\Gamma))}^{2} \right ).
\end{align}
The above a priori estimates \eqref{quasi:apriori:first} and \eqref{quasi:apriori:second} imply that we can extend the solution $\{ \varphi_{k}, \mu_{k}, \sigma_{k}\}$ to the interval $[0,T]$, and thus $t_{k} = T$ for all $k \in \N$.  Together with \eqref{pdtvarphik:apriori} we obtain 
\begin{align*}
\{\varphi_{k}\}_{k \in \N} & \text{ bounded in } L^{\infty}(0,T;H^{1}) \cap H^{1}(0,T;(H^{1})^{*}), \\
\{\mu_{k}\}_{k \in \N } & \text{ bounded in } L^{2}(0,T;H^{1}), \\
 \{\sigma_{k}\}_{k \in \N} & \text{ bounded in } L^{2}(0,T;H^{1}) \cap L^{2}(0,T;L^{2}(\Gamma)).
\end{align*}
Uniform boundedness in the above spaces and the standard compactness arguments allow us to pass to the limit $k \to \infty$ in \eqref{quasi:Galerkin:system} to deduce the existence of a weak solution $(\varphi, \mu, \sigma)$ to \eqref{Intro:CHN:quasi} in the sense of Definition \ref{defn:weaksolutionquasi}.

\subsection{Further regularity}
Suppose that $\sigma_{\infty} \in L^{\infty}(0,T;L^{2}(\Gamma))$, then substituting $\xi = \sigma$ in \eqref{quasi:sigma:weak} leads to
\begin{align*}
\int_{\Gamma} K (\sigma_{\infty} - \sigma) \sigma \dHaus = \int_{\Omega} D(\varphi) (\nabla \sigma - \eta \nabla \varphi) \cdot \nabla \sigma + \lambda_{c} \abs{\sigma}^{2} h(\varphi) \dx.
\end{align*}
By the non-negativity of $\lambda_{c}$ and $h(\cdot)$, the boundedness of $D(\cdot)$, H\"{o}lder's inequality and Young's inequality, we obtain
\begin{align}\label{sigmaLinftyH1}
\frac{D_{0}}{2} \norm{\nabla \sigma}_{L^{2}}^{2} + \frac{K}{2} \norm{\sigma}_{L^{2}(\Gamma)}^{2} \leq \frac{K}{2} \norm{\sigma_{\infty}}_{L^{2}(\Gamma)}^{2} + \frac{D_{1} \eta^{2}}{2} \norm{\nabla \varphi}_{L^{2}}^{2}.
\end{align}
As $\varphi \in L^{\infty}(0,T;H^{1})$ and $\sigma_{\infty} \in L^{\infty}(0,T;L^{2}(\Gamma))$, taking the supremum of $t \in (0,T]$ in \eqref{sigmaLinftyH1} and by applying the Poincar\'{e} inequality \eqref{boundary:Poincare}, we find that
\begin{align*}
\sigma \in L^{\infty}(0,T;H^{1}).
\end{align*}

\subsection{Continuous dependence}\label{sec:quasi:ctsdep}
Suppose we have two weak solution triplets $\{ \varphi_{i}, \mu_{i}, \sigma_{i}\}_{i = 1,2}$ to \eqref{CHNutrient:quasi:weakform} satisfying the assumptions of Theorem \ref{thm:quasi:ctsdep}.  Let $\varphi$, $\mu$ and $\sigma$ denote the differences respectively.  Then 
\begin{align*}
\varphi & \in L^{\infty}(0,T;H^{1}) \cap H^{1}(0,T;(H^{1})^{*}), \\
\mu & \in L^{2}(0,T;H^{1}), \quad \sigma \in L^{\infty}(0,T;H^{1}),
\end{align*}
and
\begin{subequations}
\begin{align}
\inner{\pd_{t} \varphi}{\zeta} & =  \int_{\Omega} - \nabla \mu \cdot \nabla \zeta + \lambda_{p} (\sigma_{1} h(\varphi_{1}) - \sigma_{2} h(\varphi_{2})) \zeta - \lambda_{a} (h(\varphi_{1}) - h(\varphi_{2})) \zeta \dx, \label{quasi:difference:varphi} \\
\int_{\Omega} \mu \lambda  \dx & =  \int_{\Omega} A (\Psi'(\varphi_{1}) - \Psi'(\varphi_{2})) \lambda + B \nabla \varphi \cdot \nabla \lambda - \chi_{\varphi} \sigma \lambda \dx, \label{quasi:difference:mu} \\
0 & = \int_{\Omega} D (\nabla \sigma - \eta \nabla \varphi) \cdot \nabla \xi \dx + \int_{\Omega} \lambda_{c} (\sigma_{1} h(\varphi_{1}) - \sigma_{2} h(\varphi_{2})) \xi \dx \label{quasi:difference:sigma} \\
\notag & + \int_{\Gamma} K (\sigma - \Sigma_{\infty}) \xi \dHaus ,   
\end{align}
\end{subequations}
for all $\zeta, \lambda, \xi \in H^{1}$ and for a.e. $t \in (0,T)$.  Testing with $\zeta = \varphi$, $\xi = \sigma$, $\lambda = \varphi$, and $\lambda = \mu$ leads to
\begin{subequations}
\begin{align}
\frac{1}{2} \frac{\dd}{\dt} \norm{\varphi}_{L^{2}}^{2} & = \int_{\Omega} - \nabla \mu \cdot \nabla \varphi + \lambda_{p}(\sigma_{1}h(\varphi_{1}) - \sigma_{2} h(\varphi_{2})) \varphi - \lambda_{a} (h(\varphi_{1}) - h(\varphi_{2})) \varphi \dx , \label{quasi:uniq:apr1} \\
D \norm{\nabla \sigma}_{L^{2}}^{2} & =  - K \norm{\sigma}_{L^{2}(\Gamma)}^{2} + \int_{\Omega} D \eta \nabla \varphi \cdot \nabla \sigma - \lambda_{c} (\sigma_{1} h(\varphi_{1}) - \sigma_{2} h(\varphi_{2})) \sigma \dx \label{quasi:uniq:apr2} \\
\notag & + K \int_{\Gamma} \Sigma_{\infty} \sigma \dHaus, \\
\int_{\Omega} \mu \varphi \dx & = \int_{\Omega} A(\Psi'(\varphi_{1}) - \Psi'(\varphi_{2})) \varphi - \chi_{\varphi} \sigma \varphi \dx + B \norm{\nabla \varphi}_{L^{2}}^{2}, \label{quasi:uniq:apr3} \\
\norm{\mu}_{L^{2}}^{2} & = \int_{\Omega} A(\Psi'(\varphi_{1}) - \Psi'(\varphi_{2})) \mu + B \nabla \varphi \cdot \nabla \mu - \chi_{\varphi} \sigma \mu \dx. \label{quasi:uniq:apr4}  
\end{align}
\end{subequations}
We proceed similarly to Section \ref{sec:Uniqueness}.  Let $\mathcal{Y}$, $\mathcal{Z}$ denote two positive constants yet to be determined.  Upon adding the product of $B$ with \eqref{quasi:uniq:apr1}, the product of $\mathcal{Z}$ with \eqref{quasi:uniq:apr2}, the product of $\mathcal{Y}$ with \eqref{quasi:uniq:apr3}, and \eqref{quasi:uniq:apr4}, we obtain
\begin{equation}\label{uniq:quasi:energyidentity}
\begin{aligned}
& \frac{B}{2} \frac{\dd}{\dt} \norm{\varphi}_{L^{2}}^{2} + \norm{\mu}_{L^{2}}^{2}  + B \mathcal{Y} \norm{\nabla \varphi}_{L^{2}}^{2} \\
& \quad + \mathcal{Z} \left (D \norm{\nabla \sigma}_{L^{2}}^{2} + K \norm{\sigma}_{L^{2}(\Gamma)}^{2} + \lambda_{c} \int_{\Omega} h(\varphi_{2}) \abs{\sigma}^{2} \dx \right )  \\
& \quad =  \int_{\Omega} B \lambda_{p} (\sigma_{1} h(\varphi_{1}) - \sigma_{2} h(\varphi_{2})) \varphi - B \lambda_{a} (h(\varphi_{1}) - h(\varphi_{2})) \varphi \dx \\
& \quad +  \int_{\Omega} D \eta \mathcal{Z} \nabla \varphi \cdot \nabla \sigma - \lambda_{c} \mathcal{Z} \sigma_{1} (h(\varphi_{1}) - h(\varphi_{2})) \sigma \dx + \mathcal{Z}K \int_{\Gamma} \Sigma_{\infty} \sigma \dHaus \\
& \quad + \int_{\Omega} \mathcal{Y} \mu \varphi - A (\Psi'(\varphi_{1}) - \Psi'(\varphi_{2})) (\varphi \mathcal{Y} - \mu) + \chi_{\varphi} \sigma (\mathcal{Y} \varphi - \mu ) \dx,
\end{aligned}
\end{equation}
where we have used the splitting
\begin{align*}
(\sigma_{1} h(\varphi_{1}) - \sigma_{2} h(\varphi_{2})) \sigma = \abs{\sigma}^{2} h(\varphi_{2}) + \sigma_{1} (h(\varphi_{1}) - h(\varphi_{2})) \sigma.
\end{align*}
By H\"{o}lder's inequality, Young's inequality and the Sobolev embedding \eqref{Sobo:H1L4}, we find that the first line on the right-hand side of \eqref{uniq:quasi:energyidentity} can be estimated as
\begin{equation}\label{uniq:quasi:RHS:1}
\begin{aligned}
& \int_{\Omega} B \lambda_{p} (\sigma_{1} h(\varphi_{1}) - \sigma_{2} h(\varphi_{2})) \varphi - B \lambda_{a} (h(\varphi_{1}) - h(\varphi_{2})) \varphi \dx \\
& \quad \leq B \lambda_{p} \norm{\sigma_{1}}_{L^{2}} \mathrm{L}_{h} \norm{\varphi}_{L^{4}}^{2} + B \lambda_{p} h_{\infty} \norm{\sigma}_{L^{2}} \norm{\varphi}_{L^{2}} + B \lambda_{a} \mathrm{L}_{h} \norm{\varphi}_{L^{2}}^{2} \\
& \quad \leq \left ( B \lambda_{p} \mathrm{L}_{h} \norm{\sigma_{1}}_{L^{\infty}(0,T;L^{2})} C_{\mathrm{S}}^{2} + B \lambda_{a} \mathrm{L}_{h} + B^{2} \lambda_{p}^{2} h_{\infty}^{2} \right ) \norm{\varphi}_{L^{2}}^{2}  \\
& \quad + B \lambda_{p} \mathrm{L}_{h} \norm{\sigma_{1}}_{L^{\infty}(0,T;L^{2})} C_{\mathrm{S}}^{2} \norm{\nabla \varphi}_{L^{2}}^{2} + \frac{1}{4} \norm{\sigma}_{L^{2}}^{2}.
\end{aligned}
\end{equation}
Meanwhile, the second line on the right-hand side of \eqref{uniq:quasi:energyidentity} can be estimated as
\begin{equation}\label{uniq:quasi:RHS:2}
\begin{aligned}
& \int_{\Omega} D \eta \mathcal{Z} \nabla \varphi \cdot \nabla \sigma - \lambda_{c} \mathcal{Z} \sigma_{1} (h(\varphi_{1}) -  h(\varphi_{2})) \sigma \dx + \mathcal{Z} K \int_{\Gamma} \Sigma_{\infty} \sigma \dHaus \\
& \quad \leq \frac{\mathcal{Z} D}{2} \norm{\nabla \sigma}_{L^{2}}^{2} + \frac{\mathcal{Z} D \eta^{2}}{2} \norm{\nabla \varphi}_{L^{2}}^{2} + \mathcal{Z} \lambda_{c} \left (\norm{\sigma_{1}}_{L^{4}} \mathrm{L}_{h} \norm{\varphi}_{L^{4}} \norm{\sigma}_{L^{2}} \right ) \\
& \quad + \frac{\mathcal{Z} K}{2} \norm{\Sigma_{\infty}}_{L^{2}(\Gamma)}^{2} + \frac{\mathcal{Z} K}{2} \norm{\sigma}_{L^{2}(\Gamma)}^{2} \\
& \quad \leq \frac{\mathcal{Z} D}{2} \norm{\nabla \sigma}_{L^{2}}^{2} + \left ( \frac{\mathcal{Z} D \eta^{2}}{2} + \mathcal{Z}^{2} \lambda_{c}^{2} \norm{\sigma_{1}}_{L^{\infty}(0,T;H^{1})}^{2} C_{\mathrm{S}}^{4} \mathrm{L}_{h}^{2} \right ) \norm{\nabla \varphi}_{L^{2}}^{2} \\
& \quad + \mathcal{Z}^{2} \mathrm{L}_{h}^{2} \lambda_{c}^{2} \norm{\sigma_{1}}_{L^{\infty}(0,T;H^{1})}^{2} C_{\mathrm{S}}^{4}  \norm{\varphi}_{L^{2}}^{2} +  \frac{1}{4}  \norm{\sigma}_{L^{2}}^{2} \\
& \quad + \frac{\mathcal{Z} K}{2} \norm{\Sigma_{\infty}}_{L^{2}(\Gamma)}^{2} + \frac{\mathcal{Z} K}{2} \norm{\sigma}_{L^{2}(\Gamma)}^{2}.
\end{aligned} 
\end{equation}
Here we point out that we use the assumption $\sigma_{1} \in L^{\infty}(0,T;H^{1})$.  Similarly, the last term on the right-hand side of \eqref{uniq:quasi:energyidentity} can be estimated as
\begin{equation}\label{uniq:quasi:RHS:3}
\begin{aligned}
& \int_{\Omega} \mathcal{Y} \mu \varphi - A (\Psi'(\varphi_{1}) - \Psi'(\varphi_{2})) (\varphi \mathcal{Y} - \mu) + \chi_{\varphi} \sigma (\mathcal{Y} \varphi - \mu ) \dx \\
& \quad \leq \mathcal{Y}\norm{\mu}_{L^{2}} \norm{\varphi}_{L^{2}} + A \mathrm{L}_{\Psi'} \left (\mathcal{Y} \norm{\varphi}_{L^{2}}^{2} + \norm{\varphi}_{L^{2}} \norm{\mu}_{L^{2}} \right ) \\
& \quad + \chi_{\varphi} \mathcal{Y} \norm{\sigma}_{L^{2}} \norm{\varphi}_{L^{2}} + \chi_{\varphi} \norm{\sigma}_{L^{2}} \norm{\mu}_{L^{2}} \\
& \quad \leq \frac{3}{4} \norm{\mu}_{L^{2}}^{2} + C(A, \mathcal{Y}, \mathrm{L}_{\Psi'}, \chi_{\varphi}) \norm{\varphi}_{L^{2}}^{2} + 2\chi_{\varphi}^{2} \norm{\sigma}_{L^{2}}^{2}.
\end{aligned}
\end{equation}
Substituting \eqref{uniq:quasi:RHS:1}, \eqref{uniq:quasi:RHS:2} and \eqref{uniq:quasi:RHS:3} into \eqref{uniq:quasi:energyidentity} leads to
\begin{align*}
& \frac{\dd}{\dt} \frac{B}{2} \norm{\varphi}_{L^{2}}^{2} + \frac{1}{4} \norm{\mu}_{L^{2}}^{2} + \frac{D \mathcal{Z}}{2} \norm{\nabla \sigma}_{L^{2}}^{2} + \frac{\mathcal{Z} K}{2} \norm{\sigma}_{L^{2}(\Gamma)}^{2} - \mathcal{C}\norm{\varphi}_{L^{2}}^{2}  \\
& \quad +  \norm{\nabla \varphi}_{L^{2}}^{2} \left ( B \mathcal{Y} - \frac{\mathcal{Z} D \eta^{2}}{2} -  B \lambda_{p} \mathrm{L}_{h} \norm{\sigma_{1}}_{L^{\infty}(0,T;L^{2})} C_{\mathrm{S}}^{2} - \mathcal{Z}^{2} \lambda_{c}^{2} \norm{\sigma_{1}}_{L^{\infty}(0,T;H^{1})}^{2} \mathrm{L}_{h}^{2} C_{\mathrm{S}}^{4}  \right ) \\
& \quad -  \norm{\sigma}_{L^{2}}^{2} \left ( 2 \chi_{\varphi}^{2} + \frac{1}{2} \right ) \leq \frac{\mathcal{Z} K}{2} \norm{\Sigma_{\infty}}_{L^{2}(\Gamma)}^{2},
\end{align*}
where we have used the non-negativity of $h(\cdot)$ and $\lambda_{c}$ to neglect the term $\lambda_{c} h(\varphi_{2}) \abs{\sigma}^{2}$, and $\mathcal{C}$ is a positive constant depending on $A$, $B$, $\mathcal{Y}$, $\mathcal{Z}$, $\mathrm{L}_{h}$, $\mathrm{L}_{\Psi'}$, $\chi_{\varphi}$, $\lambda_{p}$, $\lambda_{a}$, $\lambda_{c}$, $C_{\mathrm{S}}$, $\norm{\sigma_{1}}_{L^{\infty}(0,T;H^{1})}$, and $h_{\infty}$.  By \eqref{boundary:Poincare:sigma}, we see that
\begin{align*}
& \frac{1}{2}\mathcal{Z} \left ( D \norm{\nabla \sigma}_{L^{2}}^{2} + K \norm{\sigma}_{L^{2}(\Gamma)}^{2} \right ) -  \left ( 2 \chi_{\varphi}^{2} + \frac{1}{2} \right )\norm{\sigma}_{L^{2}}^{2} \\
& \quad \geq \left (  \frac{1}{2}\mathcal{Z} \min (D, K)  - 2 C_{\mathrm{P}}^{2} \left ( 2 \chi_{\varphi}^{2} + \frac{1}{2} \right ) \right ) \left (\norm{\nabla \sigma}_{L^{2}}^{2} + \norm{\sigma}_{L^{2}(\Gamma)}^{2} \right ),
\end{align*}
and so in choosing 
\begin{align*}
\mathcal{Z} & > \frac{4 C_{\mathrm{P}}^{2}}{\min(D,K) } \left ( 2 \chi_{\varphi}^{2} + \frac{1}{2} \right ), \\
\mathcal{Y} & > \frac{1}{B} \left ( \frac{\mathcal{Z} D \eta^{2}}{2} +  B \lambda_{p} \mathrm{L}_{h} \norm{\sigma_{1}}_{L^{\infty}(0,T;L^{2})} C_{\mathrm{S}}^{2} + \mathcal{Z}^{2} \lambda_{c}^{2} \norm{\sigma_{1}}_{L^{\infty}(0,T;H^{1})}^{2} \mathrm{L}_{h}^{2} C_{\mathrm{S}}^{4}  \right ),
\end{align*}
we find that there exist constants $\mathcal{C}, \overline{c} > 0$ such that
\begin{align*}
\frac{\dd}{\dt} \norm{\varphi}_{L^{2}}^{2} - \mathcal{C} \norm{\varphi}_{L^{2}}^{2} + \norm{\mu}_{L^{2}}^{2} + \norm{\nabla \sigma}_{L^{2}}^{2} + \norm{\sigma}_{L^{2}(\Gamma)}^{2} + \norm{\nabla \varphi}_{L^{2}}^{2} \leq \overline{c} \norm{\Sigma_{\infty}}_{L^{2}(\Gamma)}^{2},
\end{align*}
and a similar argument to Section \ref{sec:Uniqueness} yields the desired result.

\section{Discussion}\label{sec:discussion}
We point out that we are not able to improve our class of admissible potentials to those with polynomial growth of order higher than $2$.  In particular, our well-posedness results do not cover the case of the classical quartic double-well potential.  This is due to the fact that in the derivation of \eqref{apriori:main} (specifically in \eqref{apriori:musourceterms}), we encounter a term of the form
\begin{align}
\norm{\mu}_{L^{2}} \left (1 + \norm{\sigma}_{L^{2}} \right ). 
\end{align}
If we use the equation for the chemical potential, this leads to a term of the form
\begin{align}\label{crosstermPsiprime}
\norm{\Psi'(\varphi)}_{L^{2}} \left (1 + \norm{\sigma}_{L^{2}} \right ) .
\end{align}
If $\Psi'$ has polynomial growth of order $q$, i.e., $\abs{\Psi'(t)} \leq R(1 + \abs{t}^{q})$ for some positive constant $R$ and for all $t \in \R$, then we have to control the product
\begin{align*}
\norm{\varphi}_{L^{2q}}^{q} \left (1 + \norm{\sigma}_{L^{2}} \right )
\end{align*}
with the $H^{1}$-norms of $\varphi$ and $\sigma$.  In the absence of any a priori bounds before \eqref{apriori:main}, we have to consider $q = 1$, that is, $\Psi$ has at most quadratic growth.  

This differs from the analysis of \cite{article:ColliGilardiHilhorst15, article:FrigeriGrasselliRocca15}, where the Lyapunov-type energy identity \eqref{Rocca:energy} automatically gives a first a priori estimate without the need to estimate the square of the mean of $\mu$, or equivalently an estimate on $\norm{\Psi'(\varphi)}_{L^{2}}$, which is present in our setting.  Instead of \eqref{assump:Psi}, we may also consider potentials that satisfy
\begin{align}\label{Psiprimesqrt}
\abs{\Psi'(s)} \leq k_{1} \sqrt{\Psi(s)} + k_{2},
\end{align}
for positive constants $k_{1}$ and $k_{2}$.  This yields
\begin{align*}
\norm{\Psi'(\varphi)}_{L^{2}} \leq k_{1} \norm{\sqrt{\Psi(\varphi)}}_{L^{2}} + k_{2} \abs{\Omega} \leq k_{1} \abs{\Omega}^{\frac{1}{2}} \norm{\Psi(\varphi)}_{L^{1}}^{\frac{1}{2}} + k_{2} \abs{\Omega}.
\end{align*}
This allows us to estimate \eqref{crosstermPsiprime} using $\norm{\Psi(\varphi)}_{L^{1}}$ instead of relying on any growth assumptions on $\Psi'$.  However, a scaling argument with $\Psi(s) \sim \abs{s}^{r}$ shows that \eqref{Psiprimesqrt} is satisfied only if $r \leq 2$.  Thus, we do not gain much if we replace $\eqref{assump:Psi}_{2}$ with \eqref{Psiprimesqrt}.  Moreover, \eqref{Psiprimesqrt} seems to be a more restrictive assumption than $\eqref{assump:Psi}_{2}$.

Lastly, we note that \cite[Lemma 2]{article:FrigeriGrasselliRocca15} provides an approximation procedure to potentials with polynomial growth of order $6$ by a sequence of regular potentials with quadratic growth.  This is accomplished by means of a Yosida regularisation of the derivative $\Psi'$.  However, we are not able to apply this idea to our analysis as the key priori estimate \eqref{apriori:main} is not uniform in the constant $R_{3}$, which acts as the regularisation parameter in the corresponding Yosida approximation. 

\section{Conclusion}
In this work, we provide well-posedness results for a system coupling a Cahn--Hilliard equation and a parabolic reaction-diffusion equation to model tumour growth with chemotaxis and active transport.  The existence of weak solutions is shown using a Galerkin procedure.  In contrast to some diffuse interface models for tumour growth studied in the literature, the model presented here admits an energy equality with non-dissipative right-hand sides and allows for some realistic source terms.  The presence of the source terms places some restrictions on the class of admissible potentials, namely potentials with quadratic growth.  In addition, we also study a system coupling a Cahn--Hilliard equation and an elliptic equation, which is realistic when bulk diffusion of the nutrient is fast and is often the case in applications.  We are also able to prove the continuous dependence on initial and boundary data for the chemical potential $\mu$ in $L^{2}(\Omega \times (0,T))$ and for the order parameter $\varphi$ in $L^{\infty}(0,T;L^{2}(\Omega))$.

\bibliographystyle{plain}
\bibliography{GLCHNNeumann}
\end{document}